\documentclass[11pt,a4paper]{article}
\usepackage[english]{babel}
\usepackage{amsthm}
\usepackage{hyperref}
\usepackage{mathtools}
\usepackage{amsfonts}
\usepackage{amssymb}
\usepackage{esint}
\usepackage{xcolor}
\usepackage[shortlabels]{enumitem}
\usepackage{bigints}
\usepackage{dsfont}
\usepackage{mathrsfs}
\usepackage{ragged2e}

\usepackage{array} %to make tables
\usepackage[table]{xcolor}
\arrayrulecolor{blue}

\usepackage{quiver}

\newtheorem{thm}{Theorem}[subsection]
\newtheorem*{main}{Quantum Ergodicity Theorem}
\newtheorem{QELS}{Theorem}

\newtheorem{propx}{Proposition}[subsection]

\newtheorem{lemma}{Lemma}[subsection]
\newtheorem{claim}{Claim}[subsection]

\theoremstyle{definition}
\newtheorem{defn}{Definition}[subsection]
\newtheorem{ex}{Example}[subsection]
\newtheorem*{pc}{Proof of Claim}

\theoremstyle{remark}
\newtheorem{rem}{Remark}[subsection]

\usepackage{xassoccnt}  %serve per i comandi qui sotto
\DeclareCoupledCountersGroup{theorems}  %numerazione progressiva di Thm, prop, etc...
\DeclareCoupledCounters[name=theorems]{thm,defn,lemma, rem, ex, propx, claim, cor}

\newcommand{\R}{\mathbb{R}}
\newcommand{\C}{\mathbb{C}}
\newcommand{\N}{\mathbb{N}}

\newcommand{\Hyp}{\mathbb{H}}

\newcommand{\eps}{\varepsilon}
\newcommand{\la}{\langle}
\newcommand{\ra}{\rangle}

\newcommand{\1}{\mathds{1}}

\newcommand{\cu}{\mathcal{C}}

\newcommand{\PSL}{\operatorname{PSL}}

\newcommand{\F}{\mathcal{F}}
\newcommand{\Pl}{\operatorname{Pl}}

\newcommand{\vol}{\operatorname{vol}}
\newcommand{\hyp}{\operatorname{hyp}}
\newcommand{\para}{\operatorname{par}}

\usepackage[OT1]{fontenc}
\DeclareFontFamily{OT1}{pzc}{}
\DeclareFontShape{OT1}{pzc}{m}{it}{<-> s * [1.35] pzcmi7t}{}
\DeclareMathAlphabet{\mathcal}{OT1}{pzc}{m}{it}
\newcommand {\ca}{\mathcal{a}}

\title{Spectral Convergence of Hyperbolic Surfaces and Quantum Ergodicity}
\author{Giacomo Gavelli}
\date{}

\begin{document}

\maketitle

\begin{abstract}
We extend the notion of Plancherel convergence, introduced in the study of compact quotients of locally compact groups, to finite-area hyperbolic surfaces and establish a quantum ergodicity theorem for Plancherel sequences. We show that the expectation values of integral operators whose kernels are uniformly bounded and have uniformly bounded propagation become asymptotically equidistributed, on average, for eigenvalues in any fixed compact interval in $(\frac{1}{4},\infty).$ Our result recovers the case of multiplication operators studied by Le Masson and Sahlsten, corresponding to the degenerate case of propagation bound zero and kernels supported on the diagonal. Moreover, our result allows the systole to shrink without a uniform lower bound, with the allowed degeneration constrained by Plancherel convergence. This identifies Plancherel convergence as a natural spectral framework for quantum ergodicity on degenerating hyperbolic surfaces.
\end{abstract}

{\small\tableofcontents}

\section*{Introduction}

Let $M$ be a closed Riemannian manifold with ergodic geodesic flow (a sufficient condition for the geodesic flow being ergodic is that the manifold has strictly negative curvature). Let $0=\lambda_0<\lambda_1\le\dots$ be the non-decreasing sequence of Laplace eigenvalues counted with multiplicities and $\{\psi_j\}_{j\in\N}$ be an associated orthonormal basis for $L^2(M)$. The Quantum Ergodicity (QE) Theorem, which is a culmination of results of Shnirelman \cite{sni1974erg}, Zelditch \cite{zelditch1987uniform} and Colin de Verdière \cite{colin1985ergodicite}, states that for any pseudodifferential operator $A$ of order zero with principal symbol $\sigma_A\in C^\infty(S^*M)$
\begin{equation*}\label{original QE}
    \lim_{\lambda\to\infty}\frac{1}{\Big|\{j\in\N\ \colon \lambda_j\le\lambda\}\Big|}\sum_{j\ \colon \lambda_j\le\lambda}\Bigg|\la A\psi_j,\psi_j\ra-\fint_{S^*M}\sigma_A dw\Bigg|^2=0,
\end{equation*}
where $dw$ is the Liouville measure on the unit tangent bundle and whenever $(X,\mu)$ is a measure space of finite measure we write $\fint _X f(x)d\mu(x):=\frac{1}{\mu (X)}\int_X f(x)d\mu(x)$ for any $f\in L^1(X)$. 
From a different perspective, one can ask whether eigenfunctions with eigenvalues in a fixed spectral window also equidistribute under suitable \textit{large-scale limits}. This setting is referred to as the \textit{level aspect}. A common large-scale limit to consider when studying quantum chaos on hyperbolic surfaces is the Benjamini--Schramm limit, which we now recall. If $X=\Gamma\setminus\Hyp$ is a finite-area hyperbolic surface and $p\in X$, the \textit{injectivity radius} of $X$ at $p$ is 
$$\operatorname{inj}_X(p)=\frac{1}{2}\inf\{d(z,\gamma. z)\ :\ \gamma\in\Gamma\smallsetminus\{\text{id}\}\},$$
where $z$ is any lift of $p$ to $\Hyp$. For $R>0$, the $R$-\textit{thin part} of $X$ is 
$$X_{<R}=\{ p\in X\ :\ \operatorname{inj}_X(p)<R\}.$$

We say that a sequence $(X_n)_{n\in\N}$ of finite-area hyperbolic surfaces is \textbf{Benjamini--Schramm convergent} to $\Hyp$ if for any $R>0$
$$\frac{\operatorname{vol}((X_n)_{<R})}{\operatorname{vol}(X_n)}\xrightarrow[]{n\to\infty}0.$$

We further recall that the \textit{systole} of a hyperbolic surface $X$ is the length of a shortest closed geodesic on $X$. In \cite{le2017quantum}, Le Masson and Sahlsten showed that for a fixed compact interval $I\subset(1/4,\infty)$, a uniformly discrete Benjamini--Schramm sequence $(X_n)_{n\in\N}$ of closed hyperbolic surfaces with a uniform spectral gap and a uniformly bounded sequence of measurable functions $(a_n)_{n\in\N}$ the following Quantum Ergodicity result holds:
\begin{equation*}\label{Le-Sahl QE}
    \lim_{n\to\infty} \frac{1}{N(X_n,I)}\sum_{j\ \colon\lambda_n\in I}\Big|\la a_n\psi_j^{(n)},\psi_j^{(n)}\ra -\fint_{X_n} a_nd\vol_{X_n}\Big|^2=0,
\end{equation*}
where $N(X_n,I)$ is the number of eigenvalues of $X_n$ in $I$ counted with multiplicity. This result was generalized in \cite{abert2022eigenfunctions} to all rank 1 locally symmetric spaces, allowing operators by convolution with invariant kernels rather than multiplication operators. An extension to higher rank locally symmetric spaces is given in \cite{brumley2026quantum}. In the large eigenvalue regime, Zelditch proved that a Quantum Ergodicity Theorem holds for noncompact hyperbolic surfaces with finite area\cite{zelditch1991mean}. Le Masson and Sahlsten then provided the level aspect counterpart \cite{le2024quantum}. Extending the latter result is the main focus of this paper, so we recall it here. 

A non-compact hyperbolic surface $X$ of finite area has both discrete and continuous spectrum for the Laplacian. If $0=\lambda_0<\lambda_1\le\dots$ is the nondecreasing collection of discrete eigenvalues, we can fix an orthonormal system $\{\psi_j\}$ of $L^2$-eigenfunctions of the Laplacian. The continuous spectrum is the whole interval $[\frac{1}{4},\infty)$. Let $\mathcal{C}(X):=\{\ca_1,\dots,\ca_q\}$ be the set of cusps of $X$. For every $s\in\C$ and $k\in\{1,\dots,q\}$ there is a certain non-$L^2$ eigenfunction of the Laplacian $E_k(\cdot,s)$, called Eisenstein series (we refer to \cite{iwaniec2021spectral} for a thorough treatment or to sections \ref{cusps}, \ref{Eisenstein series} for some background). For a fixed interval $I\subset(\frac{1}{4},\infty)$ we define 
$$M(X,I):=\frac{1}{4\pi}\int_{\tau^{-1}(I)} \frac{-\varphi'_X}{\varphi_X}\Big(\frac{1}{2}+is\Big)ds,$$
where $\tau(s):=\frac{1}{4}+s^2$ and $\varphi_X$ is the scattering determinant of $X$ (see section \ref{Eisenstein series} or \cite{iwaniec2021spectral}). As for the case of compact surfaces, we denote by $N(X,I)$ the number of discrete eigenvalues in the interval $I$ counted with multiplicities. The quantity $N(X,I)+M(X,I)$ measures the contribution of the discrete and continuous spectra of $X$ in the interval $I$. The \textit{Quantum mean absolute deviation} of $X$ over $I$ with respect to a function $a\in L^\infty(X)$ is 
\begin{align*}
    &\operatorname{Dev}_{X,I}(a):=\frac{1}{N(X,I)+M(X,I)}\Bigg(\sum_{j\ \colon \lambda_j\in I}\big|\la a\psi_j,\psi_j\ra-\la a\ra\big|+ \\
    &\frac{1}{4\pi}\int_{\tau^{-1}(I)}\Big|\sum_{k=1}^q \la a E_k(\cdot,\frac{1}{2}+is),E_k(\cdot,\frac{1}{2}+is)\ra +\frac{\varphi_X'}{\varphi_X}\bigg(\frac{1}{2}+is\bigg)\la a \ra\Big|ds\Bigg),
\end{align*}
where $\la a\ra=\fint _X a(x)d\vol(x)$. Le Masson and Sahlsten prove the following result. 

\begin{QELS}[\cite{le2024quantum}, Theorem 1.2]\label{thm to extend}
    Let  $I\subset(\frac{1}{4},\infty)$ be a fixed compact interval and $(X_n)_{n\in\N}$ be a sequence of finite-area hyperbolic surfaces such that: 
\begin{enumerate}[1)]
    \item $X_n$ is Benjamini--Schramm convergent to $\Hyp$ as $n\to\infty$;
    \item there is a uniform lower bound on the systole of $X_n$;
    \item there is a uniform spectral gap for the Laplacian on $X_n$;
    \item the number of cusps $q_n$ of $X_n$ is such that $\frac{q_n^2}{\vol (X_n)}\xrightarrow[]{n\to\infty}0$.
\end{enumerate}
Then for any sequence of uniformly bounded functions $a_n\in L^\infty(X_n)$ with uniformly compact support \footnote{For $\upsilon\ge1$, any hyperbolic surface can be decomposed into a disjoint union of cuspidal zones and a compact part $X(\upsilon)$ with boundary consisting of horocycles (see section \ref{cusps}). If $(X_n)_{n\in\N}$ is a sequence of hyperbolic surfaces we say that functions $a_n\in L^\infty (X_n)$ have \textit{uniformly compact support} if there exists $\upsilon>1$ such that $\operatorname{supp} a_n\subseteq X_n(\upsilon)$ for every $n\in\N$.} we have 
$$\operatorname{Dev}_{X_n,I}(a_n)\xrightarrow[]{n\to\infty}0.$$
\end{QELS}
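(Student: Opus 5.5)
Following the strategy of \cite{le2017quantum}, adapted to the finite-area setting as in \cite{le2024quantum}, I would reduce the statement to two estimates: an \emph{averaging} identity coming from wave propagation, and a Hilbert--Schmidt bound for a ball-averaging operator. These are controlled respectively by the spectral gap and by the Benjamini--Schramm condition.

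\textbf{Step 1: normalisation and invariance under averaging.} Subtracting $\la a_n\ra$, I assume $\la a_n\ra=0$. Let $P_t$ be the wave-propagator family used by Le Masson--Sahlsten, built via Selberg transform from a radial kernel $k_t$ on $\Hyp$. Then $\la a_n\psi_j,\psi_j\ra=\la \mathcal{A}_T a_n\,\psi_j,\psi_j\ra/h_T(r_j)$. Here $\mathcal{A}_T=\frac1T\int_0^T P_t\,\cdot\,P_t\,dt$ is realised as an integral operator with kernel $K_T(x,y)=\sum_{\gamma}\int a_n(w)\,k_T(x,w,\gamma y)\,dw$, and $h_T(r)$ is bounded below uniformly for $r$ with $\tau(r)\in I$. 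The same identity holds for the Eisenstein series $E_k(\cdot,\frac12+is)$, interpreted through the spectral decomposition and the Maass--Selberg relations. The correction term $\frac{\varphi'_X}{\varphi_X}\la a\ra$ is exactly what appears when one takes traces of the continuous part.

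\textbf{Step 2: Cauchy--Schwarz into a Hilbert--Schmidt norm.} Using a smooth positive cutoff $\chi$ near $I$, the deviation is bounded by
\begin{equation*}
\frac{C}{N(X_n,I)+M(X_n,I)}\Big(\sum_j \chi(\lambda_j)|\la \mathcal{A}_T a_n\psi_j,\psi_j\ra|^2+\int\chi\,|\cdots|^2\Big)^{1/2}\big(N+M\big)^{1/2}.
\end{equation*}
The inner quantity is at most $\|\chi(\Delta)\mathcal{A}_T a_n\|_{HS}^2\le \int_{X_n}\int_{X_n}|K_T(x,y)|^2\,dx\,dy$. This requires a Weyl-type lower bound $N+M\gtrsim \vol(X_n)$, which holds for Benjamini--Schramm sequences via the Selberg trace formula; condition 4) controls the parabolic and cusp contributions, of size $q_n\log$ and $q_n^2$.

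\textbf{Step 3: estimating the kernel.} I split $X_n$ into the $R$-thick part and the remainder. On the thick part, for $R\gg T$, the sum over $\gamma$ has a single term. The spectral gap then gives exponential mixing of $P_t$ on $\Hyp$ integrated against a mean-zero function, and hence a bound of order $\|a_n\|_\infty^2\vol(X_n)/T$. Points in the thin part within the compact core $X_n(\upsilon)$ have a number of lattice terms controlled by the systole bound 2), of size roughly $e^{CT}$. Since their volume fraction tends to $0$ by 1), this contribution is $o(\vol(X_n))$ for fixed $T$. The uniform compact support of $a_n$ keeps the kernel away from deep cusp regions, where the parabolic orbit counts blow up. Dividing by $\vol(X_n)$, letting $n\to\infty$ and then $T\to\infty$, gives $\operatorname{Dev}\to0$.

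\textbf{Main obstacle.} The hardest step is the continuous spectrum. One must justify the Hilbert--Schmidt inequality including the Eisenstein integral, and bound the cusp contributions to $\int|K_T|^2$ uniformly, which is where $q_n^2/\vol(X_n)\to0$ enters. I would first prove an explicit pre-trace-formula decomposition of $\|\chi(\Delta)\mathcal{A}_T a_n\|_{HS}^2$ into discrete plus $\frac1{4\pi}\int$ Eisenstein terms, and then bound the parabolic orbit sums on $X_n(\upsilon)$ by $O(q_n)$ per cusp.
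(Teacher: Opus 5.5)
The gap is in Step~2, on the continuous spectrum. It is the same kind of step the paper flags as a gap in \cite[Prop.~3.3]{le2024quantum} (see the remark before Lemma~\ref{lem: HS bound}).

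\textbf{The continuous-spectrum inequality in Step~2 fails.} You bound $\int\chi\,\bigl|\sum_k\la\mathcal{A}_TE_k(s),E_k(s)\ra\bigr|^2ds$ by $\|\chi(\Delta)\mathcal{A}_T\|_{HS}^2$. This treats diagonal coefficients of the non-normalizable Eisenstein series as if they were coefficients in an orthonormal basis. The spectral theorem applied to the functions $K_T(x,\cdot)$ only gives $\frac{1}{4\pi}\int\sum_k\|\mathcal{A}_TE_k(s)\|^2ds\le\|\mathcal{A}_T\|_{HS}^2$.

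To reach the diagonal coefficient you must write $\la\mathcal{A}_TE_k,E_k\ra=\la\mathcal{A}_TE_k,\1_{X(\upsilon')}E_k\ra$, with $X(\upsilon')$ containing the support of the kernel. That costs a factor $\|\1_{X(\upsilon')}E_k(s)\|$. By the Maa\ss--Selberg relations, $\sum_k\|\1_{X(\upsilon')}E_k(s)\|^2$ equals $2q\log\upsilon'-\frac{\varphi_X'}{\varphi_X}(\frac12+is)$ up to lower-order terms. This is essentially the density of the continuous spectrum: it is not bounded in $s$, and its average over $\tau^{-1}(I)$ is of order $M(X,I)$.

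So your inequality is false in general, by a factor that can be comparable to $\vol(X_n)$. Take $a=\1_{X(\upsilon)}$, for which $\|\mathcal{A}_T\|_{HS}^2\lesssim_T\vol(X)$ under the systole bound. Your inequality would then force $\int_{\tau^{-1}(I)}\bigl(\frac{\varphi_X'}{\varphi_X}(\frac12+is)\bigr)^2ds=O_T(\vol(X))$, up to cusp-count terms. But this integral is at least $(4\pi M(X,I))^2/|\tau^{-1}(I)|$, so the bound fails whenever $M(X_n,I)$ is a positive proportion of $\vol(X_n)$.

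\textbf{The repair, as in Lemma~\ref{lem: HS bound}.} Do not square the continuous part; keep the $L^1$ structure of $\operatorname{Dev}$. Apply Cauchy--Schwarz in $L^2(\tau^{-1}(I)\times\mathcal{C}(X))$ to get $\int_{\tau^{-1}(I)}\sum_p|\la FE_p,E_p\ra|\,ds\le\bigl(\int_{\tau^{-1}(I)}\sum_p\|\1_{X(\upsilon')}E_p\|^2ds\bigr)^{1/2}\bigl(\int\sum_p\|FE_p\|^2ds\bigr)^{1/2}$. Then bound the first factor via Maa\ss--Selberg by $\bigl(q\log\upsilon'+q^2e^{-4\pi\upsilon'}+M(X,I)\bigr)^{1/2}$.

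This is where $q_n^2/\vol(X_n)\to0$ is really used. It is not needed for the Weyl law, nor for $\int|K_T|^2$, where the parabolic terms on the compact core cost $O(1)$ per cusp, hence $O(q_n)$ in total. It is also why the theorem concerns the mean absolute deviation rather than a variance.

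\textbf{The normalization in Step~1 does not work with cusps.} The function $a_n-\la a_n\ra$ is no longer compactly supported, and $\la(a_n-\la a_n\ra)E_k,E_k\ra$ diverges. Identifying $\frac{\varphi_X'}{\varphi_X}\la a\ra$ with ``a trace of the continuous part'' is only a heuristic for a regularized $\la E_k,E_k\ra$.

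The admissible reduction subtracts $\la a_n\ra\chi_\upsilon$ with $\chi_\upsilon=\frac{\vol(X)}{\vol(X(\upsilon))}\1_{X(\upsilon)}$, as in Lemma~\ref{averaging discrete and continuous contribution}. It then requires a separate proof that $\operatorname{Dev}_{X_n,I}(\chi_\upsilon)\to0$. That proof again rests on Maa\ss--Selberg: the $2q\log\upsilon$ and $-\varphi_X'/\varphi_X$ terms must cancel against $\la\chi_\upsilon\ra=1$ up to $O(q_n\log\upsilon)$ errors. It also needs control of the mass, high in the cusps, of the cusp forms with $\lambda_j\in I$. The paper imports this input from \cite{le2024quantum}, so a proof of the statement itself must supply it.

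The rest of your outline matches the structure the paper adapts in Propositions~\ref{prop: spectral bound lemma}, \ref{prop: geometric bound} and~\ref{prop: application of Nevo}. That includes the wave-propagation identity with $h_T\ge C_I$, and the thick/thin split using the spectral-gap ergodic estimate and the systole bound.
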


We aim to extend this result in two directions:

1) In the spirit of \cite{abert2022eigenfunctions}, we show that a Quantum Ergodicity result holds when one considers invariant integral operators with well behaved kernels instead of multiplication operators. That is, we show that if $(X_n)_{n\in\N}$ is a sequence satisfying conditions $1)-4)$ in Theorem \ref{thm to extend}, then for any sequence of uniformly bounded kernels $k_n:X_n\times X_n\to\C$ with uniformly relative compact support (see Def. \ref{uniform relative compact support}) and uniformly finite propagation
$$\operatorname{Dev}_{X_n,I}(T_n)\xrightarrow[]{n\to\infty}0,$$
where $T_n$ denotes the operator by convolution with the kernel $k_n$. In fact, we show that this result holds with conditions less restrictive than $1)-4)$. This is the second extension.

2) The proof of Theorem \ref{thm to extend} relies, among other things, on the fact that if $(X_n)_{n\in\N}$ is a sequence of finite-area hyperbolic surfaces satisfying $1)+2)$, then 
\begin{equation}\label{plancherel measure of interval}
\lim_{n\to\infty}\frac{N(X_n,I)+M(X_n,I)}{\vol(X_n)}=\frac{1}{4\pi}\int_\R \1_I\big(\frac{1}{4}+s^2\Big)\tanh(\pi s)sds,
\end{equation}
where we call the RHS of the above expression the \textit{Plancherel measure of the interval} $I$ and we denote it by $\mu_{Pl}(I)$. We call a sequence satisfying (a slight variation of) the above asymptotic a \textbf{Plancherel sequence} (see section \ref{spectral convergence}) and we show that the Plancherel condition already suffices to prove the Quantum Ergodicity Theorem.

The relation between Plancherel and Benjamini--Schramm sequences in the case of compact surfaces has been extensively studied (see e.g. \cite{abert2017growth}, \cite{deitmar2019benjamini}, \cite{gavelli2025benjamini}): if $(X_n)_{n\in\N}$ is a Plancherel sequence then $X_n$ is Benjamini--Schramm convergent to $\Hyp$ and if $(X_n)_{n\in\N}$ is a Benjamini--Schramm sequence with a uniform lower bound on the systole, then it is a Plancherel sequence. Moreover, all these implications are strict. We show that the same relations hold for surfaces with cusps as well (Proposition \ref{PL vs BS}). 

The main result of the paper is the following Quantum Ergodicity Theorem. 

\begin{main}
    Let $I\subset (1/4,\infty)$ be a compact interval and $(X_n)_{n\in\N}$ a sequence of finite area hyperbolic surfaces. Denote by $q_n$ the number of cusps of $X_n$. Assume that 
    \begin{enumerate}
        \item $(X_n)_{n\in\N}$ is a Plancherel sequence;
        \item there is a uniform spectral gap for the Laplacian on $X_n$;
        \item $\frac{q_n^2}{\vol (X_n)}\xrightarrow[]{n\to\infty}0$.
    \end{enumerate}
        
     Then, for any  sequence of uniformly bounded  kernels $(k_n)_{n\in\N}$ with uniformly relative compact support (see Def. \ref{uniform relative compact support}) and uniformly finite propagation,

    $$\operatorname{Dev}_{X_n,I}(T_n)\xrightarrow[]{n\to\infty}0,$$
    where $T_n$ denotes the operator by convolution with the kernel $k_n$.
\end{main}

It is worth noticing that the authors of \cite{le2024quantum} already state that their techniques allow the systole to shrink to $0$, as long as that does not happen ``too fast" compared to the volume growth. The main technical point of this paper is to make this precise saying that the systole can shrink to $0$ as fast as it allows the sequence to be Plancherel (see Theorem \ref{geometric plancherel thm}). The conceptual idea is that quantum ergodicity is fundamentally a spectral phenomenon, so that Plancherel convergence seems to be the natural hypothesis under which it should hold. Moreover, we recover the case of multiplication operators studied in \cite{le2024quantum} in the degenerate case in which the propagation bound is $0$ and the kernels we consider are singular and supported on the diagonal in $X_n\times X_n$.

We also mention a recent result of Anantharaman and Saha \cite{anantharaman2026quantum}, who prove quantum ergodicity for pseudolocal operators on hyperbolic surfaces in the level aspect. Their theorem applies to sequences of closed hyperbolic surfaces which are Benjamini--Schramm convergent to the hyperbolic plane with a uniform lower bound on the systole and a uniform spectral gap, and allows operators with distributional kernels of uniformly finite propagation. In contrast, our theorem applies to finite-volume surfaces with cusps and replaces the Benjamini--Schramm and uniform discreteness assumptions by Plancherel convergence. As is customary in the finite-volume setting, our conclusion is formulated in terms of the quantum deviation rather than the variance. At the level of observables, our theorem is stated for measurable kernels satisfying uniform propagation conditions, and does not cover the full class of distributional kernels considered in \cite{anantharaman2026quantum}. Nevertheless, our strategy consists in disintegrating finite-propagation kernels into distributional components supported on $\{(x,y):d(x,y)=r\}$, and establishing the required quantum ergodicity estimate for these distributional kernels (cf. Proposition \ref{prop: first big reduction}).

We conclude with a comment on the restriction to compact intervals in $(1/4,\infty)$. A key property of Plancherel sequences is that, for any compact interval $I\subset[0,\infty)$, the spectral mass satisfies (cf. Proposition \ref{prop: spectral convergence})
$$\frac{N(X_n,I)+M(X_n,I)}{\operatorname{vol}(X_n)}
\longrightarrow \mu_{Pl}(I).$$
If $I\subset(1/4,\infty)$, then $\mu_{Pl}(I)>0$, and hence $N(X_n,I)+M(X_n,I)$ is asymptotically comparable to $\operatorname{vol}(X_n)$. We can therefore pass between estimates normalized by spectral mass and those normalized by volume. In contrast, if $I\subset[0,1/4]$, then $\mu_{Pl}(I)=0$, so the spectral mass is asimptotically $o(\operatorname{vol}(X_n))$. Consequently, volume-normalized estimates do not directly provide the spectral estimates required for $\operatorname{Dev}_{X_n,I}$, and in particular the quantitative bound of Proposition \ref{prop: geometric bound} is insufficient to control the quantum mean absolute deviation in this regime.\newline

\textbf{Structure of the paper: } In section \ref{preliminaries} we introduce the notation we use throughout the article and recall the necessary results about the harmonic analysis on hyperbolic surfaces. In section \ref{spectral convergence} we develop the notion of Plancherel convergence, provide a geometric interpretation by means of the Selberg Trace Formula, compare it with Benjamini--Schramm convergence and prove that Plancherel sequences satisfy the spectral convergence portrayed in (\ref{plancherel measure of interval}). Section \ref{quantum ergodicity and smooth kernels} is dedicated to the proof of the Quantum Ergodicity Theorem.\newline

\textbf{Acknowledgments.} I am deeply grateful to my advisor, Anton Deitmar, for introducing me to Plancherel convergence and the applications of the trace formula, for encouraging me to pursue this project, and for his constant support throughout its development. I also thank Carsten Peterson for teaching me much about quantum ergodicity at various stages of this work, and for his patience in listening to and discussing my ideas. Finally, I am particularly grateful to Tuomas Sahlsten for fruitful discussions concerning an issue in an earlier paper and for his suggestions on how to resolve it, as well as for his encouragement in the final stages of this project.

This work was carried out during my time as a PhD candidate at the University of Tübingen.

\section{Preliminaries}\label{preliminaries}

We fix some notation that will be used throughout the paper:
\begin{itemize}
    \item $G=\PSL_2(\R)$
    \item $K=\operatorname{PSO}(2)$
    \item $\Hyp=G/K$
    %\item $o=eK\in\Hyp$ is the projection of the identity element. In the upper-half plane model it corresponds to the point $i$.
    % \item If $\Gamma\le G$ is a torsion free lattice, we denote by $X=\Gamma\setminus\Hyp$ the associated hyperbolic surface. 
    % \item $\cu(X)=\{\ca_1,\dots,\ca_q\}$ is the set of cusps of $X$.
    % \item For $s\in\R$ and $p\in\{1,\dots, q\}$, 
    % $$E_p(s):= E_{\ca_p}\left(\cdot,\frac{1}{2}+is\right).$$
    % \item $\varphi_X:\C\to \C$ is the scattering determinant.
    % \item $\{\lambda_j\}_{j=0}^N$ are Laplace eigenvalues composing the discrete $L^2$-spectrum.
    % \item $\{\psi_j\}_{j=0}^N$ are Laplace eigenfunctions normalized to be of $L^2$-norm 1. 
    \item $\tau:[0,\infty)\to[1/4,\infty)$, $\tau(s):=\frac{1}{4}+s^2$.
    % \item $\phi_s:\Hyp\to\C$ is the spherical function with spectral parameter $s\in[0,\infty)$. 
    % \item $I\subset(1/4,\infty)$ is an interval.
\end{itemize}

\subsection{Cusps}\label{cusps}

Let $\Gamma\le G$ be a torsion free lattice and $X=\Gamma\setminus\Hyp$ be the associated hyperbolic surface. We say that $x\in\partial\Hyp$ is a \textit{parabolic fixed point} if there exists a parabolic element $\gamma\in\Gamma$ such that $\gamma(x)=x$. A \textbf{cusp} of $X$ is the $\Gamma$-orbit of a parabolic fixed point.
We denote by $\mathcal{C}(X)=\{\ca_1,\dots,\ca_q\}$ the set of cusps of $X$. For $p=1,\dots ,q$, let $x_p$ be a parabolic fixed point such that $\ca_p=\Gamma x_p$. The \textit{stabilizer} of $x_p$ is an infinite cyclic group generated by a parabolic element, i.e. there exists $\gamma_p\in\Gamma$ such that 
$$\Gamma_p=\{\gamma\in\Gamma\ :\ \gamma. x_p=x_p\}=\la\gamma_p\ra.$$

Henceforth, we assume to have fixed a parabolic fixed point for each cusp. For each $k=1,\dots, q$
there exists $\sigma_p\in G$ such that 
$$\sigma_p (\infty)=x_p,\qquad \sigma_p^{-1}\gamma_p\sigma_p=\begin{pmatrix}
    1 & 1 \\ 0 & 1
\end{pmatrix}.$$
Given $\upsilon>0$, consider the half-plane 
$$\cu(\upsilon)=\{x+iy\ : y\ge \upsilon\}$$
We denote
$$X_p(\upsilon):=\Gamma_p\setminus\sigma_p(\cu(\upsilon))$$
and we call $X_p(\upsilon)$ a \textit{cuspidal zone}. Note that $X_p(\upsilon)$ does not depend on the choice of the parabolic fixed point $x_p$. 
% In fact, if $\tilde{p}_j$ is in the $\Gamma$-orbit of $p_j$, then $\tilde p_j=\tilde\gamma p_j$ for some $\tilde\gamma\in\Gamma$ and we can take
% $$\tilde\sigma_j=\tilde\gamma\sigma_j,\qquad \tilde\gamma_j=\tilde\gamma\gamma_j\tilde\gamma^{-1},$$
% so that 
% $$\Gamma\setminus\sigma_j(\cu(t))=\Gamma\setminus\tilde\sigma_j(\cu(t))=X_j(t).$$
For $\upsilon\ge1$ the cuspidal zones are disjoint (cf. \cite[Thm. 4.4.6.(i)]{buser2010geometry}), so that we can decompose $X$ as 
\begin{equation}\label{surface decomposition}
    X=X(\upsilon)\cup\bigcup_{p=1}^q X_p(\upsilon),
\end{equation}
where $X(\upsilon)$ is compact with boundary consisting of disjoint horocycles 
$$c_p(\upsilon)=\Gamma\setminus\sigma_p (c(\upsilon)),\qquad \text{with }\quad c(\upsilon)=\{x+i\upsilon\ :\ x\in\R\}$$
for $p=1,\dots,q$. 

Now we discuss how functions $f$ on $X$ admit a Fourier series decomposition at each cusp. %Notice that each cusp cut at height $t$ is isometric to 
% $$\mathcal{C}_t=\Gamma_\infty\setminus\{ x+iy\in\Hyp\ :\ 0\le x\le1,\ y\ge t\},$$
% where $\Gamma_\infty=\la z\mapsto z+1\ra$. 
%Also, 
%$$\text{Vol}(\mathcal{C}(Y))=\frac{1}{Y}.$$

Let $p=1,\dots,q$. Given $f\in L^2(X)$ we denote $f^p(z):=f(\sigma_p(z))$. Then $f^p(z+1)=f^p(z)$. Since $f^p$ is 1-periodic, it admits a Fourier series decomposition
$$f^p(z)=f^p(x+iy)=\sum_{n=0}^\infty f^p_n(y)e^{inx},$$
where the equality is to be intended in the $L^2$ sense. If $f\in C^\infty(X)$, the Fourier series converges locally uniformly with all derivatives to $f^p$. This will be the case of Eisenstein series in Section \ref{Eisenstein series}.  

For future reference we provide here a formula for the number of cusps of a normal covering of $X$. 

\begin{lemma}\label{lem: n of cusps formula}
       Let $\{\ca_1,\dots,\ca_q\}$ be the set of cusps of $X$. for every $p\in\{1,\dots,q\}$ let $x_p$ be a parabolic fixed point such that $\ca_p=\Gamma x_p$ and let $\Gamma_p$ be the stabilizer in $\Gamma$ of $x_p$. Let $\Gamma'\trianglelefteq\Gamma$ be a normal subgroup such that $[\Gamma:\Gamma']<\infty$. Denote $X':=\Gamma'\setminus\Hyp$. Then the number of cusps of $X'$ is 
       $$q'=[\Gamma:\Gamma']\sum_{p=1}^q\frac{1}{[\Gamma_p:\Gamma_p\cap\Gamma']}.$$
\end{lemma}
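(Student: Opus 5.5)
The plan is to count the cusps of $X'$ orbit by orbit. The cusps of $X'$ are the $\Gamma'$-orbits of parabolic fixed points of $\Gamma'$, so first I would check that these are exactly the parabolic fixed points of $\Gamma$. If $\gamma\in\Gamma$ is parabolic, then some power $\gamma^m$ with $1\le m\le[\Gamma:\Gamma']$ lies in $\Gamma'$, by pigeonhole on cosets. That power is again parabolic and has the same fixed point, so the fixed point of $\gamma$ is also a parabolic fixed point of $\Gamma'$. The converse is trivial. Hence the set of parabolic fixed points is the disjoint union $\bigsqcup_{p=1}^q\Gamma x_p$, and
$$q'=\sum_{p=1}^q\#\big(\Gamma'\setminus\Gamma x_p\big).$$

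Next, fix $p$ and identify $\Gamma x_p$ with $\Gamma/\Gamma_p$ via $\gamma\mapsto\gamma x_p$. The $\Gamma'$-orbits on $\Gamma x_p$ then correspond to the double cosets $\Gamma'\setminus\Gamma/\Gamma_p$. Because $\Gamma'$ is normal, $\Gamma'\gamma\Gamma_p=\gamma\Gamma'\Gamma_p$. So these double cosets are in bijection with the cosets of the subgroup $\Gamma'\Gamma_p$ in $\Gamma$: left multiplication by $\gamma$ permutes them, and they are exactly the images of $\Gamma'\Gamma_p$. Their number is therefore
$$[\Gamma:\Gamma'\Gamma_p]=\frac{[\Gamma:\Gamma']}{[\Gamma'\Gamma_p:\Gamma']}.$$
By the second isomorphism theorem, $\Gamma'\Gamma_p/\Gamma'\cong\Gamma_p/(\Gamma_p\cap\Gamma')$, so $[\Gamma'\Gamma_p:\Gamma']=[\Gamma_p:\Gamma_p\cap\Gamma']$. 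This index is finite because $\Gamma_p\cap\Gamma'$ contains a nontrivial power of $\gamma_p$.

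Summing over $p$ gives the formula in the statement. The only point requiring care is the first step: one must note that $\Gamma'$ is a lattice with the same parabolic fixed points as $\Gamma$, so that no new cusps arise and none are lost. The pigeonhole argument above handles this. Normality is used essentially in the double-coset step; without it, the orbit sizes would vary.
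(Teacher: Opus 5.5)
Your proof is correct and follows the same route as the paper: identify the cusps of $X'$ lying over $\ca_p$ with the double cosets $\Gamma'\setminus\Gamma/\Gamma_p$, use normality to reduce these to the cosets $\Gamma/(\Gamma_p\Gamma')$, and compute $[\Gamma:\Gamma_p\Gamma']$ via multiplicativity of the index and the second isomorphism theorem. Your pigeonhole argument that $\Gamma'$ has exactly the same parabolic fixed points as $\Gamma$ is a welcome addition, since the paper uses this fact implicitly without justifying it.
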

    \begin{pc}
        fix $p\in\{1,\dots,m\}$. The set of cusps in $X'$ covering $\ca_p$ is in bijection with $\Gamma'\setminus \Gamma /\Gamma_p$. Since $\Gamma'$ is normal in $\Gamma$, we have $\Gamma'\setminus\Gamma /\Gamma_p\cong \Gamma /(\Gamma_p\Gamma')$, so that the number of cusps in $X'$ covering $\ca_p$ is
        $$\Big|\Gamma/(\Gamma_p\Gamma')\Big|=[\Gamma:\Gamma_p\Gamma'].$$
        By multiplicativity of the index $[\Gamma:\Gamma']=[\Gamma:\Gamma_p\Gamma']\cdot[\Gamma_p\Gamma':\Gamma']$, so that 
        $$[\Gamma:\Gamma_p\Gamma']=\frac{[\Gamma:\Gamma']}{[\Gamma_p\Gamma':\Gamma']}.$$
        Now, 
        $$[\Gamma_p\Gamma':\Gamma']=\Big|\Gamma_p\Gamma_n/\Gamma_n\Big|=\Big|\Gamma_p /(\Gamma_p\cap\Gamma')\Big|=[\Gamma_p:\Gamma_p\cap\Gamma'],$$
        where the second equality is a consequence of the second isomorphism Theorem, according to which $\Gamma_p\Gamma'/\Gamma'\cong\Gamma_p /(\Gamma_p\cap\Gamma')$.
        Summarizing, for every fixed cusp $\ca_p$ in $X$, the number of cusps in $X'$ covering it is $\frac{[\Gamma:\Gamma']}{[\Gamma_p:\Gamma_p\cap\Gamma']}$. Summing over all cusps of $X$ concludes the proof of the Lemma.
    \end{pc}

\subsection{Eisenstein Series and Scattering Matrix}\label{Eisenstein series}

To each cusp $\ca_p\in\mathcal{C}(X)$ we associate the Eisenstein series 
$$E_p(z,s)=\sum_{\gamma\in\Gamma_p\setminus\Gamma}\left(\operatorname{Im} \sigma_p^{-1}\gamma. z\right)^{s},$$
where the series converges for $z\in X$ and $s\in\C$ with $\operatorname{Re} s>1$. The Eisenstein series does not depend on the choice of the parabolic fixed point $x_p$ for the cusp $\ca_p$. 
% In fact, if $\tilde p_j=\tilde\gamma p_j$ for some $\tilde\gamma\in\Gamma$, then 
% $$\sum_{\gamma\in\la\gamma_j\ra\setminus\Gamma}(\operatorname{Im}\sigma_j^{-1}\gamma z)^s=\sum_{\gamma\in\la\gamma_j\ra\setminus\Gamma}(\operatorname{Im}\tilde\sigma_j^{-1}\tilde\gamma\gamma z)^s$$
% $$=\sum_{\gamma\in\la\tilde\gamma_j\ra\setminus\Gamma}(\operatorname{Im}\tilde\sigma_j^{-1}\gamma z)^s,$$
% where the last step follows from observing that $\gamma\in\la\gamma_j\ra\setminus\Gamma\iff\tilde\gamma\in\la\tilde\gamma_j\ra\setminus\Gamma$.
For each $z\in X$, the Eisenstein series admits a meromorphic continuation $s\mapsto E_p(z,s)$ to the whole complex plane (see \cite[Sections 3.3+3.4]{iwaniec2021spectral}). The Eisenstein series $z\mapsto E_p(z,s)$ is an eigenfunction of the Laplacian on $X$ with eigenvalue $\lambda=s(1-s)$.

For $s\in\C$ and $j,k=1,\dots, q$, we can expand the Eisenstein series $E_{j}(\cdot,s)$ associated with the cusp $\ca_j$ in its Fourier series with respect to the cusp $\ca_k$. According to \cite[Thm. 3.4 + (6.18)]{iwaniec2021spectral}, there exists a function $\Phi_{j,k}:\C\to\C$ such that 
$$E_j^{k}(z,s)=E_j(\sigma_k(z),s)=\delta_{jk} y^s+\Phi_{j,k}(s) y^{1-s}+\sum_{n\ne 0}^\infty f^k_n(s,y)e^{inx}$$
for some functions $f^k_n(s,y)$ representing the coefficients of the non-zero Fourier modes. 

The $q\times q$ matrix $\Phi(s)$ with $j,k$-entries $\Phi_{j,k}(s)$ is called the \textbf{scattering matrix} of $X$. The determinant of $\Phi(s)$ is called the \textbf{scattering determinant} and it is denoted by $\varphi(s)$. We recall here some properties of the scattering matrix (see \cite[Theorem 6.6]{iwaniec2021spectral}:
    The scattering matrix satisfies the functional equation 
    \begin{equation}\label{scattering functional equation}
        \Phi(s)\Phi(1-s)=I.
    \end{equation}
    When $\operatorname{Re}s=\frac{1}{2}$, the scattering matrix $\Phi(s)$ is unitary, i.e.
    \begin{equation}\label{scattering is unitary}
        \Phi(s)\overline{\Phi^t(s)}=I.
    \end{equation}

\subsection{Polar coordinates}

Fix $x\in\Hyp$ and a vector $v\in T_x\Hyp$. Polar coordinates based at $(x,v)$ can be defined as follows. For any $z\in\Hyp\smallsetminus\{x\}$ there exists a unique geodesic $\gamma$ with constant speed one such that $\gamma(0)=x$ and passing through $z$. Let $r(z):=d(x,z)$ and $\theta(z)\in\mathbb{S}^1$ be the oriented angle from $v$ to $\dot\gamma(0)$. Then $(r,\theta)=(r(z),\theta(z))$ are the polar coordinates of $z$ relative to the choice of the base point $x$ and tangent vector $v$. In polar coordinates, the hyperbolic metric admits the expression 
$$dz^2=dr^2+\sinh rd\theta^2.$$

If $f\in L^1(\Hyp)$,then 

\begin{equation}\label{eq: polar coordinates}
    \int_\Hyp f(z)dz=\int_0^\infty\int_{\mathbb{S}^1} f(r,\theta)\sinh rd\theta dr.
\end{equation}

\subsection{Spherical functions, Invariant integral operators and Eigenvalue Functions}\label{selberg and abel transform}

The spectrum of the Laplacian on $\Hyp$ is $\left[\frac{1}{4},\infty\right)$. A $K$-bi-invariant function on $G$ (equivalently, a left $K$-invariant function on $\Hyp$) which is also an eigenfunction of the Laplacian on $\Hyp$ is called a \textbf{spherical function}. For any $s\in\left[0,\infty\right)$, there exists a unique spherical function $\phi_s$ such that $\phi_s(e)=0$ and 
$$\Delta\phi_s=\lambda_s\phi_s,$$
where $\lambda_s=\frac{1}{4}+s^2$. The above statement is a special case of a general Theorem of Harish-Chandra \cite{harish1958spherical}. A proof in our setting can be found in \cite[Lemma 1.12]{iwaniec2021spectral}.

Now, any $k\in C^\infty_c(\R)_{\operatorname{ev}}$ ( Equivalently, $k\in C^\infty_c(K\setminus G/K)$) defines an \textit{invariant integral operator} 
$$T_k(f)=\int_\Hyp k(\cdot,w)f(w)dw,$$ 
where by abuse of notation we write $k(z,w)=k(d(z,w))$. We call $k$ the \textit{kernel} of $T_k$.

Every eigenfunction of the Laplacian on $\Hyp$ is also an eigenfunction of all invariant integral operators with compactly supported kernels. More precisely (\cite[1.14]{iwaniec2021spectral}), if $k\in C^\infty_c(\R)_{\operatorname{ev}}$ and $\Delta f=\lambda_s f$, then 
\begin{equation}\label{invariant integral eigenvalue}
    T_k f=\widehat{k}(s)f,
\end{equation}
where 
$$\widehat{k}(s):=\int_0^\infty k(r)\phi_s(r)\sinh(r)dr$$
is the \textbf{spherical transform} of the kernel $k$ defined by integrating in polar coordinates against the spherical function with spectral parameter $s$. 
% Equivalently, 
% $$\widehat{k}(s)=\int_\Hyp k(z,i)\phi_s(z)dz.$$
\begin{rem}\label{spherical transform for non smooth kernels}
    Notice that (\ref{invariant integral eigenvalue}) can be extended to any invariant integral operator with kernel $k\in L^\infty(\R_+)$ which is compactly supported by means of dominated convergence\footnote{For any $k\in L^\infty(\R_+)$ and $\varepsilon>0$ there exists a sequence of kernels $k_n\in C^\infty(\R)_{\operatorname{ev}}$ whose restriction to $\R_+$ converges pointwise (almost surely) to $k$ and such that $k_n(r)\le k(r)+\varepsilon$ for every $r\ge 0$, so that by dominated convergence
    $\widehat{k_n}(s)\to\widehat{k}(s)$ and $T_{k_n}f(x)\to T_k f(x)$.}. 
\end{rem}

The spherical transform (also called in this setting the \textit{Selberg transform}) can equivalently be computed as the Fourier transform

$$\widehat{k}(s)=\int_\R e^{ist}g(t)dt$$
of the Abel transform 
$$g(t)=\mathcal{A}(k)(t)=\sqrt{2}\int_{|t|}^\infty\frac{k(x)\sinh x}{\sqrt{\cosh x-\cosh t}}dx.$$ 
The inverse spherical/Selberg transform is the inverse Abel transform
$$\mathcal{A}^{-1}(g)(\rho)=-\frac{1}{\sqrt{2}\pi}\int_{|\rho|}^\infty\frac{g'(t)}{\sqrt{\cosh t-\cosh\rho}}dt$$
of the inverse Fourier transform 
$$g(t)=\frac{1}{2\pi}\int_\R e^{-itr}h(r)dr.$$

% \begin{thm}[\cite{iwaniec2021spectral}\label{eigenvalue theorem}, Thm. 1.14 + 1.16] Every eigenfunction of the Laplacian on $\Hyp$ is also an eigenfunction of all invariant integral operators. More precisely, given $k\in C^\infty_c(\R)_{\operatorname{ev}}$ and  $f\in C^\infty(\Hyp)$ a $\Delta$-eigenfunction with eigenvalue $\lambda\in\C$, then
% $$T_k f=h(r_\lambda)f,$$
% where the eigenvalue $h(r_\lambda)$ is given by the Selberg transform 
% $$h(r_\lambda)=\mathcal{S}(k)(r_\lambda)$$
% and $r_\lambda\in\R$ is defined by $\lambda=\frac{1}{4}+r_\lambda^2$.
% We call $h$ the \textbf{eigenvalue function} of $T_k$. 
% \end{thm}

The condition that the kernel $k$ is compactly supported is not essential in order for (\ref{invariant integral eigenvalue}) to hold. However, a control on the decay of $k$ is necessary. It is more convenient to express sufficient conditions in terms of its spherical transform.

Let $h:\C\to\C$ satisfy the following conditions:
\begin{equation}\label{admissible eigenvalue functions}
\begin{split}
    &\text{i) } h \text{ is even}, \\
    &\text{ii) } h \text{ is holomorphic in the strip } |\operatorname{Im} z|\le\frac{1}{2}+\eps,\\
    &\text{iii) } h(z)=O\left((1+|z|)^{-2-\eps}\right).
\end{split}
\end{equation}

The inverse Selberg transform gives a function $k\in C^\infty(\R)_{\operatorname{ev}}$ which is the kernel of an invariant integral operator $T_k$. We call a function $h$ satisfying conditions i)-iii) an \textbf{admissible eigenvalue function}. The spherical transform $\widehat{k}$ of $k$ coincides with the eigenvalue function $h$.

\subsection{Selberg Trace Formula}

For a thorough discussion of Selberg Trace Formula for non-compact surfaces with finite area we refer to \cite[Chapter 10]{iwaniec2021spectral}.  Let $X=\Gamma\setminus\Hyp$ be a finite-area hyperbolic surface, and let $\mathcal{F}\subset\Hyp$ be a fundamental domain of $X$. Let $h$ be an admissible eigenvalue function, $k$ the associated kernel and $T_k$ the invariant integral operator with kernel $K(x,y):=\sum_{\gamma\in\Gamma}k(x,\gamma .y)$. 
% Given $f\in L^2(X)$, following standard trace formula arguments (see e.g. \cite[Section 10.1]{iwaniec2021spectral} or \cite[Theorem 9.3.2]{deitmar2014principles}), we obtain

% \begin{equation}\label{diverging trace}
%     T_kf(z)=\int_{\mathcal{F}}\sum_{\gamma\in\Gamma}k(z,\gamma w) f(w) dw.
% \end{equation}

If $X$ has cusps, the integral 
$$\int_\F\sum_{\gamma\in\Gamma}k(z,\gamma z)dz$$
diverges, so that $T_k$ is not trace class. The decomposition of $X$ as disjoint union of a compact surface and cuspidal zones (\ref{surface decomposition}) induces a decomposition 
$$\F=\F(\upsilon)\cup\bigcup_{p=1}^q \F_p(\upsilon).$$
Selberg's idea was to compute the trace on $\F(\upsilon)\subset\F$ both spectrally and geometrically and then study the asymptotic as $\upsilon\to\infty$. Diverging terms in $\upsilon$ then cancel each other. The outcome of this procedure is known as Selberg's trace formula. 

We denote $\Gamma_{\hyp}=\{\gamma\in\Gamma\ : \ \gamma\ \text{hyperbolic}\}$.  Let
$$0=\lambda_0<\lambda_1\le\dots$$
be the sequence of eigenvalues in the discrete Laplace spectrum of $X$. For all $j\in\N$, let $s_j$ be defined by $\lambda_j=\frac{1}{4}+s_j^2$. Since $X$ is smooth, there are no elliptics element in $\Gamma$. Selberg Trace Formula for $X$ reads as follows (\cite[Sections 10.2-10.5]{iwaniec2021spectral}):

 \begin{equation*}
     \begin{split}
         \sum_j h(s_j)+\frac{1}{4\pi}\int_\R h(s)\frac{-\varphi'_X}{\varphi_X}\left(\frac{1}{2}+is\right)ds=&\frac{\operatorname{vol}(X)}{4\pi}\int_\R h(s)\tanh(\pi s)ds\\
         +&\int_\F\sum_{\gamma\in\Gamma_{\hyp}} k(z,\gamma. z)dz\\
         +&\left(g(0)\gamma_E-\int_0^\infty\log\left(\sinh \frac{t}{2}\right)g'(t)dt\right)q\\
         -&\frac{h(0)}{4}\operatorname{Tr}\left(\Phi\left(\frac{1}{2}\right)\right),
     \end{split}
 \end{equation*}
 where $q$ is the number of cusps of $X$, $\gamma_E$ is the Euler-Mascheroni constant and $g$ is defined as in Section \ref{selberg and abel transform}.
 In the RHS, the first and second lines represent the contribution to the trace of the identity and hyperbolic motions respectively. The third line represents the contribution to the trace of the parabolic motions, after removing the diverging terms. The fourth line comes from the computation of the spectral trace.

\newpage

\section{Spectral convergence}\label{spectral convergence}

We provide some background for Plancherel convergence of cocompact lattices and explain the main difficulties in extending the definition to the nonuniform case. Let $G$ be a connected semisimple Lie group of non-compact type and fix a Haar measure $\mu$ on $G$. Let $\Gamma\le G$ be a uniform lattice and $\widehat G$ the unitary dual of $G$. For any unitary representation $(\eta, H_\eta)$ of $G$ and $f\in C^\infty_c(G)$ we denote by $\eta(f)$ the bounded operator on $H_\eta$ defined as a Bochner integral by $\eta(f):=\int_G f(x)\eta(x) dx\in\mathcal{B}(H_\eta)$. The right regular representation $R$ on $L^2(\Gamma\setminus G)$ defined by $R_y\phi(x):=\phi(xy)$ decomposes into a direct sum of unitary irreducible representations
\begin{equation*}
    L^2(\Gamma\setminus G)\cong \bigoplus_{\pi\in\widehat G} N_\Gamma(\pi)\pi,
\end{equation*}
with finite multiplicities $N_\Gamma(\pi)$. Let $\delta_\pi$ denote the Dirac measure for $\pi\in\widehat G$. The measure on $\widehat G$ defined by 
    \begin{equation*}
        \mu_\Gamma=\sum_{\pi\in\widehat G}N_{\Gamma}(\pi)\delta_\pi
    \end{equation*}
    is called the \textit{spectral measure} associated with $\Gamma$. 
    Let $\mu_{\Pl}$ be the Plancherel measure on $\widehat{G}$. A sequence $(\Gamma_n)_{n\in\N}$ of uniform lattices in $G$ is called \textit{Plancherel convergent} (or a \textit{Plancherel sequence}) if for every $f\in C^\infty_c(G)$
    \begin{equation*}
        \frac{1}{\text{vol}(\Gamma_n\setminus G)}\mu_{\Gamma_n}(\hat f)\xrightarrow[]{n\to\infty}\mu_{\Pl}(\hat f),
    \end{equation*}
    where $\hat f(\pi):=\operatorname{tr}\pi(f)$. In particular, notice that $\mu_{\Gamma_n}(\hat f)=\operatorname{tr} R_n(f)$. A difficulty in defining Plancherel convergence for sequences of nonuniform lattices is that the right regular representation is not trace class. That is, if $\Gamma\le G$ is a nonuniform lattice and $f\in C^\infty_c(G)$, then the operator $R(f):L^2(\Gamma\setminus G)\to L^2(\Gamma\setminus G)$ need not be trace class, so requiring 
    $$\frac{\operatorname{tr} R_n(f)}{\vol(\Gamma_n\setminus G)}\to\mu_{\Pl}(\hat f)$$
    is meaningless. The Selberg trace formula provides another perspective on Plancherel convergence. In the cocompact case, $\mu_\Gamma(\hat f)$ is precisely the spectral side of the trace formula, whereas $\mu_{\Pl}(\hat f)$ is the contribution of the identity element to orbital integrals in the geometric side. Thus, Plancherel convergence can be viewed as the statement that, after normalization by the volume, the spectral side converges to the identity contribution. This viewpoint continue to make sense for nonuniform lattices, even though the regular representation is no longer trace class. This is the interpretation we employ in the following for hyperbolic surfaces.

\subsection{Definition and geometric interpretation}

Let $(X_n)_{n\in\N}$ be a sequence of finite-area hyperbolic surfaces. For each $n\in\N$, let
$$0=\lambda_0^{(n)}<\lambda_1^{(n)}\le \lambda_2^{(n)}\le\dots$$
be the sequence of eigenvalues in the discrete Laplace spectrum of $X_n$. For all $j,n\in\N$, let $r_j^{(n)}$ be defined by $\lambda_j^{(n)}=\frac{1}{4}+\left(s_j^{(n)}\right)^2$.
We say that $(X_n)_{n\in\N}$ is a \textbf{Plancherel sequence} if for any $k\in C^\infty_c(\R)_{\operatorname{ev}}$
$$\frac{1}{\text{vol}(X_n)}\left(\sum_j h\left(s^{(n)}_j\right)+\frac{1}{4\pi}\int_\R h(s)\frac{-\varphi_{X_n}'}{\varphi_{X_n}}\left(\frac{1}{2}+is\right)ds\right)\xrightarrow[]{n\to\infty}\mu_{\operatorname{Pl}}(h),$$
where $h=\widehat{k}$ and 
$$\mu_{\operatorname{Pl}}(h)=\frac{1}{4\pi}\int_\R h(s)\tanh(\pi s)sds.$$

Classical examples of Plancherel sequences of cocompact lattices are towers of normal subgroups (see \cite{degeorge1978limit}). We will show that this is the case for nonuniform lattices as well in Example \ref{towers are plancherel}. Our first goal is to provide a geometric interpretation of Plancherel sequences, reminiscent of the case of compact surfaces (see \cite[Prop. 2.9]{deitmar2019benjamini}), with the help of Selberg trace formula. In the following, we denote the number of cusps of $X_n$ by $q_n$. A first consequence of the definition is that the contribution of cusps must become negligible. this will later allow us to isolate the contribution of hyperbolic elements in the trace formula. 

\begin{lemma}\label{not too many cusps}
    Let $(X_n)_{n\in\N}$ be a Plancherel sequence of finite-area hyperbolic surfaces. Then 
    $$\frac{q_n}{\operatorname{vol}(X_n)}\xrightarrow[]{n\to\infty}0.$$
\end{lemma}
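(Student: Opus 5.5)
The plan is to feed the trace formula into the Plancherel condition and test it against suitably chosen kernels. For $k\in C_c^\infty(\R)_{\operatorname{ev}}$ with $h=\widehat k$ and Abel transform $g$, subtract the identity term from both sides of the Selberg trace formula for $X_n$. The identity term is exactly $\vol(X_n)\mu_{\Pl}(h)$, so after dividing by $\vol(X_n)$ the Plancherel hypothesis gives
$$\frac{H_n(k)}{\vol(X_n)}+\frac{q_n}{\vol(X_n)}P(k)-\frac{h(0)}{4}\,\frac{\operatorname{Tr}\Phi_n(\tfrac12)}{\vol(X_n)}\xrightarrow[]{n\to\infty}0 .$$
Here $H_n(k)=\int_{\F_n}\sum_{\gamma\in\Gamma_{n,\hyp}}k(z,\gamma z)\,dz$ and $P(k)=g(0)\gamma_E-\int_0^\infty\log(\sinh\tfrac t2)g'(t)\,dt$. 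By (\ref{scattering functional equation}) and (\ref{scattering is unitary}), $\Phi_n(\tfrac12)$ is a unitary involution, so $|\operatorname{Tr}\Phi_n(\tfrac12)|\le q_n$. Writing $Q_n=q_n/\vol(X_n)$, the relation becomes
$$\frac{H_n(k)}{\vol(X_n)}+Q_n\Big(P(k)+O\big(|h(0)|\big)\Big)\to 0 \qquad\text{for every admissible }k.$$
The task is to choose kernels for which this forces $Q_n\to0$.

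If $k\ge0$, every hyperbolic term is nonnegative and $H_n(k)$ is monotone in $k$. However, I expect that no single nonnegative kernel suffices. For $k\ge0$ radially decreasing, $g$ is nonnegative and decreasing, and integrating by parts suggests that $P(k)-h(0)/4$ is negative: it behaves like $g(0)(\gamma_E-\log2)$ for spread-out kernels. Thus the cusp term and the hyperbolic term can balance each other. I will therefore play two scales against each other.

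Take $k_\rho$ to be a smoothed $\1_{[0,\rho]}$ with $0\le k_\rho\le1$. Then $g_\rho(0)\asymp\rho$ and $h_\rho(0)=O(\rho^2)$. The main computation is the asymptotic of the logarithmic integral, which gives
$$P(k_\rho)\approx g_\rho(0)\log\rho\asymp-\rho\log(1/\rho).$$
So for small $\rho$ the relation yields the lower bound
$$\frac{H_n(k_\rho)}{\vol(X_n)}\ge c\,Q_n\,\rho\log(1/\rho)-o(1).$$
For a fixed $\delta$ it yields the upper bound
$$\frac{H_n(k_\delta)}{\vol(X_n)}\le C\,Q_n\,\delta\log(1/\delta)+o(1).$$

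The remaining, geometric, ingredient compares $H_n(k_\rho)$ with $H_n(k_\delta)$ for $\rho<\delta$. I aim to show that, for small $\delta$, $H_n(k_\rho)\le C'(\rho/\delta)H_n(k_\delta)$ with $C'$ independent of $n$, which amounts to a linear-in-$\rho$ bound on displacement sets. The heuristic comes from a collar around a closed geodesic of length $\ell<\rho$. There the displacement of the generator is roughly $\ell\cosh r$, so the set $\{d(z,\gamma z)<\rho\}$ has area about $2\rho$, uniformly in $\ell$. One has to sum over primitive elements and their powers, which I will control using the collar lemma. Cusp neighbourhoods contribute only parabolic elements and do not enter $H_n$. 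Combining the three estimates gives
$$Q_n\,\rho\log(1/\rho)\lesssim Q_n\,\rho\log(1/\delta)+o(1).$$
Letting $\rho\to0$ with $\delta$ fixed then forces $\limsup Q_n=0$.

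I expect the comparison $H_n(k_\rho)\lesssim(\rho/\delta)H_n(k_\delta)$ to be the main obstacle, since it requires uniform control of the geometry of short geodesics and their collars, possibly near cusps. If it proves too delicate, I will first check whether some kernel of indefinite sign, or a positive-definite $k=k_1*k_1$ combined with positivity on the spectral side, gives a cusp coefficient with a favourable sign directly.
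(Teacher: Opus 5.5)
Your plan is sound and takes a genuinely different route from the paper. It also repairs the paper's argument.

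\textbf{What the paper does, and why it fails.} The paper uses a single test kernel: $k_0$ is the inverse Abel transform of a truncated Gaussian $g_0$. Since $g_0$ is decreasing, $k_0\ge0$ and $\mathbf{(A)}_n\ge0$. The paper then asserts that
\[
C(g_0)=g_0(0)\gamma_E-\frac{h_0(0)}{4}-\int_0^\infty\log\Big(\sinh\frac r2\Big)g_0'(r)\,dr>0 ,
\]
so that $\operatorname{Re}\mathbf{(B)}_n\ge C(g_0)q_n\ge0$ and both nonnegative terms must be $o(\vol X_n)$. Your integration-by-parts heuristic identifies exactly why this fails. Writing $\log\sinh\frac t2=\frac t2-\log2+\log(1-e^{-t})$ gives
\[
C(g)=g(0)(\gamma_E-\log 2)-\int_0^\infty\frac{g(0)-g(t)}{e^t-1}\,dt .
\]
This is strictly negative for every decreasing $g$ with $g(0)>0$, including the Gaussian. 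The paper's justification overlooks that $\log\sinh\frac r2<0$ for $r<2\sinh^{-1}1$. Since a priori $\operatorname{Tr}\Phi_n(\frac12)$ can be as large as $q_n$, the one-kernel positivity argument does not go through.

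\textbf{Why your argument works.} Your two-scale argument needs no sign condition on the cusp coefficient. Shrinking the support makes the scattering term $O(\rho^2)Q_n$, which is negligible against $-P(k_\rho)\asymp\rho\log(1/\rho)$. For a radially decreasing $k$ supported in $[0,\rho]$ with $k\ge\1_{[0,\rho/2]}$, the upper bound follows directly from $P(k)\le g(0)\big(\gamma_E+\log\sinh\frac\rho2\big)$ and $g(0)\gtrsim\rho$. The trace formula then forces $H_n(k_\rho)/\vol X_n\gtrsim Q_n\rho\log(1/\rho)$. This is incompatible with at most linear growth of $H_n$ in $\rho$ unless $Q_n\to0$. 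Had the paper's claim been true, it would have given the lemma from one kernel with no geometric input. Your route needs one extra comparison, but it actually proves the statement.

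\textbf{The comparison step is easy.} The step you flag as the main obstacle needs no collar geometry. Use the conjugacy-class expansion of Remark \ref{plancherel convergence and closed geodesics}, which holds for any nonnegative measurable $k$ by Tonelli:
\[
H_n(k)=\sum_{[\gamma]}\frac{l_{\gamma_0}}{2\sinh(l_\gamma/2)}\,g(l_\gamma),
\]
with nonnegative weights. For $k=\1_{[0,\rho]}$ one has
\[
g_\rho(\ell)=2\sqrt2\sqrt{(\cosh\rho-\cosh\ell)_+}.
\]
The map $x\mapsto(\cosh x-\cosh\ell)/x^2$ is increasing on $(0,\infty)$, so $g_\rho\le\frac\rho\delta\,g_\delta$ pointwise whenever $\rho\le\delta$. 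Now sandwich with smooth kernels $k_\rho^-\le\1_{[0,\rho]}$ and $k_\delta^+\ge\1_{[0,\delta]}$, and use that $H_n$ is monotone on nonnegative kernels. This gives $H_n(k_\rho^-)\le\frac\rho\delta H_n(k_\delta^+)$ for all $n$, with constant $1$ and any fixed $\delta$.

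Your collar heuristic (area about $2\rho$ per short geodesic) misses a factor $\log(\rho/\ell)$ coming from the powers $\gamma_0^m$, $m<\rho/\ell$. That factor is increasing in $\rho$, so the comparison is unaffected.
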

\begin{proof}
Let us denote  
\begin{equation*}
    \begin{split}
        \textbf{(A)}_n=&\int_{X_n} \sum_{\Gamma_{n,\hyp}} k(z,\gamma .z)dz,\\
        \textbf{(B)}_n=&\left(g(0)\gamma_E-\int_0^\infty\log(\sinh\frac{r}{2})g'(r)dr\right)q_n-\frac{h(0)}{4}\operatorname{Tr}\left(\Phi_n\left(\frac{1}{2}\right)\right).
    \end{split}
\end{equation*}

By means of the Selberg Trace Formula we see that a sequence is Plancherel if and only if for any $k\in C^\infty_c(\R)_{\operatorname{ev}}$
    \begin{equation}\label{playing with trace formula}
    \begin{split}
        \frac{\textbf{(A)}_n+\textbf{(B)}_n}{\operatorname{vol}(X_n)}\xrightarrow[]{n\to\infty}0.
    \end{split}
    \end{equation}
    \begin{claim}\label{claim super sayan}
    
        There exists a kernel $k_0\in C^\infty_c(\R)_{\operatorname{ev}}$ such that for each $n\in\N$
    \begin{equation}\label{conditions in claim}
        \begin{split}
            &\textbf{(A)}_n>0,\\
            &g_0(0)\gamma_E-\frac{h_0(0)}{4}-\int_0^\infty\log(\sinh\frac{r}{2})g_0'(r)dr>0.
        \end{split}
    \end{equation}
    \end{claim}
Now we show the Lemma assuming the Claim, and only prove it afterwards. Recall that $\Phi_n\left(\frac{1}{2}\right)$ is unitary (\ref{scattering is unitary}), so that $\left|\operatorname{Tr}\left(\Phi_n\left(\frac{1}{2}\right)\right)\right|\le q_n$. Then, for each $n\in\N$
$$0\le\left(g_0(0)\gamma_E-\frac{h_0(0)}{4}-\int_0^\infty\log(\sinh\frac{r}{2})g_0'(r)dr>0\right)q_n\le\operatorname{Re}\textbf{(B)}_n.$$
Hence, we get by (\ref{playing with trace formula}) that both 
$$\frac{\textbf{(A)}_n}{\operatorname{vol}(X_n)}\xrightarrow[]{n\to\infty}0\qquad\text{and}\qquad\frac{\operatorname{Re}\textbf{(B)}_n}{\operatorname{vol}(X_n)}\xrightarrow[]{n\to\infty}0.$$
This implies that 
    \begin{equation*}
    \begin{split}
        &\frac{q_n}{\operatorname{vol(X_n)}}\left(g_0(0)\gamma_E-\frac{h_0(0)}{4}-\int_0^\infty\log(\sinh\frac{r}{2})g_0'(r)dr\right)\xrightarrow[]{n\to\infty}0\\
        \iff&\frac{q_n}{\operatorname{vol(X_n)}}\xrightarrow[]{n\to\infty}0.
    \end{split}
    \end{equation*}

     We are now left with the proof of the Claim.
     
    \textit{Proof of Claim \ref{claim super sayan}.}
    In order to provide a test kernel whose hyperbolic contribution is always nonnegative while the parabolic coefficient is strictly positive, we start by prescribing the eigenvalue function. Let $h(s)=e^{-\frac{s^2}{4\pi}}$, so that $g(r)=e^{-r^2}$. Notice that $\gamma_E>\frac{1}{4}$ and that $g'(r)<0$, so that
    $$C(g):=g(0)\gamma_E-\frac{h(0)}{4}-\int_0^\infty\log(\sinh\frac{r}{2})g'(r)dr>0.$$
    Let 
    $$k(\rho)=\mathcal{A}^{-1}(g)(\rho)=-\frac{1}{\sqrt{2}\pi}\int_\rho^\infty\frac{g'(r)}{\sqrt{\cosh r-\cosh \rho}}dr.$$
    Since $g'(r)<0,$ we get $k(\rho)>0$, so that for each $n\in\N$
    $$\int_{\F_n}\sum_{\Gamma_{n,\hyp}}k(z,\gamma. z)dz>0.$$
    The kernel $k$ satisfies the required properties. However, it is not compactly supported. To fix this, we approximate $k$ with a sequence of suitable compactly supported kernels. For $j\in\N$ let $\psi_j\in C_c^\infty(\R)$ be a cut-off function such that 
    \begin{equation*}
        \begin{split}
            &\text{a) }\psi_j(x)=1\ \text{for } |x|\le j,\\
            &\text{b) }\psi_j(x)=0\ \text{for } |x|\ge j+1.\\
            &\text{c) }\psi_j(x)\ge0\ \text{for all }x\in\R.\\
            &\text{d)} |\psi_j'(x)|\le M\ \text{for some fixed $M>0$.}
        \end{split}
    \end{equation*}
    Consider the sequence $(g_j)_{j\in\N}$, where $g_j=g\psi_j$ for each $j\in\N$. Then,
    $$C(g_j)\xrightarrow[]{j\to\infty}C(g)>0.$$
    Choose $j_0\in\N$ large enough so that $C(g_{j_0})>0$. Notice that 
    $$k_{j_0}(\rho)=\mathcal{A}^{-1}(g_{j_0})(\rho)\ge 0\qquad \text{for all }\rho\in[0,\infty),$$
    so that 
    $$\int_{\F_n}\sum_{\Gamma_{n,\hyp}}k_{j_0}(z,\gamma. z)dz>0. $$
    Also, $k_{j_0}$ is compactly supported. This concludes the proof of the Claim. 
\end{proof}

\begin{propx}\label{first geometric plancherel interpretation}
    A sequence $(X_n)$ of finite-area hyperbolic surfaces is Plancherel if and only if
    \begin{itemize}
        \item $\frac{q_n}{\operatorname{vol}(X_n)}\xrightarrow[]{n\to\infty}0$, and
        \item for any $k\in C^\infty_c(\R)_{\operatorname{ev}}$,
    $$\frac{1}{\operatorname{vol}(X_n)}\int_{\mathcal{F}_n}\sum_{\Gamma_{n,\hyp}}k(z,\gamma .z)dz\xrightarrow[]{n\to\infty}0.$$
    \end{itemize} 
\end{propx}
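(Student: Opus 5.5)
The plan is to rewrite the Plancherel condition through the Selberg trace formula, exactly as in the proof of Lemma \ref{not too many cusps}. For every $k\in C^\infty_c(\R)_{\operatorname{ev}}$ the trace formula gives
\[
\text{(spectral side)}_n-\vol(X_n)\mu_{\Pl}(h)=\textbf{(A)}_n+\textbf{(B)}_n .
\]
Here $\textbf{(A)}_n$ is the hyperbolic contribution. The term $\textbf{(B)}_n$ collects the parabolic contribution together with $-\frac{h(0)}{4}\operatorname{Tr}\Phi_n(\frac12)$. So $(X_n)$ is Plancherel if and only if $(\textbf{(A)}_n+\textbf{(B)}_n)/\vol(X_n)\to0$ for every such $k$, which is (\ref{playing with trace formula}). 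Both directions of the proposition are reduced to comparing this with the two stated conditions.

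First I will bound $\textbf{(B)}_n$ by the number of cusps. Put $c(g)=g(0)\gamma_E-\int_0^\infty\log(\sinh\frac r2)g'(r)\,dr$. This is a finite constant depending only on $k$: $g$ is smooth with compact support and $g'(0)=0$, so $g'(r)=O(r)$ near $0$ and the logarithmic singularity is integrable. The scattering matrix $\Phi_n(\frac12)$ is unitary by (\ref{scattering is unitary}), so $|\operatorname{Tr}\Phi_n(\frac12)|\le q_n$. Hence
\[
|\textbf{(B)}_n|\le\Big(|c(g)|+\tfrac{|h(0)|}{4}\Big)q_n .
\]

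For the direction ($\Leftarrow$), assume $q_n/\vol(X_n)\to0$ and that the normalized hyperbolic terms tend to $0$. The bound above gives $\textbf{(B)}_n/\vol(X_n)\to0$, so the quantity in (\ref{playing with trace formula}) tends to $0$ and the sequence is Plancherel.

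For the direction ($\Rightarrow$), Lemma \ref{not too many cusps} already gives $q_n/\vol(X_n)\to0$. The same bound then gives $\textbf{(B)}_n/\vol(X_n)\to0$ for every $k$. Subtracting this from (\ref{playing with trace formula}) leaves $\textbf{(A)}_n/\vol(X_n)\to0$ for every $k\in C^\infty_c(\R)_{\operatorname{ev}}$, which is the second condition.

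The genuine work was already done in Lemma \ref{not too many cusps}, where the positivity argument separates the cusp count from the hyperbolic terms. The remaining point needing care is that the constant in the bound on $\textbf{(B)}_n$ is uniform in $n$, that is, independent of the surface. This holds because $c(g)$ and $h(0)$ depend only on $k$, and the trace bound uses only unitarity of $\Phi_n(\frac12)$.
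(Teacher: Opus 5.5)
Your proposal is correct and follows the paper's proof essentially verbatim: unitarity of $\Phi_n(\frac12)$ bounds $\textbf{(B)}_n$ by a constant depending only on $k$ times $q_n$, which gives ($\Leftarrow$) directly and, together with Lemma \ref{not too many cusps}, gives ($\Rightarrow$). A caution about your closing remark: ($\Rightarrow$) is only as solid as that lemma, and the positivity claimed in its proof fails for the Gaussian test function chosen there, because Gauss's integral for the digamma function $\psi$ gives $c(g)-\frac{h(0)}{4}=-\frac{1}{2\pi}\int_\R h(r)\bigl(\operatorname{Re}\psi(1+ir)+\log 2\bigr)\,dr$, and since $\operatorname{Re}\psi(1+ir)\ge\psi(1)=-\gamma_E>-\log 2$, this quantity is negative for every $h\ge 0$ with $h\not\equiv 0$.
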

\begin{proof}
    Recall that $\Phi_n\left(\frac{1}{2}\right)$ is unitary, so that $\left|\operatorname{Tr}\left(\Phi_n\left(\frac{1}{2}\right)\right)\right|\le q_n$. Hence, 
    if $\frac{q_n}{\operatorname{vol}(X_n)}\xrightarrow[]{n\to\infty}0$, then for any $k\in C_c^\infty(\R_+)$
    $$\frac{q_n}{\operatorname{vol}(X_n)}\left(g(0)\gamma_E-\int_0^\infty\log(\sinh\frac{r}{2})g'(r)dr\right)\xrightarrow[]{n\to\infty}0$$
    and 
    $$\frac{h(0)}{4\operatorname{vol}(X_n)}\operatorname{Tr}\left(\Phi_n\left(\frac{1}{2}\right)\right)\xrightarrow[]{n\to\infty}0.$$
    If also 
    $$\frac{1}{\operatorname{vol}(X_n)}\int_{\mathcal{F}_n}\sum_{\Gamma_{n,\hyp}}k(z,\gamma. z)dz\xrightarrow[]{n\to\infty}0,$$
    then (\ref{playing with trace formula}) holds and the sequence is Plancherel. 

    For the converse direction, assume that the sequence is Plancherel. From Lemma \ref{not too many cusps} we already know that 
    $$\frac{q_n}{\operatorname{vol}(X_n)}\xrightarrow[]{n\to\infty}0.$$
    
    Then (\ref{playing with trace formula}) reduces to
    $$\frac{1}{\operatorname{vol}(X_n)}\int_{\mathcal{F}_n}\sum_{\Gamma_{n,\hyp}}k(z,\gamma. z)dz\xrightarrow[]{n\to\infty}0$$
    for any $k\in C^\infty_c(\R)_{\operatorname{ev}}$.

\end{proof}

\begin{rem}\label{plancherel convergence and closed geodesics}
    Recall that any hyperbolic element in $\Gamma_n$ is a positive power of a uniquely determined primitive element $\gamma_0$, called the \textbf{primitive element underlying $\gamma$}. The contribution of hyperbolic elements to the trace is \cite[Thm. 11.4.3]{deitmar2014principles}
\begin{equation*}
   \int_\F \sum_{\Gamma_{n,\hyp}} k(z,\gamma .z)dz=\sum_{\substack{[\gamma]\\\gamma\in\Gamma_{n,\hyp}}}\frac{l_{\gamma_0}}{2\sinh(l_\gamma/2)}g(l_\gamma),
\end{equation*}
where the sum on the RHS runs over all conjugacy classes of hyperbolic elements. 
Moreover, the Abel transform of a kernel $k$ is an even function on $\R$ and it preserves the compactness of $k$. Since the Abel transform is invertible, we get that 
$$\mathcal{A}:k\in C^\infty_c(\R)_{\operatorname{ev}}\longrightarrow C_c^\infty(\R)_{\text{ev}}$$
is a bijection.
Therefore, as an immediate corollary of Proposition \ref{first geometric plancherel interpretation} we get that a sequence $(X_n)_{n\in\N}$ is Plancherel if and only if 
\begin{equation}\label{Plancherel condition on Abel transform}
    \begin{split}
        \bullet&\  \frac{q_n}{\operatorname{vol}(X_n)}\xrightarrow[]{n\to\infty}0,\text{ and}         \\
        \bullet&\text{ For any } g\in C^\infty_c(\R)_{\operatorname{ev}},\\
        &\qquad \frac{1}{\operatorname{vol}(X_n)}\sum_{\substack{[\gamma]\\\gamma\in\Gamma_{n,\hyp}}}\frac{l_{\gamma_0}}{2\sinh(l_\gamma/2)}g(l_\gamma)\xrightarrow[]{n\to\infty}0,
    \end{split}
\end{equation}
\end{rem}

\begin{lemma}\label{last plancherel lemma}
    Let $(X_n)_{n\in\N}$ be a sequence of finite-area hyperbolic surfaces. Then 
    $$\frac{1}{\operatorname{vol}(X_n)}\int_{\mathcal{F}_n}\sum_{\Gamma_{n,\hyp}}k(z,\gamma. z)dz\xrightarrow[]{n\to\infty}0$$
    for any $k\in C^\infty_c(\R)_{\operatorname{ev}}$ if and only if for all $R>0$
    $$\frac{1}{\operatorname{vol}(X_n)}\int_{\mathcal{F}_n}\sharp\left(\Gamma_{n,\hyp}. z\cap \overline{B_R}(z)\right)dz\xrightarrow[]{n\to\infty}0.$$
\end{lemma}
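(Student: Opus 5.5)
Both directions come from comparing the kernel $k(z,\gamma.z)=k(d(z,\gamma.z))$ with indicator functions of balls. Write $N_n(z,R):=\sharp\left(\Gamma_{n,\hyp}. z\cap \overline{B_R}(z)\right)$, so that
$$\int_{\mathcal{F}_n}\sum_{\Gamma_{n,\hyp}}\1_{[0,R]}\big(d(z,\gamma.z)\big)\,dz=\int_{\mathcal{F}_n}N_n(z,R)\,dz .$$
Here distinct hyperbolic $\gamma$ give distinct points $\gamma.z$, since $\Gamma_n$ is torsion free and acts freely.

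\textbf{($\Leftarrow$)} Let $k\in C^\infty_c(\R)_{\operatorname{ev}}$ with $\operatorname{supp}k\subset[-R,R]$. Then
$$|k(d(z,\gamma.z))|\le \|k\|_\infty\1_{[0,R]}\big(d(z,\gamma.z)\big),$$
and summing over $\gamma\in\Gamma_{n,\hyp}$ and integrating over $\mathcal{F}_n$ gives
$$\Big|\int_{\mathcal{F}_n}\sum_{\Gamma_{n,\hyp}}k(z,\gamma.z)\,dz\Big|\le\|k\|_\infty\int_{\mathcal{F}_n}N_n(z,R)\,dz .$$
Dividing by $\operatorname{vol}(X_n)$ and using the hypothesis, the left side tends to $0$. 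The only point to check is absolute convergence, so that the interchange is legitimate. For a fixed compactly supported kernel the hyperbolic sum converges absolutely; this is implicit in the trace formula and in the formula of Remark \ref{plancherel convergence and closed geodesics}, which is a finite sum for $g$ of compact support.

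\textbf{($\Rightarrow$)} Fix $R>0$ and choose $k\in C^\infty_c(\R)_{\operatorname{ev}}$ with $0\le k\le 1$, $k\equiv1$ on $[-R,R]$ and $\operatorname{supp}k\subset[-R-1,R+1]$. Since $k\ge0$ pointwise,
$$0\le\int_{\mathcal{F}_n}N_n(z,R)\,dz\le\int_{\mathcal{F}_n}\sum_{\Gamma_{n,\hyp}}k(z,\gamma.z)\,dz .$$
After dividing by $\operatorname{vol}(X_n)$ the right side tends to $0$ by hypothesis, so the left side does too. Unlike the proof of Lemma \ref{not too many cusps}, no positivity through the inverse Abel transform is needed: the kernel is prescribed directly, and positivity of each term makes the comparison immediate.

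\textbf{Main obstacle.} The main technical point is finiteness of $\int_{\mathcal{F}_n}N_n(z,R)\,dz$ for each fixed $n$, since $\mathcal{F}_n$ is noncompact. Deep in a cusp the displacement $d(z,\gamma.z)$ of a parabolic $\gamma$ tends to $0$, but parabolic elements are excluded from $\Gamma_{n,\hyp}$. For a hyperbolic $\gamma$ the displacement grows as $z$ goes up the cusp, so only finitely many conjugacy classes contribute and only on a compact part. To make this rigorous I would rewrite the integral as a sum over conjugacy classes, via the same unfolding that yields Remark \ref{plancherel convergence and closed geodesics}:
$$\int_{\mathcal{F}_n}N_n(z,R)\,dz=\sum_{[\gamma]}\int_{\Gamma_{\gamma}\setminus\Hyp}\1_{\{d(z,\gamma.z)\le R\}}\,dz .$$
Each term is the area of a finite collar around a closed geodesic of length $l_\gamma\le R$, and there are finitely many such classes. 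In fact, for the argument above it is enough to note that $N_n(z,R)$ is dominated by $\sum_{\Gamma_{n,\hyp}}k(z,\gamma.z)$, whose integral is finite. Both sides may therefore be treated as elements of $[0,\infty]$ with the inequalities holding, and no separate finiteness argument is required.
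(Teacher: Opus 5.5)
Your proposal is correct and follows the paper's proof: for $(\Rightarrow)$ you dominate the count by a nonnegative bump $k$ with $k\equiv 1$ on $[-R,R]$, and for $(\Leftarrow)$ you bound $|k|$ by $\|k\|_\infty\1_{[0,R]}$, choosing $R$ so that $\operatorname{supp}k\subseteq[-R,R]$. Your use of absolute values in $(\Leftarrow)$ is slightly more careful than the paper's one-sided inequality, as are your remarks on the free action and on the finiteness of the integrals; the paper leaves both points implicit.
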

\begin{proof}
    $(\Rightarrow)$ Let $R>0$. Let $k\in C^\infty_c(\R)_{\operatorname{ev}}$ be such that $k(r)=1$ for all $r\le R$, and $k(r)\ge 0$. Therefore, 
    $$\frac{1}{\text{vol}(X_n)}\int_{\mathcal{F}_n}\sharp\left(\Gamma_{n,\hyp}.z\cap \overline{B_R}(z)\right)dz\le\frac{1}{\text{vol}(X_n)}\int_{\mathcal{F}_n}\sum_{\Gamma_{n,\hyp}}k(z,\gamma .z)dz\xrightarrow{n\to\infty}0.$$
    $(\Leftarrow)$ Let $k\in C_c^\infty(\R)_{\operatorname{ev}}$. Choose $R>0$ such that $\operatorname{supp}k\subseteq[-R,R]$. Then
    \begin{equation}
    \begin{split}
        \frac{1}{\text{vol}(X_n)}\int_{\mathcal{F}_n}\sum_{\Gamma_{n,\hyp}}k(z,\gamma .z)dz=&\frac{1}{\text{vol}(X_n)}\int_{\mathcal{F}_n}\sum_{\substack{ \Gamma_{n,\hyp}\\ d(z,\gamma. z)\le R}}k(z,\gamma .z)dz\\
        \le&\frac{||k||_\infty}{\text{vol}(X_n)}\int_{\mathcal{F}_n}\sharp\left(\Gamma_{n,\hyp}.z\cap \overline{B_R}(z)\right)dz\xrightarrow[]{n\to\infty}0.
    \end{split}
    \end{equation}
    
\end{proof}

Finally, we provide a characterization of Plancherel sequences which is the counterpart of \cite[Proposition 2.9]{deitmar2019benjamini} in the noncompact case.

\begin{thm}\label{geometric plancherel thm}
    A sequence $(X_n)$ of finite-area hyperbolic surfaces is Plancherel if and only if
    \begin{enumerate}[1)]
        \item $\frac{q_n}{\operatorname{vol}(X_n)}\xrightarrow{n\to\infty}0$, and
        \item for all $R>0$
         \begin{equation*}
        \frac{1}{\operatorname{vol}(X_n)}\int_{\mathcal{F}_n}\sharp\left(\Gamma_{n,\hyp}.z\cap \overline{B_R}(z)\right)dz\xrightarrow[]{n\to\infty}0,
    \end{equation*}
    \end{enumerate}
\end{thm}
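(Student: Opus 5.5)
This follows by chaining Proposition \ref{first geometric plancherel interpretation} with Lemma \ref{last plancherel lemma}; no new analytic input should be needed. Proposition \ref{first geometric plancherel interpretation} says that $(X_n)$ is Plancherel if and only if two things hold. The first is $q_n/\operatorname{vol}(X_n)\to 0$. The second is that for every $k\in C^\infty_c(\R)_{\operatorname{ev}}$ the normalized hyperbolic contribution
$$\frac{1}{\operatorname{vol}(X_n)}\int_{\F_n}\sum_{\Gamma_{n,\hyp}}k(z,\gamma.z)\,dz$$
tends to $0$. Lemma \ref{last plancherel lemma} says, for an arbitrary sequence of finite-area surfaces, that this second condition is equivalent to condition 2) of the theorem, namely the vanishing of the normalized counts $\int_{\F_n}\sharp(\Gamma_{n,\hyp}.z\cap\overline{B_R}(z))\,dz/\operatorname{vol}(X_n)$ for every $R>0$.

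For the forward implication, assume $(X_n)$ is Plancherel. Proposition \ref{first geometric plancherel interpretation} (whose proof already uses Lemma \ref{not too many cusps}) gives condition 1) together with the vanishing of the hyperbolic kernel sums. Applying the ($\Rightarrow$) direction of Lemma \ref{last plancherel lemma} then gives condition 2).

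For the converse, assume 1) and 2). The ($\Leftarrow$) direction of Lemma \ref{last plancherel lemma} turns 2) into the vanishing of the hyperbolic kernel sums for all $k\in C^\infty_c(\R)_{\operatorname{ev}}$. Combined with 1), Proposition \ref{first geometric plancherel interpretation} shows the sequence is Plancherel.

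The only point needing care is in the ($\Rightarrow$) direction of Lemma \ref{last plancherel lemma}. There one must check that a nonnegative, even, smooth, compactly supported $k$ with $k\equiv 1$ on $[0,R]$ exists; a standard bump function does this. One also needs the integral $\int_{\F_n}\sharp(\cdot)\,dz$ to be finite, so that the comparison makes sense. This finiteness is automatic from the comparison itself, since the integral is dominated by the convergent hyperbolic contribution in the trace formula. No further obstacle is expected.
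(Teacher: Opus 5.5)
Your proposal is correct and follows the paper's own proof exactly: Theorem \ref{geometric plancherel thm} is obtained by combining Proposition \ref{first geometric plancherel interpretation} with Lemma \ref{last plancherel lemma} in each direction. The delicate input is not the bump-function step you single out but the positivity of the parabolic coefficient $C(g)=g(0)\gamma_E-\frac{h(0)}{4}-\int_0^\infty\log(\sinh\frac{r}{2})g'(r)\,dr$ in Lemma \ref{not too many cusps}, which your forward direction relies on through Proposition \ref{first geometric plancherel interpretation}: integrating by parts gives $C(g)=(\gamma_E-\log 2)g(0)-\int_0^\infty\frac{e^{-t}}{1-e^{-t}}(g(0)-g(t))\,dt$, which is negative for the choice $g(r)=e^{-r^2}$ made there (and for its truncations), so that lemma should instead use an even $g\ge 0$ with $g(0)=0$, for which $C(g)=\int_0^\infty\frac{e^{-t}}{1-e^{-t}}g(t)\,dt>0$ and the hyperbolic term is $\ge 0$ by Remark \ref{plancherel convergence and closed geodesics}.
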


\begin{proof}
    By Proposition \ref{first geometric plancherel interpretation} we get that $(X_n)_{n\in\N}$ is Plancherel if and only if $\frac{q_n}{\operatorname{vol}(X_n)}\xrightarrow{n\to\infty}0$ and for any $k\in C^\infty_c(\R)_{\operatorname{ev}}$
    $$\frac{1}{\text{vol}(X_n)}\int_{\mathcal{F}_n}\sum_{\Gamma_{n,\hyp}}k(z,\gamma. z)dz\xrightarrow[]{n\to\infty}0.$$
    By Lemma \ref{last plancherel lemma} this is equivalent to 
    $$\frac{1}{\text{vol}(X_n)}\int_{\mathcal{F}_n}\sharp\left(\Gamma_{n,\hyp}.z\cap \overline{B_R}(z)\right)dz\xrightarrow[]{n\to\infty}0,$$
    for all $R>0$, which proves the Theorem.
\end{proof}

\begin{rem}
    Let us denote $\Gamma^\star=\Gamma\smallsetminus \{\operatorname{id}\}$. Recall that a sequence of closed hyperbolic surfaces is Plancherel if and only if for all $R>0$ 
    $$\frac{1}{\vol X_n}\int_{\mathcal{F}_n}\sharp \Big(\Gamma_n^\star .z\cap\overline{B_R}(z)\Big)dy\xrightarrow[]{n\to\infty}0.$$
    This is false in the presence of cusps, for the reason that parabolic elements contribute too many orbit points when far enough into a cusp. However, we can retrieve a useful statement by applying a cutoff. That is, for any $\upsilon\ge 1$ 
    $$\frac{1}{\vol X_n}\int_{\mathcal{F}_n(\upsilon)}\sharp \Big(\Gamma_n^\star.z\cap \overline{B_R}(z)\Big) dz\xrightarrow[]{n\to\infty}0.$$
    This is a consequence of the fact that, while the stabilizer of a cusp contributes a lot or orbit points into ``its" cusp, it fails to contribute away from it. We make this precise. 
\end{rem}

\begin{lemma}\label{cusps and thin part}
    Let $X=\Gamma\setminus\Hyp$ be a hyperbolic surface of finite area with cusps $\{\ca_1\dots,\ca_q\}$. For each $k=1,\dots,q$, let $\gamma_k\in\Gamma$ be as defined in section \ref{cusps}. Then, for each $R>0$ there exists a constant $C_R$, only depending on $R$, such that for every $k=1,\dots,q$
    $$\operatorname{vol}\left(\{ z\in\F\ :\ d(z,\gamma_k .z)<R\}\right)\le C_R,$$
    where $\F$ is a fundamental domain for $X$.
\end{lemma}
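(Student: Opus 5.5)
The plan is to reduce to the standard model of a cusp, where $\gamma_k$ becomes the translation $z\mapsto z+1$, and compute its displacement explicitly. Let $S_k:=\{z\in\Hyp\ :\ d(z,\gamma_k.z)<R\}$. Since $\sigma_k$ is an isometry and $\sigma_k^{-1}\gamma_k\sigma_k=T$ with $T(z)=z+1$, we have $d(z,\gamma_k.z)=d(w,w+1)$ for $w=\sigma_k^{-1}z$. Hence $S_k=\sigma_k(S)$, where
$$S:=\{w\in\Hyp\ :\ d(w,w+1)<R\}.$$

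\textbf{Step 1: describe $S$ as a horoball.} By the distance formula in the upper half-plane, $\cosh d(w,w+1)=1+\frac{1}{2y^2}$ for $w=x+iy$. This is strictly decreasing in $y$. So $S=\cu(y_R)$ with the boundary horocycle removed, where $y_R$ solves $\cosh R=1+\frac{1}{2y_R^2}$, that is,
$$y_R=\frac{1}{2\sinh(R/2)}.$$
In particular, $S$ is invariant under $T$, and hence $S_k$ is invariant under $\Gamma_k=\la\gamma_k\ra$.

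\textbf{Step 2: compare volumes through the projection.} Let $\pi:\Hyp\to X$ be the projection. Up to a null set, $\pi$ is injective on $\F$, so
$$\operatorname{vol}(\F\cap S_k)=\operatorname{vol}\big(\pi(\F\cap S_k)\big)\le \operatorname{vol}(\pi(S_k)).$$
The map $\pi|_{S_k}$ factors through the quotient $\Gamma_k\setminus S_k$. The induced map $\Gamma_k\setminus S_k\to X$ is a local isometry onto $\pi(S_k)$. A surjective local isometry can only decrease volume, because the image of a set is covered at least once. Therefore
$$\operatorname{vol}(\pi(S_k))\le\operatorname{vol}(\Gamma_k\setminus S_k).$$

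\textbf{Step 3: compute the quotient volume.} Conjugating by $\sigma_k$, the quotient $\Gamma_k\setminus S_k$ is isometric to $\la T\ra\setminus S$. A fundamental domain for it is $\{0\le x<1,\ y>y_R\}$, so
$$\operatorname{vol}(\Gamma_k\setminus S_k)=\int_{y_R}^\infty\int_0^1\frac{dx\,dy}{y^2}=\frac{1}{y_R}=2\sinh(R/2).$$
We may therefore take $C_R=2\sinh(R/2)$, which depends only on $R$ and not on $k$, $X$ or $\F$.

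\textbf{Main obstacle.} The only delicate point is Step 2. The set $\F\cap S_k$ need not lie inside a single nice fundamental domain for $\Gamma_k$, so we cannot simply integrate over it directly. The argument avoids this by passing through $\pi$: it uses injectivity of $\pi$ on $\F$ and the fact that the image of the $\Gamma_k$-quotient cannot have larger volume than the quotient itself. This argument does not require the cuspidal zones to be disjoint, and in particular it also covers the case $y_R<1$, that is, large $R$.
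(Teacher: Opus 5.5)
Your proof is correct and follows essentially the same route as the paper. Both conjugate $\gamma_k$ by $\sigma_k$ to the translation $w\mapsto w+1$, observe that the condition $d(w,w+1)<R$ confines $w$ to a horoball $\{\operatorname{Im} w> y_R\}$, and bound the volume by that of the truncated strip, $1/y_R$.

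The one real difference is your Step 2, and it is an improvement. The paper concludes directly that $\sigma_k^{-1}z\in Y_R=\{0\le \operatorname{Re} w\le 1,\ \operatorname{Im} w\ge y(R)\}$. However, the displacement condition only controls $\operatorname{Im}\sigma_k^{-1}z$. For an arbitrary fundamental domain $\F$, the real part need not lie in $[0,1]$. This is especially clear when $y_R<1$, where the horoball extends beyond the cuspidal zone.

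What is true is that reducing $\operatorname{Re}\sigma_k^{-1}z$ modulo $1$ is injective on $\sigma_k^{-1}(\F\cap S_k)$ up to a null set, since two points differing by an integer translation are $\Gamma$-equivalent. Your passage through $\pi$ and the local isometry $\Gamma_k\backslash S_k\to X$ encodes exactly this fact. So your argument supplies the justification the paper omits, and it also gives the explicit constant $C_R=2\sinh(R/2)$.
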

\begin{proof}
    Fix $k\in\{1,\dots,q\}$ and consider $z\in \F$. Let $\sigma_k$ be defined as in section \ref{cusps}.
    Let $y(R)$ be small enough such that $d(w,w+1)>R$ whenever $\operatorname{Im}w< y(R)$. The set $Y_R=\{w\in\Hyp\ :\ 0\le\operatorname{Re}w\le1,\ \operatorname{Im}w\ge y(R)\}$ has volume $\frac{1}{y(R)}$. Therefore,
    $$\operatorname{vol}\left(\{ z\in\F\ :\ d(z,\gamma_k. z)<R\}\right)\le\operatorname{vol}(Y_R)=\frac{1}{y(R)}.$$
    In fact, 
    \begin{equation*}
        \begin{split}
            d(z,\gamma_k.z)=&d(\sigma_k^{-1}.z,\sigma_k^{-1}\gamma_k. z)\\
            =& d(\sigma_k^{-1} .z,\left(\begin{smallmatrix}1 & 0\\0 & 1\end{smallmatrix}\right)\sigma_k^{-1}.z)\\
            =&d(\sigma_k^{-1}.z,\sigma_k^{-1}.z+1),
        \end{split}
    \end{equation*}
    so that $d(z,\gamma_k .z)<R\iff d(\sigma_k^{-1}.z,\sigma_k^{-1}.z+1)<R\Rightarrow\sigma_k^{-1}.z\in Y(R)$.
\end{proof}

\begin{lemma}\label{lem: Plancherel and parabolic contribution}
    Let $(X_n)_{n\in\N}$ be a Plancherel sequence of finite-area hyperbolic surfaces. Then for every $R>0$ and $\upsilon\ge 1$ 
    $$\frac{1}{\vol X_n}\int_{\mathcal{F}_n(\upsilon)}\sharp \Big(\Gamma_n^\star.z\cap\overline{B_R}(y)\Big)dy\xrightarrow[]{n\to\infty}0.$$
\end{lemma}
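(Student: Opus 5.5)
Since $\Gamma_n$ is torsion free, every nontrivial element is either hyperbolic or parabolic. This gives the splitting
$$\sharp\big(\Gamma_n^\star.z\cap\overline{B_R}(z)\big)=\sharp\big(\Gamma_{n,\hyp}.z\cap\overline{B_R}(z)\big)+\sharp\big(\Gamma_{n,\para}.z\cap\overline{B_R}(z)\big).$$
Because $\F_n(\upsilon)\subseteq\F_n$, the integral of the hyperbolic part over $\F_n(\upsilon)$ is bounded by the same integral over $\F_n$. Normalized by $\vol(X_n)$, that integral tends to $0$ by Theorem~\ref{geometric plancherel thm}, condition 2). The work is therefore in the parabolic term. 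I will show it is bounded by $C(R,\upsilon)\,q_n$, and then conclude by Lemma~\ref{not too many cusps}.

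\textbf{Parabolic term.} Every parabolic $\gamma\in\Gamma_n$ fixes a unique parabolic point. That point is $\delta.x_p$ for some cusp $\ca_p$ and some $\delta\in\Gamma_n$, so $\gamma=\delta\gamma_p^m\delta^{-1}$ with $m\neq0$. For $z\in\F_n(\upsilon)$, set $w=\sigma_p^{-1}\delta^{-1}.z$. Then
$$d(z,\gamma.z)=d(w,w+m),\qquad \cosh d(w,w+m)=1+\frac{m^2}{2(\operatorname{Im}w)^2}.$$
So the condition $d\le R$ forces $|m|\le c_R\operatorname{Im}w$.

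Since $z$ lies in the compact part $X(\upsilon)$, it is not in the open cuspidal zone. This gives $\operatorname{Im}(\sigma_p^{-1}\delta'.z)\le\upsilon$ for every $\delta'\in\Gamma_n$, and in particular $\operatorname{Im}w\le\upsilon$. Hence, for each pair (cusp $p$, coset $\delta\Gamma_p$), the number of admissible $m$ is at most $2c_R\upsilon$. It is nonzero only if $\operatorname{Im}w\ge y(R)$, in the notation of Lemma~\ref{cusps and thin part}.

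Summing over $\delta\in\Gamma_n/\Gamma_p$ and unfolding, for each cusp $p$ we get
$$\int_{\F_n(\upsilon)}\sum_{\delta\in\Gamma_n/\Gamma_p}\1\big[y(R)\le\operatorname{Im}\sigma_p^{-1}\delta^{-1}.z\le\upsilon\big]\,dz\;\le\;\vol\big(\Gamma_\infty\setminus\{y(R)\le\operatorname{Im}w\le\upsilon\}\big)\;\le\;\frac{1}{y(R)}.$$
Here $\Gamma_\infty=\langle w\mapsto w+1\rangle$. The inequality holds because the images $\delta^{-1}\F_n$ tile $\Hyp$, and $\sigma_p$ conjugates $\Gamma_p$ to $\Gamma_\infty$. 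This is the same computation as in Lemma~\ref{cusps and thin part}, now applied to every conjugate of every power of $\gamma_p$ at once.

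Multiplying by the bound on $m$ and summing over $p$ gives
$$\int_{\F_n(\upsilon)}\sharp\big(\Gamma_{n,\para}.z\cap\overline{B_R}(z)\big)\,dz\le\frac{2c_R\,\upsilon}{y(R)}\,q_n.$$
Dividing by $\vol(X_n)$, this tends to $0$ by Lemma~\ref{not too many cusps}.

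\textbf{Main obstacle.} The delicate step is the unfolding. I must check that distinct $(p,\delta\Gamma_p,m)$ give distinct group elements, so that there is no overcounting. This follows from uniqueness of the fixed point and from $\Gamma_p$ being the full stabilizer. I also need the height bound $\operatorname{Im}w\le\upsilon$ for $z\in\F_n(\upsilon)$, which follows from the decomposition (\ref{surface decomposition}). If that height control turns out to be awkward, a cruder fallback still suffices: bounding $|m|$ by a constant depending on $R$ and $\upsilon$ is enough, since only the linear dependence on $q_n$ matters.
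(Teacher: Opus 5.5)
Your proof is correct and follows the same route as the paper. You discard the hyperbolic part via condition 2) of Theorem~\ref{geometric plancherel thm}, bound the parabolic part by $C(R,\upsilon)\,q_n$ using the horoball-volume estimate behind Lemma~\ref{cusps and thin part} together with a pointwise bound on the number of admissible powers, and conclude with Lemma~\ref{not too many cusps}. The differences are minor but in your favour: you count the admissible $m$ directly from $\cosh d(w,w+m)=1+m^2/(2(\operatorname{Im}w)^2)$ instead of the paper's ball-packing argument with $d_\upsilon$. More importantly, you unfold over all conjugates $\delta\gamma_p^m\delta^{-1}$ with $\delta\in\Gamma_n/\Gamma_p$, whereas the paper writes the $\Gamma_{n,\para}$-count as a sum over $\la\gamma_p\ra$ alone and so silently omits these conjugates; your unfolding step handles them at no extra cost.
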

\begin{proof}
    As a consequence of Theorem \ref{geometric plancherel thm}, we only need to check that 
    $$\frac{1}{\vol X_n}\int_{\mathcal{F}_n(\upsilon)}\sharp \Big(\Gamma_{n,\para}.z\cap \overline{B_R}(z)\Big)dy\xrightarrow[]{n\to\infty}0.$$
    We reason with a fixed surface $X=\Gamma\setminus\Hyp$ with $q$ cusps. 
    For any $p\in\{1,\dots, q\}$, let $\gamma_p\in\Gamma$ be the generator of the stabilizer of the cusp $\ca_p$, so that 
    $$\int_{\mathcal{F}(\upsilon)}\sharp \Big(\Gamma_{\para}.z\cap \overline{B_R}(z)\Big)dy=\sum_{p=1}^q\int_{\mathcal{F}(\upsilon)}\sharp\Big(\la \gamma_p\ra.z\cap \overline{B_R}(z)\Big)dz.$$
    For any $z\in \mathcal{F}(\upsilon)$, there exists $d_\upsilon>0$ only depending on $\upsilon$ such that $d(z,\gamma_p.z)\ge d_\upsilon$. One can see this by conjugating $\gamma_p$ to $\left(\begin{smallmatrix}1 & 0\\0 & 1\end{smallmatrix}\right)$ and realizing that $\inf_{\operatorname{Im}(z)\le \upsilon} d(z,z+1)$ is bounded away from $0$. Therefore, for any $p\in\{1,\dots, q\}$ and $y\in \mathcal{F}(\upsilon)$ we have 
    $$\sharp\Big(\la \gamma_p\ra .y\cap \overline{B_R}(y)\Big)\le \frac{\vol B_{R+d_\upsilon/2}}{\vol B_{d_\upsilon/2}}.$$
    Moreover, as a consequence of Lemma \ref{cusps and thin part}, for any $p\in\{1,\dots,q\}$ the integral 
    $$\int_{\mathcal{F}(\upsilon)}\sharp\Big(\la \gamma_p\ra.z\cap \overline{B_R}(z)\Big)dz$$
    is supported on a set of measure $\le C_R$. Summarizing, 

    $$\int_{\mathcal{F_n}(\upsilon)}\sharp\Big(\Gamma_{n,\para}.z\cap \overline{B_R}(z)\Big)dz\le C_r\frac{\vol B_{R+d_\upsilon/2}}{\vol B_{d_\upsilon/2}}q_n.$$
    Since $\frac{q_n}{\vol X_n}\xrightarrow[]{n\to\infty}0$ the proof is complete.
    
\end{proof}

\begin{ex}\label{towers are plancherel}
    Towers of lattices are Plancherel. Let $\Gamma\le \PSL_2(\R)$ be a torsion free lattice and let $X=\Gamma\setminus\Hyp$ be the associated hyperbolic surface. A sequence $(\Gamma_n)_{n\in\N}$ of lattices in is called a \textit{tower of normal subgroups} of $\Gamma$ if:
    \begin{itemize}
        \item $\Gamma_n\trianglelefteq\Gamma$, that is $\Gamma_n$ is a normal subgroup of $\Gamma$, for every $n\in\N$;
        \item $[\Gamma:\Gamma_n]<\infty$;
        \item $\cap_{n\in\N} \Gamma_n=\{\operatorname{id}\}$.
    \end{itemize}
    We now verify that the two conditions of Theorem \ref{geometric plancherel thm} are satisfied, so that the sequence $(X_n)_{n\in\N}$ of hyperbolic surfaces $X_n=\Gamma_n\setminus\Hyp$ is Plancherel.
    
    Condition $1)$ is a direct consequence of the formula for the number of cusps under a normal covering presented in Lemma \ref{lem: n of cusps formula}. In fact, let $q$ be the number of cusps of $X$ and for every $p\in\{1,\dots,q\}$ let $\Gamma_p$ be the stabilizer of a representative for the $p$-th cusp. We have
    $$q_n=[\Gamma:\Gamma_n]\sum_{p=1}^q\frac{1}{[\Gamma_p:\Gamma_p\cap\Gamma_n]}.$$
    Since $\cap_{n\in\N}=\emptyset$, the quantity $[\Gamma_p:\Gamma_p\cap\Gamma_n]$ tends to infinity for any fixed $p$. We conclude by observing that $\vol(X_n)=[\Gamma:\Gamma_n]\vol (X)$.
    
    For condition $2)$, observe that for $n\in \N$ big enough and every $z\in\mathcal{F}_n$ we have $\sharp\big(\Gamma_{n,\hyp}.z\cap\overline{B_R}(z)\Big)=0$. The argument is the same as for the case of compact surfaces (see \cite[Lemma 2.1]{degeorge1978limit}). We summarize it here for the convenience of the reader. On the hyperbolic surface $X$ there are only finitely many closed geodesics with length $\le R$. A closed geodesic in $X$ corresponds to the conjugacy class of a (primitive) hyperbolic element in $\Gamma$. Let $S_R=\{\gamma_1,\dots,\gamma_m\}$ be representatives of such conjugacy classes. since $\cap_{n\in\N}\Gamma_n=\{\operatorname{id\}}$, for $n\in\N$ big enough $\Gamma_n\cap S_R=\emptyset$. Since the $\Gamma_n$'s are normal in $\Gamma$, if $\gamma_i\not\in\Gamma_n$ then no conjugate of $\gamma_i$ belongs to $\Gamma_n$. As a consequence, for $n\in\N$ big enough there are no closed geodesics in $X_n$ of length $\le R$, and in particular $\Gamma_{n,\hyp}.z\cap\overline{B_R}(z)=\emptyset$ for every $z\in\mathcal{F}_n$.

\end{ex}

\subsection{Plancherel convergence and admissible eigenvalue functions}

Our goal now is to extend the results of the previous section to all admissible eigenvalue functions. In order to do this, we recall some results concerning estimates on the number of closed geodesics with bounded length. \newline

\begin{propx}\label{bound on short geodesics}
Let $X$ be a hyperbolic surface of genus $g$ with $q$ cusps. 

\begin{enumerate}[(1)]
    \item If $\beta_1,\dots \beta_k$ are simple closed geodesics of length $\le 2\sinh^{-1}(1)$ on $X$, then $\beta_1,\dots\beta_k$ are disjoint and $k\le 3g-3+q$.
    \item Let $R>0$. Then there are at most $\operatorname{vol}(X)e^{R+6}$ oriented closed geodesics of length $\le R$ which are not iterates of closed geodesics of length $\le 2\sinh^{-1}(1)$.
\end{enumerate}
\end{propx}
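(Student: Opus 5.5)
Part (1) follows from the collar lemma, and part (2) from a packing argument in the thick part of $X$, with the thin part controlled by part (1).

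\emph{Part (1).} By the collar lemma (cf. Buser, Thm. 4.1.1 and Thm. 4.4.6), every simple closed geodesic $\beta$ of length $\ell$ has an embedded collar of width $w(\ell)=\sinh^{-1}(1/\sinh(\ell/2))$. If $\ell\le 2\sinh^{-1}(1)$, then $\sinh(\ell/2)\le 1$, so $w(\ell)\ge \sinh^{-1}(1)\ge \ell/2$.

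Suppose two such geodesics $\beta_1,\beta_2$ intersect transversally. Then $\beta_2$ must cross the whole collar of $\beta_1$, so $\ell(\beta_2)\ge 2w(\ell(\beta_1))\ge 2\sinh^{-1}(1)$. The same holds with the roles exchanged. The only borderline case is equality, and a standard strictness check (the collar being open) rules it out. Hence the $\beta_i$ are disjoint.

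Disjoint, pairwise non-homotopic simple closed geodesics on a surface of signature $(g,q)$ extend to a pants decomposition. A pants decomposition has exactly $3g-3+q$ curves, so $k\le 3g-3+q$.

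\emph{Part (2).} I would follow the argument of Buser, Lemma 6.6.4, adapted to cusps. Each oriented closed geodesic $c$ of length $\le R$ that is not an iterate of a short geodesic must enter the thick part. More precisely, it contains a point $p$ with $\operatorname{inj}(p)\ge r_0$ for a universal $r_0$ (say $r_0\sim\sinh^{-1}(1)/2$). The reason is that the thin part of $X$ is a disjoint union of collars around short geodesics and cusp neighbourhoods (Thm. 4.4.6). A closed geodesic contained in a collar is an iterate of its core. No closed geodesic lies entirely in a cusp region, since horocyclic neighbourhoods are convex-free of closed geodesics.

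Next, lift $p$ to $\tilde p\in\Hyp$. The geodesic $c$ then corresponds to a hyperbolic $\gamma$ with $d(\tilde p,\gamma\tilde p)\le R$. Distinct oriented geodesics passing through the same point correspond to distinct orbit points, so they can be counted this way.

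Take a maximal $r_0$-separated set $\{p_i\}$ in the thick part. The balls $B(p_i,r_0/2)$ are embedded and disjoint, so the number of points is at most $\vol(X)/\vol B_{r_0/2}$. Every such $c$ passes within $r_0$ of some $p_i$. It therefore yields an orbit point $\gamma\tilde p_i\in B(\tilde p_i,R+2r_0)$.

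The orbit points in that ball are at least $2r_0$ apart, because $\operatorname{inj}(p_i)\ge r_0$. Their number is therefore at most $\vol B_{R+3r_0}/\vol B_{r_0}\lesssim e^{R+3r_0}/r_0^2$. Multiplying the two bounds gives at most $C\,\vol(X)e^{R}$ oriented geodesics of this type.

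\emph{Main obstacle.} The delicate step is not the structure of the argument but tracking constants so that the total is at most $e^{6}$. This requires choosing $r_0$ explicitly and using $\vol B_r=2\pi(\cosh r-1)$. If the constants do not quite fit, I would instead cite Buser's Lemma 6.6.4 directly, as its proof carries over verbatim with the cusp thin parts added to the thick–thin decomposition.
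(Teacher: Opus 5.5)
Your proposal is correct and follows the same route as the paper. The paper proves (1) by citing the non-compact collar theorem (Buser, Thm.~4.4.6), and proves (2) by transferring Buser's counting argument for compact surfaces with the cusp regions added to the thin part; your packing argument is that argument written out, and with $r_0=\sinh^{-1}(1)$ it gives roughly $28\,\vol(X)e^{R}\le\vol(X)e^{R+6}$, so the constants do fit. One small point: the strict inequality in (1) should come from the corollary of the collar theorem, $\sinh(\ell(\beta_1)/2)\sinh(\ell(\beta_2)/2)>1$ for transversally intersecting closed geodesics one of which is simple, rather than from ``openness of the collar''.
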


\begin{proof}
    (1) follows directly from the Collar Theorem in the non-compact case (\cite[Thm. 4.4.6]{buser2010geometry}). The proof of (2) is analogous to the one for the case of compact surfaces \cite[Thm. 6.6.4]{buser2010geometry}, by using the Collar Theorem in the case of non-compact surfaces. 
\end{proof}

\begin{thm}\label{extended plancherel prop}
     A sequence $(X_n)_{n\in\N}$ of finite-area hyperbolic surfaces is Plancherel if and only if for any admissible eigenfunction $h$
    \begin{equation}\label{extended plancherel}
        \frac{1}{\operatorname{vol}(X_n)}\left(\sum_j h\left(r^{(n)}_j\right)+\frac{1}{4\pi}\int_\R h(r)\frac{-\varphi_{X_n}'}{\varphi_{X_n}}\left(\frac{1}{2}+ir\right)dr\right)\xrightarrow[]{n\to\infty}\mu_{\operatorname{Pl}}(h).
    \end{equation}

\end{thm}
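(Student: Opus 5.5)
One direction is immediate. If $k\in C^\infty_c(\R)_{\operatorname{ev}}$, then $h=\widehat{k}$ is an admissible eigenvalue function: it is entire, even, and by Paley--Wiener decays faster than any polynomial in every horizontal strip. So (\ref{extended plancherel}) for all admissible $h$ implies the Plancherel property. For the converse I will use the trace formula and reduce, as in Proposition \ref{first geometric plancherel interpretation}, to showing that for every admissible $h$ with Fourier transform $g$,
\[
\frac{1}{\operatorname{vol}(X_n)}\Big(\sum_{[\gamma]}\frac{l_{\gamma_0}}{2\sinh(l_\gamma/2)}g(l_\gamma)+C(g)\,q_n-\frac{h(0)}{4}\operatorname{Tr}\Phi_n(\tfrac12)\Big)\xrightarrow[]{n\to\infty}0,
\]
where $C(g)=g(0)\gamma_E-\int_0^\infty\log(\sinh\frac{r}{2})g'(r)dr$. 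The cusp terms are harmless. Admissibility ii)--iii) gives $|g(t)|,|g'(t)|\lesssim e^{-(1/2+\eps)|t|}$, so $C(g)$ is finite. Together with $|\operatorname{Tr}\Phi_n(1/2)|\le q_n$ and Lemma \ref{not too many cusps}, these contributions are $o(\operatorname{vol}X_n)$. Everything therefore reduces to the hyperbolic sum.

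For the hyperbolic sum I will truncate. Fix $R>0$ and a cutoff $\chi_R\in C_c^\infty(\R)_{\operatorname{ev}}$ with $0\le\chi_R\le1$ and $\chi_R=1$ on $[-R,R]$, and write $g=g\chi_R+g(1-\chi_R)$. The first piece $g\chi_R$ lies in $C^\infty_c(\R)_{\operatorname{ev}}$. By Remark \ref{plancherel convergence and closed geodesics} (the bijectivity of the Abel transform) together with the characterization (\ref{Plancherel condition on Abel transform}), its normalized hyperbolic sum tends to $0$. One caveat: $g$ is only as smooth as the decay iii) allows, namely $g\in C^1$ with some Hölder control. If this is an issue, I will either approximate $g\chi_R$ uniformly by smooth compactly supported functions and dominate the resulting error by the counting function of Theorem \ref{geometric plancherel thm} as in Lemma \ref{last plancherel lemma}, or note that (\ref{Plancherel condition on Abel transform}) extends by monotone domination to continuous compactly supported $g$, since $|g|\le\|g\|_\infty\,\eta$ for some smooth nonnegative $\eta$.

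The main obstacle is the tail: I need
\[
\sup_n\frac{1}{\operatorname{vol}(X_n)}\sum_{l_\gamma>R}\frac{l_{\gamma_0}}{\sinh(l_\gamma/2)}e^{-(1/2+\eps)l_\gamma}\xrightarrow[]{R\to\infty}0 .
\]
I will bound the number of closed geodesics of length in $[m,m+1]$ using Proposition \ref{bound on short geodesics}. Geodesics that are not iterates of short ones number at most $\operatorname{vol}(X_n)e^{m+7}$, and each contributes at most $l_{\gamma_0}e^{-(1+\eps)m}\lesssim (m+1)e^{-(1+\eps)m}$. Summing over $m\ge R$ gives $\operatorname{vol}(X_n)\sum_{m\ge R}(m+1)e^{7-\eps m}$, uniformly small in $n$. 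Iterates $\beta^j$ of short primitive geodesics $\beta$ (length $\le 2\sinh^{-1}1$) number at most $3g_n-3+q_n\lesssim\operatorname{vol}(X_n)$ by Gauss--Bonnet. For each such $\beta$ I bound
\[
\sum_{j:\,j l_\beta>R} l_\beta\, e^{-(1+\eps)jl_\beta}\lesssim\int_{R-1}^\infty e^{-(1+\eps)t}\,dt+l_\beta e^{-(1+\eps)R},
\]
treating the sum as a Riemann sum with mesh $l_\beta$. This holds uniformly even as $l_\beta\to0$, and gives a total contribution $\lesssim\operatorname{vol}(X_n)e^{-R}$. Hence the tail is $\le\delta(R)\operatorname{vol}(X_n)$ with $\delta(R)\to0$.

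To finish, for given $\epsilon$ I choose $R$ with $\delta(R)<\epsilon$ and then let $n\to\infty$ on the compactly supported piece. This yields (\ref{extended plancherel}) for every admissible $h$.
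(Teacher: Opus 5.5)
Your proposal is correct and follows the paper's proof. Both arguments reduce via the trace formula to the normalized hyperbolic sum, with the cusp terms killed by Lemma \ref{not too many cusps}. Both then split at a length cutoff, bounding the short-length part by $\|g\|_\infty$ times a sum controlled by the Plancherel property, and make the tail uniformly small in $n$ using Proposition \ref{bound on short geodesics}, with iterates of short geodesics treated separately. Your explicit check that $C(g)$ is finite for admissible $h$ and your Riemann-sum count of iterates make precise two steps the paper only sketches.
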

\begin{proof}
  $(\Leftarrow)$ If (\ref{extended plancherel}) is satisfied for any admissible eigenvalue function, then it holds for the eigenvalue function of any kernel $k\in C^\infty_c(\R)_{\operatorname{ev}}$.
    
$(\Rightarrow)$ Let $h$ be an admissible eigenvalue function. Selberg's trace formula is valid for $h$, so that following the same argument as in Proposition \ref{first geometric plancherel interpretation} and Remark \ref{plancherel convergence and closed geodesics}, we see that (\ref{extended plancherel}) holds if and only if 
    $$\frac{1}{\operatorname{vol}(X_n)}\int_{\mathcal{F}_n}\sum_{\Gamma_{n,\hyp}}k(z,\gamma. z)dz\xrightarrow[]{n\to\infty}0$$
    $$\iff \frac{1}{\operatorname{vol}(X_n)}\sum_{\Gamma_{n,\hyp}}\frac{l_{\gamma_0}}{2\sinh(l_\gamma/2)}g(l_\gamma)\xrightarrow[]{n\to\infty}0,$$
    where $k$ is the admissible kernel obtained by $h$ via inverse Selberg transform and $g$ is the Fourier transform of $h$. For each $N\in\N$ we can split the last sum above in two sums, one over geodesics with length $\le N$, one over geodesics with length $>N$:

    $$\frac{1}{\operatorname{vol}(X_n)}\left(\sum_{\substack{\Gamma_{n,\hyp} \\ l_\gamma\le N}}\frac{l_{\gamma_0}}{2\sinh(l_\gamma/2)}g(l_\gamma)+\sum_{\substack{\Gamma_{n,\hyp} \\ l_\gamma>N}}\frac{l_{\gamma_0}}{2\sinh(l_\gamma/2)}g(l_\gamma)\right)\le$$

    $$\left(\underbrace{\frac{||g||_\infty}{\operatorname{vol}(X_n)}\sum_{\substack{\Gamma_{n,\hyp} \\ l_\gamma\le N}}\frac{l_{\gamma_0}}{\sinh\left(\frac{l_\gamma}{2}\right)}}_{(\spadesuit)}\right)+\left(\underbrace{\frac{1}{\operatorname{vol}(X_n)}\sum_{\substack{\Gamma_{n,\hyp}\\ l_\gamma>N}}\frac{l_{\gamma_0}}{\sinh\left(\frac{l_\gamma}{2}\right)}g(l_\gamma)}_{(\heartsuit)}\right).$$

    We suppress the dependence of $(\spadesuit)$ and $(\heartsuit)$ on $n$ and $N$, to avoid the notation becoming too cumbersome.

    Since the sequence $(X_n)_{n\in\N}$ is assumed to be Plancherel, by Lemma \ref{last plancherel lemma} we get
    $$(\spadesuit)\xrightarrow[]{n\to\infty}0.$$
    We turn our attention to $(\heartsuit)$. We split the sum over $l_\gamma>N$ into sums of geodesics with length in the interval $[k,k+1]$ as $n>N$: 
    $$(\heartsuit)=\frac{1}{\operatorname{vol}(X_n)}\sum_{k=N}^\infty\sum_{\substack{\Gamma_{n,\hyp} \\ l_{\gamma\in[k,k+1)}}}\frac{l_{\gamma_0}}{2\sinh(l_\gamma/2)}g(l_\gamma).$$
    In order to estimate $(\heartsuit)$ we need some control on the decay of $g$. It follows from the decay conditions (\ref{admissible eigenvalue functions}) on $h$ that there exists $0<\varepsilon<1$ such that $g(t)\le C_\varepsilon e^{\big(-\frac{1}{2}-\varepsilon\big)|t|}$ for some positive constant $C_\varepsilon$. We use Theorem \ref{bound on short geodesics} to obtain the following upper bound for $(\heartsuit)$:

   $$\underbrace{C_\varepsilon\sum_{k=N}^\infty \frac{(k+1)e^{k+7}e^{\big(-\frac{1}{2}-\varepsilon\big)k}}{2\sinh(k/2)}}_{(I)}
        +\underbrace{\frac{1}{\operatorname{vol}(X_n)}\sum_{k=N}^\infty\sum_{\substack{\Gamma_{n,\hyp} \\ l_{\gamma_0}\le2\sinh^{-1}(1) \\ l_\gamma\in[k,k+1)}}\frac{l_{\gamma_0}}{2\sinh(l_\gamma/2)} g(l_\gamma)}_{(II)}.$$

We study the quantities $(I)$ and $(II)$ separately. 
$$(I)=C_\varepsilon e^7\sum_{k=N}^\infty \frac{(k+1)e^{\frac{k}{2}-\varepsilon k}}{e^{\frac{k}{2}}(1-e^{-k})}\le\frac{C_\varepsilon e^7}{1-e^{-1}}\sum_{k=N}^{\infty}(k+1)e^{-\varepsilon k}.$$

The series $\sum_{k=0}^{\infty}(k+1)e^{-\varepsilon k}$ converges, so that the tail $\sum_{k=N}^{\infty}(k+1)e^{-\varepsilon k}$ converges to $0$ as $N\to\infty$. In particular, 

$$(I)\xrightarrow[]{N\to\infty}0.$$

Concerning $(II)$, let $\{\gamma_1,\dots \gamma_{s(n)}\}$ be the collection of closed geodesics on $X_n$ with length $\le 2\sinh^{-1}(1)$. From Theorem \ref{bound on short geodesics}, we know $s(n)\le 3(g_n-1)+q_n$. Also, notice that for $i=1,\dots s(n)$ the quantity $\frac{1}{l_{\gamma_i}}$ bounds the number of iterates of $\gamma_i$ in any interval $[k,k+1)$. Hence 
\begin{equation*}
    \begin{split}
        (II)\le&\frac{1}{\text{vol}(X_n)}\sum_{k=N}^{\infty}\sum_{i=1}^{s(n)}\frac{1}{l_{\gamma_i}}\frac{l(\gamma_i)}{2\sinh(k/2)}g(k) \\
    \le &\frac{1}{\text{vol}(X_n)}\frac{1}{2\sinh(N/2)}\sum_{k=N}^{\infty}\sum_{i=1}^{s(n)}e^{\big(\frac{1}{2}-\varepsilon\big)k}\\
    \le &\frac{3g_n-3+q_n}{\text{vol}(X_n)}\frac{1}{2\sinh(N/2)}\sum_{k=N}^{\infty}e^{\big(\frac{1}{2}-\varepsilon\big)k} \\
    \le & \frac{1}{2\sinh(N/2)}\sum_{k=N}^{\infty}e^{\big(\frac{1}{2}-\varepsilon\big)k}\\
    =& O\left(\frac{1}{\sinh(N/2)}\right).
    \end{split}
\end{equation*}

Summarizing, for every $\delta>0$ there exists $N$ large enough, depending only on the admissible eigenvalue function $h$, such that 
$$(\heartsuit)<\delta \qquad \text{for every }n\in\N.$$

Therefore, 
$$\lim_{n\to\infty}\frac{1}{\operatorname{vol}(X_n)}\int_{\mathcal{F}_n}\sum_{\Gamma_{n,\hyp}} k(z,\gamma .z)dz<\delta\qquad \text{for every }\delta>0.$$
Letting $\delta\to0$ concludes the proof.

\end{proof}

\subsection{Asymptotic behavior of the Laplace spectrum}

Let $X$ be a hyperbolic surface with finite area. For a compact interval $I\subset[0,\infty)$, we denote by $N(X,I)$ the number of Laplace eigenvalues (from the discrete spectrum) on $X$ in the interval $I$, counted with multiplicities. Moreover, we write 
$$M(X,I)=\frac{1}{4\pi}\int_{\tau^{-1}(I)}\frac{-\varphi_{X}'}{\varphi_{X}}\left(\frac{1}{2}+is\right)ds.$$
 The sum $N(X,I)+M(X,I)$ measures the contribution of the discrete and continuous spectrum in the interval $I$. 
 
% The problem of understanding the asymptotic behavior of the Laplace spectrum has been extensively studied in relation to equidistribution of $L^2$-eigenfunctions and Eisenstein series.
For $T>0$, we write $I_T:=[\frac{1}{4},\frac{1}{4}+T^2]$. Weyl's asymptotic law (see e.g. \cite[Chap. 11]{iwaniec2021spectral}) asserts that
$$N(X,I_T)+M(X,I_T)=\frac{\vol (X)}{4\pi}T^2+O(T\log T)\qquad \text{as }T\to\infty.$$
On the other hand, if one wants to study the contribution of the Laplace spectrum in a fixed interval $I$, the idea is to consider a sequence of surfaces $(X_n)_{n\in\N}$ and to study $\frac{N(X_n,I)+M(X_n,I)}{\operatorname{vol}(X_n)}$ as $n\to\infty$. In the latter case, asking that $\operatorname{vol}(X_n)\to\infty$ does not seem to be enough to obtain meaningful asymptotics. Le Masson and Sahlsten have shown that, under the assumptions that a sequence is Benjamini--Schramm and there is a uniform lower bound on the systoles, it holds $N(X_n,I)+M(X_n,I)\sim\operatorname{vol}(X_n)$ (see \cite[Lemma 9.1]{le2017quantum} for a proof in the compact case and \cite[Thm. 5.1]{le2024quantum} for a proof in the non-compact case). The aim of this section is to show that the same asymptotic as in \cite{le2024quantum} can be obtained under the assumption that a sequence is Plancherel, dropping the assumption on the uniform bound on the systoles. In section \ref{bs and plancherel} we shall discuss precisely how Plancherel convergence relates to Benjamini--Schramm convergence.\newline

\begin{propx}[spectral convergence]\label{prop: spectral convergence}
    Let $(X_n)_{n\in\N}$ be a Plancherel sequence of finite area hyperbolic surfaces.
    Then for any compact interval $I\subset[0,\infty)$ we have  
    $$\frac{N(X_n,I)+M(X_n,I)}{\operatorname{vol}(X_n)}\xrightarrow[]{n\to\infty}\mu_{\operatorname{Pl}}(I),$$
    where $\mu_{\operatorname{Pl}}(I)=\frac{1}{4\pi}\int_\R \1_I\left(\frac{1}{4}+s^2\right)\tanh(\pi s)sds.$
\end{propx}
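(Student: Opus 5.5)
We argue by approximating the indicator function $\1_I\circ\tau$ from above and below by admissible eigenvalue functions, and then apply Theorem \ref{extended plancherel prop}. Consider the measures on $[0,\infty)$ (equivalently even measures on $\R$ in the spectral parameter $s$)
$$\nu_n:=\frac{1}{\vol(X_n)}\Big(\sum_j\delta_{s_j^{(n)}}+\frac{1}{4\pi}\frac{-\varphi_{X_n}'}{\varphi_{X_n}}\big(\tfrac12+is\big)ds\Big).$$
For small eigenvalues $\lambda_j<\frac14$ we have $s_j\in i(0,\frac12]$. By Theorem \ref{extended plancherel prop}, $\nu_n(h)\to\mu_{\Pl}(h)$ for every admissible $h$. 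The goal is to show $\nu_n(\1_I\circ\tau)\to\mu_{\Pl}(I)$.

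The first step is positivity. The discrete part of $\nu_n$ is clearly a positive measure. The continuous density $-\varphi'/\varphi(\frac12+is)$ is not pointwise nonnegative in general, but by the Maass–Selberg relations it equals $\sum_k\|$truncated $E_k(\cdot,\frac12+is)\|^2$ minus a term bounded by $O(q_n)$ times a function with at most polynomial growth in $s$. Since $q_n/\vol(X_n)\to0$ by Lemma \ref{not too many cusps}, this error vanishes after normalization on compacts. A more standard route avoids this: write $-\varphi'/\varphi(\frac12+is)=\sum_\rho\frac{2\operatorname{Re}(\rho)-1}{|\frac12+is-\rho|^2}+2\log b$ over the poles $\rho$ with $\operatorname{Re}\rho<\frac12$, using the standard factorization of $\varphi$ from Iwaniec, Ch. 11. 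This is nonnegative up to the constant $2\log b$. I expect this step to be the main obstacle. What must be established is that $\nu_n$ is, up to an error $o(1)$ on compacts, a positive measure with uniformly bounded mass on compacts, so that sandwiching applies.

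Next comes the approximation. Given $\delta>0$, choose admissible even functions $h_\delta^-\le\1_I\circ\tau\le h_\delta^+$ on $\R\cup i[-\frac12,\frac12]$ such that $\mu_{\Pl}(h^+_\delta-h^-_\delta)<\delta$. For example, mollify slightly enlarged or shrunken indicators of $\tau^{-1}(I)$ and multiply by Gaussians $e^{-s^2}$-type factors so that they become entire with the required decay. The condition on the imaginary segment matters for small eigenvalues. For $h^+$ one adds a small multiple of $e^{-(s^2-\dots)}$ to dominate there, which is harmless since $\mu_{\Pl}$ gives no mass near $s\in i\R$. Alternatively, take $h^\pm=\widehat{k^\pm}$ with $k^\pm\in C_c^\infty$ and use Paley–Wiener: $\widehat{k}$ is then real and bounded on the strip, and positivity-preserving Beurling–Selberg-type majorants exist. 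Then
$$\nu_n(h^-_\delta)-o(1)\le\nu_n(\1_I\circ\tau)\le\nu_n(h^+_\delta)+o(1).$$
Letting $n\to\infty$ gives $\limsup$ and $\liminf$ within $\delta$ of $\mu_{\Pl}(I)$, and then we let $\delta\to0$.

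Finally, consider intervals touching or contained in $[0,\frac14]$. There $\mu_{\Pl}$ has zero mass on $i\R$, and the same sandwich with $h^-=0$ shows the normalized count of small eigenvalues tends to $0$. The remaining part follows as above.
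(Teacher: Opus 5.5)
Your reduction has the same skeleton as the paper's proof: apply Theorem~\ref{extended plancherel prop} to admissible test functions, then pass to $\1_I\circ\tau$. The paper does this only for the heat kernels $h_t(s)=e^{-t(1/4+s^2)}$. It then imports the passage to intervals verbatim from Le Masson--Sahlsten, whose systole hypothesis enters only through heat-trace convergence. You instead carry out that passage yourself by sandwiching. That rests entirely on the step you flag, approximate positivity of $\nu_n$, and you do not establish it. Both routes you sketch break down at the bottom $s=0$ of the continuous spectrum.

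With truncation height $Y\ge1$, the Maa\ss--Selberg relations give only
\[
-\frac{\varphi_X'}{\varphi_X}\Big(\frac12+is\Big)\ge-2q\log Y-\frac{q}{|s|}.
\]
This error is not integrable at $s=0$; it is not ``$O(q_n)$ times a polynomially growing function''. Because $h^\pm$ are analytic, $h^\pm-\1_I\circ\tau$ cannot vanish near $s=0$, so this bound does not control $\nu_n(h^+-\1_I\circ\tau)$. For $I\subset(\frac14,\infty)$, the case used later, you can rescue it by forcing $h^\pm=O(s^4)$ at $s=0$. That rescue fails when $\frac14$ is an endpoint of $I$ (e.g.\ $I=[0,\frac14]$), which the statement includes. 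There $\1_I\circ\tau$ takes different values on $\R$ and on $i\R$ near $0$. No smooth test function can then separate eigenvalues accumulating just below $\frac14$ from negative continuous mass near $s=0$.

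In that regime the negativity is genuine, and your factorization omits its source. Summing over all poles $\rho$ of $\varphi_X$,
\[
-\frac{\varphi_X'}{\varphi_X}\Big(\frac12+is\Big)=2\log b+\sum_{\operatorname{Re}\rho<\frac12}\frac{1-2\operatorname{Re}\rho}{|\frac12+is-\rho|^2}-\sum_{\sigma_j\in(\frac12,1]}\frac{2\sigma_j-1}{(\sigma_j-\frac12)^2+s^2}.
\]
Your numerator has the wrong sign, and you dropped the last sum. Each of its terms has total mass $2\pi$ but height $2/(\sigma_j-\frac12)$ at $s=0$. This is unbounded as a residual eigenvalue $\sigma_j(1-\sigma_j)$ approaches $\frac14$. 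Your final paragraph is therefore circular: taking $h^-=0$ presupposes positivity, while the negative mass is controlled only by the number of small eigenvalues.

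The missing idea is to pair each such pole with its residual eigenvalue $s_j=i(\sigma_j-\frac12)$ in the discrete sum. This works because the pole order of $\varphi_X$ at $\sigma_j$ is at most the residual multiplicity. Since $h_t(s_j)\ge\sup_\R h_t$, each pair contributes at least $\frac12h_t(s_j)$ to $\vol(X_n)\nu_n(h_t)$. All other pieces are nonnegative, since $b\ge1$ for the cusp normalization of Section~\ref{cusps}. Hence $\limsup_n N(X_n,[0,\frac14))/\vol(X_n)\le2e^{t/4}\mu_{\Pl}(h_t)\to0$ as $t\to\infty$. It follows that $\nu_n$ is positive up to a signed error of total mass at most $N(X_n,[0,\frac14))/(2\vol(X_n))=o(1)$. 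Only with this in hand does your sandwich close.
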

\begin{proof}

In \cite[Theorem 5.1]{le2024quantum}, the uniform lower bound on systoles is only used to prove that for each $t>0$
\begin{equation*}
    \begin{split}
        \lim_{n\to\infty}&\frac{1}{\operatorname{vol}(X_n)}\left(\sum_j e^{-t\lambda_j^{(n)}}+\frac{1}{4\pi}\int_\R \frac{-\varphi'_{X_n}}{\varphi_{X_n}}\left(\frac{1}{2}+is\right)e^{-t(1/4+s^2)}ds\right)\\
        =&\frac{1}{4\pi}\int_\R e^{-t(1/4+s^2)}\tanh(\pi s)sds.
    \end{split}
\end{equation*}
Since the sequence $(X_n)_{n\in\N}$ is Plancherel and $h_t(r)=e^{-t(1/4+r^2)}$ is admissible (it is the eigenvalue function of the heat kernel), we can conclude that the above limit holds thanks to Proposition \ref{extended plancherel prop}. For this, we do not need the assumption of a uniform lower bound on the systoles. For the rest of the proof, one can reproduce verbatim the argument in \cite[Theorem 5.1]{le2017quantum}.

\end{proof}

\subsection{Comparison with Benjamini--Schramm convergence}

\subsubsection*{Benjamini--Schramm convergence}

Let $X=\Gamma\setminus\Hyp$ be a hyperbolic surface with finite area. For $p\in X$, we define the \textbf{injectivity radius} at $p$ as 
$$\operatorname{inj}_X(p)=\frac{1}{2}\inf\{d(z,\gamma .z)\ :\ \gamma\in\Gamma\smallsetminus\{\text{id}\}\},$$
where $z$ is any lift of $p$ to $\Hyp$. For $R>0$, we define the $R$-\textit{thin part} of $X$ as 
$$X_{<R}=\{ p\in X\ :\ \operatorname{inj}_X(p)<R\}.$$

We say that a sequence $(X_n)_{n\in\N}$ of finite-area hyperbolic surfaces is \textbf{Benjamini--Schramm convergent} to $\Hyp$ if for any $R>0$
$$\frac{\operatorname{vol}((X_n)_{<R})}{\operatorname{vol}(X_n)}\xrightarrow[]{n\to\infty}0.$$

\begin{propx}\label{bs characterization}
     A sequence $(X_n)_{n\in\N}$ of finite-area hyperbolic surfaces is Benjamini--Schramm convergent if and only if

      \begin{itemize}
        \item $\frac{q_n}{\operatorname{vol}(X_n)}\xrightarrow{n\to\infty}0$, and
        \item for all $R>0$
         \begin{equation}\label{geometric BS}
        \frac{1}{\operatorname{vol}(X_n)}\int_{\F_n}\operatorname{sign}\left(\sharp\left(\Gamma_{n,\hyp}.z\cap \overline{B_R}(z)\right)\right)dz\xrightarrow[]{n\to\infty}0,
    \end{equation}
    \end{itemize}
\end{propx}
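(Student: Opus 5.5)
The plan is to split the thin part $(X_n)_{<R}$ into the part near the cusps and the part where a hyperbolic element moves points a short distance, and to handle each piece separately. For $z\in\F_n$, $\operatorname{inj}(z)<R$ means some $\gamma\in\Gamma_n^\star$ satisfies $d(z,\gamma.z)<2R$. Since $X_n$ is torsion free, $\gamma$ is either parabolic or hyperbolic. Hence, pointwise on $\F_n$,
$$\1_{(X_n)_{<R}}(z)\le \operatorname{sign}\Big(\sharp\big(\Gamma_{n,\hyp}.z\cap\overline{B_{2R}}(z)\big)\Big)+\operatorname{sign}\Big(\sharp\big(\Gamma_{n,\para}.z\cap\overline{B_{2R}}(z)\big)\Big).$$
Conversely, the hyperbolic indicator at radius $R$ is bounded by $\1_{(X_n)_{<R}}$.

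\emph{($\Leftarrow$)} Integrate the pointwise bound over $\F_n$. The hyperbolic term, divided by $\vol(X_n)$, tends to $0$ by (\ref{geometric BS}) at radius $2R$. For the parabolic term, every parabolic element of $\Gamma_n$ is conjugate to a power $\gamma_p^m$ of a cusp generator, and the displacement $d(w,w+m)$ is increasing in $|m|$. So if some parabolic element moves $z$ less than $2R$, then the corresponding generator $\gamma_p$ does as well (after replacing $z$ by a $\Gamma_n$-translate, which does not change the point of $X_n$). The parabolic part of the thin set is therefore contained in the union over $p$ of the images in $X_n$ of $\{z:d(z,\gamma_p.z)<2R\}$. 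By Lemma \ref{cusps and thin part}, each of these has volume at most $C_{2R}$, so the total is at most $C_{2R}q_n$. Since $q_n/\vol(X_n)\to0$, this term vanishes as well.

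\emph{($\Rightarrow$)} The inequality $\operatorname{sign}(\sharp(\Gamma_{n,\hyp}.z\cap\overline{B_R}(z)))\le\1_{(X_n)_{<R'}}(z)$ for any $R'>R/2$ gives (\ref{geometric BS}) directly. The remaining task is to show $q_n/\vol(X_n)\to0$, and I expect this to be the main obstacle. The idea is that each cusp carries a definite amount of thin area. In the model $\cu(1)$ modulo $z\mapsto z+1$, the region $\{\operatorname{Im}w\ge y_0\}$, where $y_0$ is chosen so that $d(w,w+1)<2R$ for $\operatorname{Im}w\ge y_0$, has area $1/y_0$ and lies in the $R$-thin part. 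Taking $R$ large, say $R=1$, gives $y_0\le 1$, so by \cite[Thm. 4.4.6]{buser2010geometry} these cuspidal zones $X_p(\max(y_0,1))$ are pairwise disjoint and embedded. This yields $\vol((X_n)_{<R})\ge q_n\,c_R$ with $c_R=1/\max(y_0,1)>0$. Dividing by $\vol(X_n)$ and using Benjamini--Schramm convergence gives $q_n/\vol(X_n)\to0$. The point requiring care is the embeddedness of the cuspidal zones, which I will justify by the disjointness statement for $\upsilon\ge1$ recalled in Section \ref{cusps}.
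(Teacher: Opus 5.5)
Your proof is correct and is essentially the paper's argument. For $(\Leftarrow)$ you split the thin part into a hyperbolic piece controlled by (\ref{geometric BS}) and a parabolic piece of volume at most $C\,q_n$ via Lemma \ref{cusps and thin part}; for $(\Rightarrow)$ you use the disjoint embedded cuspidal zones to get $\vol((X_n)_{<R})\gtrsim q_n$, taking $X_p(1)$ where the paper takes Buser's neighbourhoods $\mathscr{C}^*$.

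Your treatment of arbitrary parabolic elements is in fact more careful than the paper's. You conjugate them to $\gamma_p^m$, use monotonicity of $d(w,w+m)$ in $|m|$, and bound the image in $X_n$ of the horoball $\{d(z,\gamma_p.z)<2R\}$ by its area in $\la\gamma_p\ra\setminus\Hyp$; you also keep track of the factor $2$ in $\operatorname{inj}$. The paper, by contrast, only integrates $\sharp(\gamma_k^{(n)}.z\cap\overline{B_R}(z))$ over $\F_n$ for the generators themselves.
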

\begin{proof}
     $(\Rightarrow)$ Let $\{\ca^{(n)}_1,\dots,\ca_q^{(n)}\}$ be the the set of cusps of $X_n$. According to \cite[section 4.4]{buser2010geometry}, to any cusp corresponds a cuspidal zone $\mathscr{C}^*$ in $X_n$ isometric to $(-\infty,\log 2]\times\mathcal{S^1}$, with metric $d\rho^2+e^{2\rho}dt^2$. Moreover, a point in such cuspidal zone has injectivity radius $\le\sinh^{-1}(1)$. The fact that $\frac{q_n}{\operatorname{vol}(X_n)}\xrightarrow[]{n\to\infty}0$ is then immediate, for otherwise 
     $$\frac{\operatorname{vol}((X_n)_{<R})}{\operatorname{vol}(X_n)}\ge \operatorname{vol}(\mathscr{C}^*)\frac{q_n}{\operatorname{vol}(X_n)}\not\xrightarrow[]{}0$$
     for any $R>\sinh^{-1}(1).$ Now notice that Benjamini--Schramm convergence for the sequence $(X_n)_{n\in\N}$ can be reformulated as 
     $$\frac{1}{\operatorname{vol}(X_n)}\int_{\F_n}\operatorname{sign}\left(\sharp\left(\Gamma_n^\star.z\cap \overline{B_R}(z)\right)\right)dz\xrightarrow[]{n\to\infty}0.$$
     Therefore, 
     $$\int_{\F_n}\operatorname{sign}\left(\sharp\left(\Gamma_{n,\hyp}.z\cap \overline{B_R}(z)\right)\right)dz\xrightarrow[]{n\to\infty}0,$$
     as in the latter integral we are considering less lattice points. 

     $(\Leftarrow)$ For each $k=1,\dots,q$, let $\gamma_k^{(n)}\in\Gamma_n$ be as defined in section \ref{cusps}. Notice that
     $$\int_{\F_n}\operatorname{sign}\left(\sharp\left(\Gamma_n^\star.z\cap \overline{B_R}(z)\right)\right)dz\le$$
     $$\sum_{k=1}^{q_n}\int_{F_n}\sharp\left(\gamma_k^{(n)}(z)\cap \overline{B_R}(z)\right)dz+\int_{\F_n}\operatorname{sign}\left(\sharp\left(\Gamma_{n,\hyp}.z\cap \overline{B_R}(z)\right)\right)dz.$$
     Therefore,
     \begin{equation*}
         \begin{split}
             &\lim_{n\to\infty}\frac{1}{\operatorname{vol}(X_n)}\int_{\F_n}\operatorname{sign}\left(\sharp\left(\Gamma_n^\star.z\cap \overline{B_R}(z)\right)\right)dz\le\\
             &\lim_{n\to\infty}\frac{1}{\operatorname{vol}(X_n)}\sum_{k=1}^{q_n}\int_{F_n}\sharp\left(\gamma_k^{(n)}(z)\cap \overline{B_R}(z)\right)dz.
         \end{split}
     \end{equation*}
    Next, by Lemma \ref{cusps and thin part} there exists a constant $C_R$ such that for each $k=1,\dots,q_n$ 
    $$\int_{F_n}\sharp\left(\gamma^{(n)}_k(z)\cap \overline{B_R}(z)\right)dz\le C_R.$$
    Therefore,
     \begin{equation*}
         \begin{split}
             &\lim_{n\to\infty}\frac{1}{\operatorname{vol}(X_n)}\int_{\F_n}\operatorname{sign}\left(\sharp\left((\Gamma_n\smallsetminus\{\text{Id}\})z\cap \overline{B_R}(z)\right)\right)dz\le\\
             &\lim_{n\to\infty}\frac{1}{\operatorname{vol}(X_n)}\sum_{k=1}^{q_n}C_R=\lim_{n\to\infty}\frac{q_n}{\operatorname{vol}(X_n)}C_R=0.  \qquad\qquad \qedhere
         \end{split}       
     \end{equation*}
   
\end{proof}

\subsubsection*{Benjamini--Schramm and Plancherel convergence}\label{bs and plancherel}

\begin{propx}\label{PL vs BS}
    Let $(X_n)_{n\in\N}$ be a sequence of finite-area hyperbolic surfaces. Then 
    \begin{enumerate}[a)]
        \item If $(X_n)_{n\in\N}$ is Plancherel convergent, then $(X_n)_{n\in\N}$ is Benjamini--Schramm convergent to $\Hyp$.
        \item If $(X_n)_{n\in\N}$ is Benjamini--Schramm convergent to $\Hyp$ and there is a uniform lower bound on the systole of $X_n$, then $(X_n)_{n\in\N}$ is Plancherel convergent.
    \end{enumerate}
\end{propx}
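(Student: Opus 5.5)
Both parts will be derived by comparing the geometric characterizations already established: Theorem \ref{geometric plancherel thm} for Plancherel sequences, and Proposition \ref{bs characterization} for Benjamini--Schramm sequences. Both characterizations contain the condition $\frac{q_n}{\operatorname{vol}(X_n)}\to 0$. Hence the whole comparison reduces to comparing, for each fixed $R>0$, the two averages
$$P_n(R):=\frac{1}{\operatorname{vol}(X_n)}\int_{\F_n}\sharp\left(\Gamma_{n,\hyp}.z\cap \overline{B_R}(z)\right)dz,\qquad B_n(R):=\frac{1}{\operatorname{vol}(X_n)}\int_{\F_n}\operatorname{sign}\left(\sharp\left(\Gamma_{n,\hyp}.z\cap \overline{B_R}(z)\right)\right)dz.$$

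For a), assume $(X_n)$ is Plancherel. Theorem \ref{geometric plancherel thm} gives $\frac{q_n}{\operatorname{vol}(X_n)}\to0$ and $P_n(R)\to 0$ for every $R$. Since $\operatorname{sign}(m)\le m$ for every integer $m\ge 0$, we have $0\le B_n(R)\le P_n(R)$ pointwise in the integrand, so $B_n(R)\to 0$. Proposition \ref{bs characterization} then yields Benjamini--Schramm convergence. This part is immediate.

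For b), assume Benjamini--Schramm convergence and $\operatorname{sys}(X_n)\ge \ell>0$ for all $n$. Proposition \ref{bs characterization} gives $\frac{q_n}{\operatorname{vol}(X_n)}\to0$ and $B_n(R)\to0$. By Theorem \ref{geometric plancherel thm} it suffices to show $P_n(R)\to 0$. This follows if the integrand $\sharp(\Gamma_{n,\hyp}.z\cap\overline{B_R}(z))$ is bounded by a constant $C(R,\ell)$ independent of $n$ and $z$, because then $P_n(R)\le C(R,\ell)\,B_n(R)$.

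The uniform bound is the main step, and I would prove it with a packing argument, as in the compact case \cite{deitmar2019benjamini}. For distinct hyperbolic $\gamma,\gamma'\in\Gamma_n$ we have $d(\gamma.z,\gamma'.z)=d(z,\gamma^{-1}\gamma'.z)$. The element $\gamma^{-1}\gamma'$ is nontrivial, but it may be parabolic, so the systole alone does not bound this distance from below uniformly in $z$. In particular, $z$ could lie deep in a cusp.

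To handle this, I would first try the following. Every element of $\Gamma_{n,\hyp}$ moving $z$ by at most $R$ has translation length in $[\ell,R]$, and its axis passes within a distance $D(R)$ of $z$. Such axes project to closed geodesics of length $\le R$ entering a ball of bounded radius around the projection of $z$, and the ball $B_{D(R)}(z)$ meets only finitely many lifts of these axes.

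I would then bound the number of such closed geodesics by combining Proposition \ref{bound on short geodesics}(2) with the lower bound on the systole. For each axis, the number of powers of its primitive element with length $\le R$ is at most $R/\ell$. The number of distinct axes through a bounded ball is controlled by a Margulis/collar-type argument: a closed geodesic of length $\le R$ cannot penetrate arbitrarily deep into a cusp, since it stays below height $\sim e^{R}$ in standard cusp coordinates. Geodesics hitting a fixed ball also have bounded multiplicity of lifts, because lifts of a closed geodesic of length $\ge\ell$ through a ball of radius $D$ are separated by the stabilizer action.

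If this direct counting becomes messy, a fallback is to note that the integrand is nonzero only when $z$ lies in the region of $X_n$ within distance $D(R)$ of some closed geodesic of length $\le R$. That region stays in a fixed compact part $\F_n(\upsilon_R)$, away from the cusps. There the injectivity radius is bounded below by a constant depending on $\ell$ and $\upsilon_R$, since the systole bound controls hyperbolic elements and the height bound controls parabolic ones. Orbit points are then $2c$-separated, and a volume comparison gives $\sharp\le \vol B_{R+c}/\vol B_c$.
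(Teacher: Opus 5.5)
Your proposal is correct and follows the paper's route. Part a) is identical, and for part b) the paper also reduces, via Theorem~\ref{geometric plancherel thm} and Proposition~\ref{bs characterization}, to a pointwise bound of $\sharp\big(\Gamma_{n,\hyp}.z\cap\overline{B_R}(z)\big)$ by a constant times its sign. The paper, however, only asserts that bound, and it writes the systole bound $\eps$ itself where a constant $C(R,\eps)$ is needed. It also ignores the clustering of orbit points caused by parabolic elements deep in the cusps, which you correctly flag.

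Your fallback argument supplies exactly the missing justification, and it is the version you should write up; the axis-counting first attempt is unnecessary. The integrand vanishes outside a fixed compact part $X_n(\upsilon_R)$: for instance, by Shimizu's lemma, $d(z,\gamma.z)\ge 2\log\operatorname{Im}z$ in a normalized cusp chart for every $\gamma$ outside the cusp stabilizer. On that compact part the injectivity radius is bounded below in terms of $\ell$ and $\upsilon_R$, and the packing bound then finishes the argument.
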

\begin{proof}
    (a) Since $(X_n)_{n\in\N}$ is Plancherel, then $\frac{q_n}{\operatorname{vol}(X_n)}\xrightarrow[]{n\to\infty}0$. Moreover, for each $n\in\N$
    $$\frac{1}{\operatorname{vol}(X_n)}\int_{\F_n} \operatorname{sign}\left(\sharp\left(\Gamma_{n,\hyp}.z\cap \overline{B_R}(z)\right)\right)dz\le\frac{1}{\operatorname{vol}(X_n)}\int_{\F_n} \sharp\left(\Gamma_{n,\hyp}.z\cap \overline{B_R}(z)\right)dz.$$
    By Theorem \ref{geometric plancherel thm}, the latter quantity tends to $0$ as $n\to\infty$. By Proposition \ref{bs characterization}, the sequence is Benjamini--Schramm convergent to $\Hyp$.

    (b) Since $(X_n)_{n\in\N}$ is Benjamini--Schramm convergent to $\Hyp$, then $\frac{q_n}{\vol(X_n)}$ tends to 0 as $n\to\infty$. Let $\eps>0$ such that $\operatorname{sys}(X_n)>\eps$ for each $n\in\N$. Then 
    $$\frac{1}{\operatorname{vol}(X_n)}\int_{\F_n} \sharp\left(\Gamma_{n,\hyp}.z\cap \overline{B_R}(z)\right)dz\le\frac{\eps}{\operatorname{vol}(X_n)}\int_{\F_n} \operatorname{sign}\left(\sharp\left(\Gamma_{n,\hyp}.z\cap \overline{B_R}(z)\right)\right)dz.$$
    By Proposition \ref{bs characterization}, the latter quantity tends to $0$ as $n\to\infty$. By Theorem \ref{geometric plancherel thm}, the sequence is Benjamini--Schramm convergent.
\end{proof}

\begin{rem}
    There exist Plancherel sequences with no uniform lower bound on the systoles and Benjamini--Schramm sequences which are not Plancherel sequences. Examples can be constructed as in \cite{gavelli2025benjamini}, by keeping some pairs of cusps for each surface.
\end{rem}

\section{Quantum Ergodicity and Automorphic Kernels}\label{quantum ergodicity and smooth kernels}

This section is dedicated to the proof of the Quantum Ergodicity Theorem.

\subsection{Notation}\label{setting}

Let $\Gamma\le G$ be a torsion free lattice, let $X$ be the associated hyperbolic surface and let $\cu(X)=\{\ca_1,\dots,\ca_q\}$ be the set of cusps of $X$. For $s\in\R$ and $p\in\{1,\dots, q\}$ we write
    $$E_p(s):= E_{\ca_p}\left(\cdot,\frac{1}{2}+is\right).$$

Let $k:\Hyp\times\Hyp\to\C$ be a bounded measurable kernel with the following properties:
\begin{align*}
    &1)\ k(\gamma.x,\gamma.y)=k(x,y) \text{  for every } \gamma\in\Gamma\qquad &(\Gamma\text{-invariance})\\
    &2)\ \exists\ M>0\ s.t.\ k(x,y)=0 \text{ if } d(x,y)> M &(\text{finite propagation)}  
\end{align*}

Such a kernel gives rise to an operator $T:C(\Hyp)\to C(\Hyp)$ by convolution

$$Tf(x):=\int_\Hyp k(x,y)f(y) dy$$

that descends by $\Gamma$-invariance to an operator on $C(X)$: if $g\in C(X)$, 
$$Tg(x)=\int_X \sum_{\gamma\in\Gamma} k(x,\gamma .y) g(y)dy.$$

For $s\in[0,\infty)$, we denote 
$$\la T\ra _{s}:=\frac{1}{\vol(X)}\int_\mathcal{F}\int_\Hyp k(x,y)\phi_s(d(x,y))dxdy,$$
where $\mathcal{F}$ is a fundamental domain for $X$ in $\Hyp$.

We will later see in section \ref{radialization of kernels} that this quantity corresponds to the spherical transform of a radialization of the kernel $K$ evaluated at the spectral parameter $s$. The next step is to define the \textbf{quantum mean absolute deviation} over $I$ of the Laplace eigenfunctions with respect to the operator $T$. First we introduce some more notation: 

$$\mathcal{D}_{X,I}(T):=\sum_{j\ \colon \lambda_j\in I}|\la T\psi_j,\psi_j\ra-\la T \ra_{s_j}|,$$
$$\mathcal{C}_{X,I}(T):=\frac{1}{4\pi}\int_{\tau^{-1}(I)}\left|\sum_{p=1}^q\la TE_p(s),E_p(s)\ra +\frac{\varphi_X'}{\varphi_X}\left(\frac{1}{2}+is\right)\la T\ra_{s}\right| ds,$$

where $\lambda_j=\frac{1}{4}+s_j^2$. The contribution of the discrete and the continuous spectra is, respectively
$$N(X,I):= \sharp\{j\ \colon \lambda_j\in I\},$$
$$M(X,I):=\frac{1}{4\pi}\int_{\tau^{-1}(I)}\frac{-\varphi_X'}{\varphi_X}\left(\frac{1}{2}+is\right)ds.$$

The \textbf{quantum mean absolute deviation} over $I$ of the Laplace eigenfunctions with respect to the operator $T$ is 
$$\operatorname{Dev}_{X,I}(T):=\frac{\mathcal{D}_{X,I}(T)+\mathcal{C}_{X,I}(T)}{N(X,I)+M(X,I)}.$$

\begin{defn}\label{uniform relative compact support}

Let $\pi:\Hyp\to X$ be the quotient projection. For $\upsilon\ge 1$, recall the decomposition of $X$ as a disjoint union of a compact surface and cuspidal zones

$$X=X(\upsilon)\cup\bigcup_{p=1}^qX_p(\upsilon).$$

We say that a $\Gamma$-invariant kernel $k:\Hyp\times\Hyp\to\C$ has \textbf{relative compact support} (or compact support relative to the cusp decomposition) if there exists $\upsilon \ge 1$ such that $k(x,y)=0$ whenever $x\not\in \pi^{-1}X(\upsilon)$ or $y\not\in \pi^{-1}X(\upsilon)$. In this case we also say that $k$ has relative compact support in $X(\upsilon)$. Notice that $k$ having relative compact support in $X(\upsilon)$ is equivalent to requiring that the automorphic kernel $\textbf{k}(x,y):=\sum_{\gamma\in\Gamma} k(x,\gamma .y)$ on $X\times X$ is such that $\textbf{k}(x,y)=0$ whenever $x\not\in X(\upsilon)$ or $y\not\in X(\upsilon)$.

If $(\Gamma_n)_{n\in\N}$ is a sequence of torsion free lattices and $(k_n)_{n\in\N}$ is a sequence of $\Gamma_n$-invariant kernels, we say that the latter has \textbf{uniformly relative compact support} if there exists $\upsilon\ge1$ such that for every $n\in\N$ the kernel $k_n$ has relative compact support in $X_n(\upsilon)$.

\end{defn}

Now that all the necessary notions have been introduced, we state again the Quantum Ergodicity Theorem (and number it for further reference). 

\begin{thm}\label{QE theorem}
    Let $I\subset (1/4,\infty)$ be a compact interval and $(X_n)_{n\in\N}$ a sequence of finite area hyperbolic surfaces. Denote by $q_n$ the number of cusps of $X_n$. Assume that 
    \begin{enumerate}
        \item $(X_n)_{n\in\N}$ is a Plancherel sequence;
        \item there is a uniform spectral gap for the Laplacian on $X_n$;
        \item $\frac{q_n^2}{\vol (X_n)}\xrightarrow[]{n\to\infty}0$.
    \end{enumerate}
        
     Then, for any  sequence of uniformly bounded measurable kernels $(k_n)_{n\in\N}$ with uniformly relative compact support and uniformly finite propagation,

    $$\operatorname{Dev}_{X_n,I}(T_n)\xrightarrow[]{n\to\infty}0.$$
\end{thm}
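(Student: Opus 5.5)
The plan follows the Le Masson--Sahlsten scheme from \cite{le2024quantum}, adapted to kernels as in \cite{abert2022eigenfunctions}. Plancherel convergence replaces the systole bound wherever geometric counting enters.

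\textbf{Step 1: normalize by volume.} By Proposition \ref{prop: spectral convergence}, $N(X_n,I)+M(X_n,I)\sim \mu_{\Pl}(I)\vol(X_n)$, and $\mu_{\Pl}(I)>0$ because $I\subset(1/4,\infty)$. It therefore suffices to show
$$\frac{\mathcal{D}_{X_n,I}(T_n)+\mathcal{C}_{X_n,I}(T_n)}{\vol(X_n)}\to 0.$$

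\textbf{Step 2: reduce to spheres.} Disintegrate $k_n$ in the radial variable, writing $k_n(x,y)=\int_0^M k_n^{(r)}(x,y)\,dr$. Here $k_n^{(r)}$ is the distributional kernel supported on $\{d(x,y)=r\}$, i.e.
$$T_n^{(r)}f(x)=\sinh r\int_{\mathbb{S}^1}k_n(x,(r,\theta))f(r,\theta)\,d\theta$$
in polar coordinates at $x$. Both $\la T\psi,\psi\ra$ and $\la T\ra_s$ are linear in the kernel, and the integrand is bounded by $\|k_n\|_\infty\sinh M$. By Minkowski it is then enough to prove the normalized deviation bound for each sphere operator $T_n^{(r)}$, uniformly in $r\in[0,M]$, and integrate over $r$. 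For such an operator, $\la T^{(r)}\ra_s=\phi_s(r)\,\langle a^{(r)}\rangle$, where $a^{(r)}(x)=\sinh r\int k(x,(r,\theta))\,d\theta$. The multiplication case of Theorem \ref{thm to extend} is the degenerate case $r=0$.

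\textbf{Step 3: wave propagation.} After subtracting the mean of $a^{(r)}$ (a harmless term), use eigenfunction invariance under the wave/averaging operators. Choose a family of radial kernels $P_t$ of propagation $t$, built from the spherical averages $A_\rho f(x)=\fint_{S_\rho(x)}f$ and normalized by $\phi_s(\rho)$, with spectral multiplier $\approx 1$ on $\tau^{-1}(I)$; this is possible by the results of Section \ref{selberg and abel transform}. For an eigenfunction we have $\la T\psi_j,\psi_j\ra=\la T A\psi_j,A\psi_j\ra/|\widehat{A}(s_j)|^2$, and the same identity holds for $E_p(s)$. So we may replace $T$ by $P_t^*TP_t$. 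Then apply Cauchy--Schwarz, together with the fact that the discrete plus continuous spectral projection over $I$ is bounded by an admissible $h$ (Theorem \ref{extended plancherel prop}). This bounds the normalized deviation by
$$\frac{1}{\vol X_n}\big\|P_t^*T^{(r)}P_t\big\|_{HS(X_n(\upsilon'))}^2$$
plus the Eisenstein boundary terms. The Eisenstein terms are handled exactly as in \cite{le2024quantum}: Maass--Selberg relations and uniformly relative compact support, with an error $O(q_n^2/\vol X_n)$, which vanishes by hypothesis 3.

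\textbf{Step 4: the main obstacle.} The Hilbert--Schmidt norm of the automorphic kernel $\sum_\gamma K_t(x,\gamma y)$ on $X_n(\upsilon')$ has to be split into the $\gamma=\mathrm{id}$ part and the rest. For the identity part, the lifted kernel $K_t$ lives on $\Hyp$. Exponential mixing on $\Hyp$, i.e. the spectral gap in the form of the $\phi_s$ decay of spherical averages together with the mean-zero property of $a$, gives
$$\|K_t\|_{HS}^2\lesssim \vol(X_n)\,\|a\|_\infty^2\,/\,t$$
(or an exponential bound). The uniform spectral gap is used here and in the continuous spectrum terms. For the rest, the nonidentity terms are supported where $\Gamma_n^\star z\cap \overline{B_{2t+M}}(z)\neq\emptyset$ and are bounded by $C(t)\,\sharp(\Gamma_n^\star z\cap \overline{B_{2t+M}}(z))$. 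On $\F_n(\upsilon')$ this is $o(\vol X_n)$ by Lemma \ref{lem: Plancherel and parabolic contribution}. This is where Plancherel replaces injectivity radius: we must control the count with multiplicity, not merely its support, because short geodesics produce many orbit points. Fixing $t$, letting $n\to\infty$, and then letting $t\to\infty$ concludes the proof.
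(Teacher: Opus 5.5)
\textbf{There is a genuine gap in Steps 3--4.} The mechanism that makes the Hilbert--Schmidt norm small is missing, and with your choice of $P_t$ the claimed bound cannot hold.

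Suppose $|\widehat{P_t}|\ge 1-\eps$ on $\tau^{-1}(I)$. Since $\la P_t^*T^{(r)}P_t\psi_j,\psi_k\ra=\widehat{P_t}(s_j)\overline{\widehat{P_t}(s_k)}\la T^{(r)}\psi_j,\psi_k\ra$, you get
\[
\|P_t^*T^{(r)}P_t\|_{HS}^2\ \ge \sum_{\lambda_j,\lambda_k\in I}|\la P_t^*T^{(r)}P_t\psi_j,\psi_k\ra|^2\ \ge\ (1-\eps)^4\sum_{\lambda_j,\lambda_k\in I}|\la T^{(r)}\psi_j,\psi_k\ra|^2 .
\]
The right-hand side contains all off-diagonal matrix elements inside the window. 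These are not small on average, since quantum ergodicity only concerns the diagonal. For instance, for a fixed mean-zero multiplication operator pulled back along a tower of covers this sum is of order $\vol(X_n)$, and the lower bound does not improve as $t$ grows. So neither a $1/t$ nor an exponential bound for $\|K_t\|_{HS}^2/\vol(X_n)$ is possible.

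The missing idea is the Ces\`aro time average used in the paper, following \cite{le2017quantum,le2024quantum}:
\begin{itemize}
\item One takes the ball averages $P_t$ with kernel $\rho_t=e^{-t/2}\1_{[0,t]}$. Their multipliers $\widehat{\rho_t}$ oscillate and are not bounded below pointwise.
\item One works with $\mathbf{F}_{T,r}=\frac1T\int_0^T P_tT_rP_t\,dt$. On the diagonal only the averaged bound $\frac1T\int_0^T|\widehat{\rho_t}(s)|^2dt\ge C_I$ on $\tau^{-1}(I)$ is needed, while averaging $\widehat{\rho_t}(s_j)\widehat{\rho_t}(s_k)$ in $t$ suppresses off-diagonal terms.
\item Geometrically, the kernel of $\mathbf{F}_{T,r}$ at distance $l$ is an average of $k$ over a lens $L_t(l,r)$ with $|L_t|=O(e^{t+r-l/2})$. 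Nevo's theorem gives a factor $|L_t|^{-\theta}$, and $\frac1T\int_{(l-r)/2}^T e^{-t}|L_t|^{1-\theta}dt\lesssim e^{-l/2}/(T\theta)$.
\item Squaring and integrating against $\sinh l\,dl$ over $[0,2T+r]$ yields $1/(T\theta^2)$.
\end{itemize}
Without the time average, i.e.\ at a single $t$, the lens degenerates for $l\approx 2t$ and the same computation gives only $O(1/\theta)$.

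\textbf{Two smaller points.} Steps 1, 2, the Cauchy--Schwarz/Maa\ss--Selberg treatment, and the counting of non-identity orbit points do match the paper.
\begin{itemize}
\item The cancellation in the identity term does not come from mixing on $\Hyp$. Nor does it come from $\phi_s$-decay of spherical averages of the $\theta$-integrated function $a^{(r)}$ on $X_n$, which does not determine $T^{(r)}$. It comes from the uniform spectral gap of $X_n$ via Nevo's theorem on $L^2(X_n\times\mathbb{S}^1)$. That theorem is applied to the non-$K$-invariant function $a_r(z,\theta)=k(z,\exp_z(r,\theta))$, whose mean is $[k](r)$.
\item The mean subtraction is not harmless. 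Because $A_rE_p(s)=\phi_s(r)E_p(s)$ with $E_p(s)\notin L^2$, you cannot subtract a global radial operator. You must subtract a cut-off one such as $\chi_n[T_n]$, and this leaves the deviation of the multiplication operator by $\chi_n$. The paper handles that term by the multiplication-operator theorem of \cite{le2024quantum}, arguing that Plancherel convergence can replace the systole bound there.
\end{itemize}
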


This Quantum Ergodicity Theorem can be interpreted as follows: the two points correlation function of the eigenfunctions converges, on average, to the two-point correlation function of the spherical function with the same spectral parameter.

\subsection{Radialization of kernels}\label{radialization of kernels}

% A smooth kernel $k:\Hyp\times\Hyp\to\C$ is said to be a \textbf{radial kernel} if it only depends on the distance between two points, that is there exists a smooth even function $u:[0,\infty)\to\C$ such that $k(x,y)=u(d(x,y))$ for every $x,y\in\Hyp$. 

For this section, we fix a torsion free lattice $\Gamma\le G$ and a measurable $\Gamma$-invariant kernel $k$ with finite propagation. Following \cite{abert2022eigenfunctions}, we define a function $[k]:[0,\infty)\to\C$ by taking radial averages of $k$:

$$[k](r)=\fint_X\fint_{\partial B_r(x)}k(x,y)dA(y)dx,$$

where $dA$ is the area measure on the boundary, which for $r>0$ is the $1$-dimensional Hausdorff measure induced by the restriction of the hyperbolic metric to the boundary. For $r=0$ it is the Dirac measure supported on $\{x\}$. 

In polar coordinates based at $x$, we can reformulate the above integral as 

% $$[k](r)=\fint_{\Gamma\setminus \Hyp}\int_K K(x,xka_r)d_k dx=\fint_{\Gamma\setminus G}K(x,xa_r)dx$$

% or equivalently as

$$[k](r):=\fint_\mathcal{F}\fint_{\mathbb{S}^1} k(x,\exp_x(r,\theta))d\theta dx.$$
Here $\mathcal{F}$ is a fundamental domain of $X$ in $\Hyp$, 

The function $[k]$ induces a $G$-invariant kernel on $\Hyp\times\Hyp$, which by abuse of notation we still denote $[k]$, by $[k](x,y):=[k](d(x,y))$. 
We denote the operator by convolution with the radial kernel $[k]$ by $[T]:C(X)\to C(X)$,

$$[T]f(x):=\int_\Hyp [k](x,y)f(y) dy.$$

Notice that the radialization procedure we performed above is idempotent, that is $[[k]]=[k]$, in fact 

\begin{align*}
    [[k]](r)=&\fint_X\fint_{\partial B_r(x)}[k](d(x,y))dA(y)dx \\
     =&\fint_X\fint_{\partial B_r(x)} [k](r) dA(y)ydx \\
     =& [k](r)\fint_X\fint_{\partial B_r(x)} dA(y)ydx\\
     =&[k](r).
\end{align*}

Therefore, $[[T]]=[T]$. 

Recall that any Laplace eigenfunction on $X$ of eigenvalue $\lambda=\frac{1}{4}+s^2$ is an eigenfunction of any operator by convolution with a kernel $k\in L^\infty(\R_+)$ with compact support, with eigenvalue given by the spherical transform $\hat k$ evaluated at $s$ (Remark \ref{spherical transform for non smooth kernels}).

\begin{lemma}\label{properties of radial kernel}
    If $\psi_j$ is a Laplacian eigenfunction with eigenvalue $\lambda_j=\frac{1}{4}+s_j^2$ normalized to be of norm one, then
    $$\la [T]\psi_j,\psi_j\ra=\widehat{[k]}(s_j).$$
    Moreover, for any $s\in[0,\infty)$, 
    $$\widehat{[k]}(s)=\la T\ra _s.$$
\end{lemma}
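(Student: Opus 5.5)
The first claim follows from the fact that $[k]$ is a bounded, compactly supported radial kernel. Indeed $|[k]|\le\|k\|_\infty$, and $[k](r)=0$ for $r>M$ by finite propagation. By Remark \ref{spherical transform for non smooth kernels}, every Laplace eigenfunction with eigenvalue $\frac14+s^2$ is then an eigenfunction of $[T]$ with eigenvalue $\widehat{[k]}(s)$. Concretely, lift $\psi_j$ to a $\Gamma$-invariant eigenfunction on $\Hyp$. The integral $\int_\Hyp [k](x,y)\psi_j(y)dy$ converges absolutely because the support is compact, and (\ref{invariant integral eigenvalue}) gives $[T]\psi_j=\widehat{[k]}(s_j)\psi_j$. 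Pairing with $\psi_j$ and using $\|\psi_j\|=1$ yields $\la [T]\psi_j,\psi_j\ra=\widehat{[k]}(s_j)$.

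For the second claim I would compute directly from the definitions. Start from
$$\widehat{[k]}(s)=\int_0^\infty [k](r)\phi_s(r)\sinh r\,dr$$
and insert the polar-coordinate form of $[k]$:
$$[k](r)=\frac{1}{\vol(X)}\int_\F\frac{1}{2\pi}\int_{\mathbb S^1}k(x,\exp_x(r,\theta))\,d\theta\, dx.$$
Everything is bounded and the $r$-integration runs only over $[0,M]$, so Fubini applies and I may move the $x$-integral outside. For fixed $x$, the remaining integral $\int_0^\infty\int_{\mathbb S^1}k(x,\exp_x(r,\theta))\phi_s(r)\sinh r\,d\theta\, dr$ is, by (\ref{eq: polar coordinates}), exactly $\int_\Hyp k(x,y)\phi_s(d(x,y))\,dy$, since $r=d(x,y)$. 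This gives $\widehat{[k]}(s)=\frac{1}{\vol(X)}\int_\F\int_\Hyp k(x,y)\phi_s(d(x,y))\,dy\,dx=\la T\ra_s$.

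The only delicate point is the normalization of the angular measure. The average $\fint_{\mathbb S^1}$ carries a factor $1/2\pi$, whereas (\ref{eq: polar coordinates}) uses $d\theta$ with total mass $2\pi$. So I have to fix one convention (for instance total angular mass $1$, with the area density written accordingly) and apply it consistently both in the definition of the spherical transform and in the definition of $[k]$, so that the constants match. Beyond that, I only need to check that the $\Gamma$-invariance of $k$ makes the $x$-integral over $\F$ independent of the choice of fundamental domain. That is immediate, since $x\mapsto\int_\Hyp k(x,y)\phi_s(d(x,y))\,dy$ is $\Gamma$-invariant.
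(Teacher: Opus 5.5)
Your proposal is correct and is essentially the paper's proof: the first identity comes from the eigenvalue property of the bounded, compactly supported radial kernel $[k]$ (Remark \ref{spherical transform for non smooth kernels}) together with $\|\psi_j\|=1$, and the second from inserting the definition of $[k]$, applying Fubini, and using polar coordinates (\ref{eq: polar coordinates}). Your caution about the angular normalization is justified: the paper's computation passes from $\fint_{\mathbb{S}^1}$ to (\ref{eq: polar coordinates}) while silently dropping a factor $2\pi$, and both identities hold exactly once the spherical transform is normalized as the full-area integral $\int_\Hyp u(d(o,y))\,\phi_s(d(o,y))\,dy$ (for a base point $o\in\Hyp$), i.e.\ as the true eigenvalue of the convolution operator.
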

\begin{proof} Let $\psi_j$ be an eigenfunction with eigenvalue $\lambda_j=\frac{1}{4}+s_j^2$ normalized to be of norm one. Then
    \begin{equation}\label{spherical transform and eigenfunctions}
\begin{split}
    \la [T]\psi_j,\psi_j\ra&=\int_X([T]\psi_j)(x)\overline{\psi_j}(x)dx \\
    &=\int_X\widehat{[k]}(s_j)|\psi_j(x)|^2dx \\
    &=\widehat{[k]}(s_j) ||\psi_j||_2^2 \\
    &= \widehat{[k]}(s_j). 
\end{split}
\end{equation}

On the other hand, 

\begin{equation}\label{spherical transform and average}
\begin{split}
    \widehat{[k]}(s)&=\int_0^\infty [k](r)\phi_s(r)\sinh(r) dr\\
    &=\int_0^\infty\left(\fint_\mathcal{F}\fint_{\mathbb{S}^1} k(x,\exp_x(r,\theta) d\theta dx\right)\phi_s(r)\sinh(r) dr\\
    &=\fint_\mathcal{F}\left(\int_0^\infty\fint_{\mathbb{S}^1} k(x,\exp_x(r,\theta))\phi_{s}(r)\sinh(r)d\theta dr\right) dx\\
    &=\fint_\mathcal{F}\int_\Hyp k(x,y)\phi_s(d(x,y))dy dx\\
    &=\la T \ra_s.
    \end{split}
\end{equation}

In the penultimate step we consider $f_x(r,\theta):=k(x,\exp_x(r,\theta))$ and use formula (\ref{eq: polar coordinates}) for the integral in polar coordinates based at $x$. 
\end{proof}

Next we show that the $L^\infty$-norm of the spherical transform of the radialized kernel $[k]$ is controlled by the $L^\infty$-norm of the kernel $k$ itself.

\begin{lemma}\label{bound on infty norm}

    $$\Big|\Big|\widehat{[k]}\Big|\Big|_{L^{\infty}([0,\infty)}\le \sinh(M)\big|\big|k\big|\big|_{L^\infty(X\times X)}.$$

\end{lemma}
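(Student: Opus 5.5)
Work directly from the defining formula of the spherical transform,
$$\widehat{[k]}(s)=\int_0^\infty [k](r)\phi_s(r)\sinh(r)\,dr .$$
The plan has three ingredients: the support of $[k]$, a pointwise bound on $[k]$, and a uniform bound on the spherical functions $\phi_s$ for real $s\ge0$.

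\textbf{Support.} By finite propagation, $k(x,y)=0$ whenever $d(x,y)>M$. So for $r>M$ every point $\exp_x(r,\theta)$ lies at distance $r>M$ from $x$, and the inner average defining $[k](r)$ vanishes. Hence $[k](r)=0$ for $r>M$, and the integral above runs only over $[0,M]$.

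\textbf{Pointwise bound on $[k]$.} Since $[k](r)$ is a double average (over $\mathcal F$ normalized by $\vol(X)$, and over $\mathbb{S}^1$ normalized), we get $|[k](r)|\le \|k\|_{L^\infty}$ for every $r$. One point to check is that the $L^\infty$ norm on $X\times X$ in the statement refers to $\sup|k(x,y)|$ for the $\Gamma$-invariant kernel on $\Hyp\times\Hyp$. For $x\in\mathcal F$ and $y=\exp_x(r,\theta)$ these are exactly the values being averaged, so this interpretation is the one the bound needs.

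\textbf{Bound on $\phi_s$.} For real $s$ the spherical function has the integral representation $\phi_s(r)=\fint_{K}(\operatorname{Im} k\cdot a_r i)^{1/2+is}\,dk$. Since $|y^{1/2+is}|=y^{1/2}$, this gives $|\phi_s(r)|\le\phi_0(r)$. Moreover $\phi_0$ is the average of the positive function $y^{1/2}$, and it is a positive decreasing function with $\phi_0(0)=1$; equivalently, Harish-Chandra's bound gives $|\phi_s(r)|\le\phi_0(r)\le 1$. Here I use the standard normalization $\phi_s(0)=1$; the paper's ``$\phi_s(e)=0$'' is presumably a typo for this.

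Combining the three steps,
$$|\widehat{[k]}(s)|\le \|k\|_\infty\int_0^M\sinh r\,dr=\|k\|_\infty(\cosh M-1).$$
It remains to compare $\cosh M-1$ with $\sinh M$. Since $\cosh M-1\le \sinh M$ for all $M\ge0$ (equivalently $e^{-M}\le 1$), the stated bound $\sinh(M)\|k\|_\infty$ follows, uniformly in $s\in[0,\infty)$.

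\textbf{Main obstacle.} The only non-elementary input is the uniform bound $|\phi_s|\le1$ on the tempered range $s\in[0,\infty)$. I would justify it either via the integral representation above, or by citing the fact that $\phi_s$ is a positive-definite function with $\phi_s(0)=1$, and positive-definite functions are bounded by their value at the identity.
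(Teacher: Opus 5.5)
Your proposal is correct and follows the paper's proof essentially step for step. It uses the same three ingredients: $[k]$ vanishes beyond the propagation bound $M$, the double average gives $|[k](r)|\le\|k\|_\infty$, and $|\phi_s(r)|\le 1$ holds uniformly for $s\ge 0$. Your explicit estimate $\int_0^M\sinh r\,dr=\cosh M-1\le\sinh M$ and your justification of $|\phi_s|\le 1$ (under the correct normalization $\phi_s(0)=1$) simply spell out two steps the paper leaves implicit.
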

\begin{proof}
    Notice that $[k](r)=0$ whenever $r\ge M$ and that
    $$\Big|[k](r)\Big|\le\fint_X\fint_{B_r(x)}\Big| k(x,y)\Big| dxdy\le \big|\big| k\big|\big|_{L^{\infty}(X\times X)}.$$
    Therefore,
    $$\Big|\widehat{[k]}(s)\Big|\le\int_0^M \Big|[k](r)\phi_s(r)\sinh(r)\Big|dr\le\big|\big|k\big|\big|_{L^\infty(X\times X)}\sinh(M).$$
    The result follows by noticing that the RHS does not depend on $s$. We also used that the spherical functions satisfy the bound $|\phi_s(r)|\le 1$ for all $r,s\ge 0$.
\end{proof}

\subsection{Reduction to radial average zero operators}

The goal of this section is to show that it is enough to prove Theorem \ref{QE theorem} for operators such that $\la T_n \ra_s=0$ for every $s\in [0,\infty)$ (see Proposition \ref{average zero proposition}). As a consequence of  (\ref{spherical transform and average}), this is equivalent to requiring that the kernels $k_n$ are such that $\widehat{[k_n]}=0$, which in turn is equivalent (by injectivity of the spherical transform) to $[k_n]=0$. We call a kernel $k$ with the property that $[k]=0$ a \textbf{radial average zero kernel}. Analogously, an operator with integral kernel a radial average zero kernel is called a \textbf{radial average zero operator}. This step is analogous to the reduction to mean zero test functions in \cite{le2024quantum}. In the following, we fix a compact interval $I\subset(\frac{1}{4},\infty)$, a $\Gamma$-invariant measurable kernel $k:\Hyp\times\Hyp\to\C$ with finite propagation and we let $T$ be the operator on $C(X)$ induced by $k$. For $\upsilon\ge1$, define $\chi=\chi_\upsilon$ by 
\begin{equation}\label{chi}
\chi(x):=\begin{cases}
        \frac{\operatorname{vol}(X)}{\vol(X(\upsilon))},\qquad &x\in X(\upsilon)\\
        0, & \text{otherwise}
    \end{cases}
    \end{equation}
and 
$$T_\chi:=T-\chi[T],$$
where $(\chi[T])f(x):=\chi(x)[T]f(x)$. Moreover, by abuse of notation\footnote{$\chi$ is not an operator with a measurable kernel. However, the quantities $\mathcal{D}_{X,I}(\chi)$ and  $\mathcal{C}_{X,I}(\chi)$ are, respectively, the contribution of $L^2$-eigenfunctions and Eisenstein series to the absolute mean absolute deviation of $X$ over $I$ with respect to the operator of multiplication by $\chi$, which is the case dealt with in \cite{le2024quantum}. Notice that $\chi$ has mean $\la \chi\ra=1$ over $X$.}, we write 
$$\mathcal{D}_{X,I}(\chi):=\sum_{j \colon \lambda_j\in I} |\la \chi\psi_j,\psi_j\ra -1|,$$
$$\mathcal{C}_{X,I}(\chi):=\frac{1}{4\pi}\int_{\tau^{-1}(I)}\left|\sum_{p=1}^q\la \chi E_p(s),E_p(s)\ra+\frac{\varphi_X'}{\varphi_X}\left(\frac{1}{2}+is\right)\right|ds.$$

Moreover, notice that the kernel of $T_\chi$ is     $$k_\chi(x,y)=k(x,y)-\chi(x)[k](x,y),$$
so that $[k_\chi]=0$ and 
\begin{equation}\label{zero average auxiliary operator}
    \la T_\chi\ra _s=\widehat{[k_\chi]}(s)=0 
\end{equation}
for every $r\in[0,\infty)$.

\begin{lemma}\label{averaging discrete and continuous contribution}
    Let $X=\Gamma\setminus\Hyp$ be a hyperbolic surface with finite area, consider a measurable $\Gamma$-invariant kernel $k:\Hyp\times \Hyp\to\C$ with finite propagation $M$ and let $T:C(X)\to C(X)$ be the operator by convolution with $k$. Fix $t\ge0$ and let $\chi=\chi_t$ as in (\ref{chi}). Then  
    $$\mathcal{D}_{X,I}(T)\le\mathcal{D}_{X,I}(T_\chi)+\sinh(M)||k||_\infty\mathcal{D}_{X,I}(\chi)$$
    and 
    $$\mathcal{C}_{X,I}(T)\le\mathcal{C}_{X,I}(T_\chi')+\sinh(M)||k||_{\infty}\mathcal{C}_{X,I}(\chi),$$
\end{lemma}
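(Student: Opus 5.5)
The plan is to split $T=T_\chi+\chi[T]$ inside each term of $\mathcal{D}_{X,I}(T)$ and $\mathcal{C}_{X,I}(T)$, apply the triangle inequality, and use that eigenfunctions and Eisenstein series are eigenfunctions of the radial operator $[T]$. For the discrete part, fix $j$ with $\lambda_j\in I$. By Remark~\ref{spherical transform for non smooth kernels}, $[T]\psi_j=\widehat{[k]}(s_j)\psi_j$, since $[k]$ is bounded and supported in $[0,M]$. Hence
$$\la \chi[T]\psi_j,\psi_j\ra=\widehat{[k]}(s_j)\la\chi\psi_j,\psi_j\ra .$$
By Lemma~\ref{properties of radial kernel}, $\la T\ra_{s_j}=\widehat{[k]}(s_j)$, and by (\ref{zero average auxiliary operator}), $\la T_\chi\ra_{s_j}=0$. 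Therefore
$$\la T\psi_j,\psi_j\ra-\la T\ra_{s_j}=\big(\la T_\chi\psi_j,\psi_j\ra-\la T_\chi\ra_{s_j}\big)+\widehat{[k]}(s_j)\big(\la\chi\psi_j,\psi_j\ra-1\big).$$
Taking absolute values, bounding $|\widehat{[k]}(s_j)|\le\sinh(M)\|k\|_\infty$ by Lemma~\ref{bound on infty norm}, and summing over $j$ gives the first inequality.

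For the continuous part, the same decomposition applies to each Eisenstein series $E_p(s)$. It is an eigenfunction of $\Delta$ with eigenvalue $\frac14+s^2$, so $[T]E_p(s)=\widehat{[k]}(s)E_p(s)$; this holds pointwise because $[k]$ has compact support, and no $L^2$ assumption is needed. The pairings $\la TE_p(s),E_p(s)\ra$ and $\la \chi E_p(s),E_p(s)\ra$ are finite only because the operators involved are compactly supported relative to the cusps: $\chi$ is supported in $X(\upsilon)$, and this should be the standing assumption on $k$ for these quantities to make sense. The pairings are then absolutely convergent integrals over a compact set, and linearity is legitimate. We obtain
$$\sum_p\la TE_p,E_p\ra+\frac{\varphi'}{\varphi}\la T\ra_s=\Big(\sum_p\la T_\chi E_p,E_p\ra+\frac{\varphi'}{\varphi}\la T_\chi\ra_s\Big)+\widehat{[k]}(s)\Big(\sum_p\la\chi E_p,E_p\ra+\frac{\varphi'}{\varphi}\Big),$$
again using $\la T_\chi\ra_s=0$ and $\la T\ra_s=\widehat{[k]}(s)$. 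Applying the triangle inequality, the uniform bound from Lemma~\ref{bound on infty norm}, and integrating over $\tau^{-1}(I)$ with the factor $\frac1{4\pi}$ yields the second inequality, with $T_\chi'$ read as $T_\chi$.

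The main obstacle is justifying the eigenfunction identity $[T]E_p(s)=\widehat{[k]}(s)E_p(s)$ for the non-$L^2$ Eisenstein series and a merely bounded measurable kernel $[k]$. I would first verify it for smooth compactly supported radial kernels (the classical result \cite[1.14]{iwaniec2021spectral} for any eigenfunction on $\Hyp$, with no decay needed since the support is compact). I would then pass to $[k]$ by the dominated convergence approximation in Remark~\ref{spherical transform for non smooth kernels}, using local boundedness of $E_p(\cdot,\frac12+is)$ on balls of radius $M$. The remaining bookkeeping, such as the sign convention in $\mathcal{C}$ and using $\la\chi\ra=1$ in $\mathcal{D}_{X,I}(\chi)$, is routine.
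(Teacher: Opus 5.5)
Your proposal is correct and is essentially the paper's proof: decompose $T=T_\chi+\chi[T]$, use $[T]\psi_j=\widehat{[k]}(s_j)\psi_j$ (and the same for $E_p(s)$), $\la T\ra_s=\widehat{[k]}(s)$ and $\la T_\chi\ra_s=0$, then conclude with the triangle inequality and Lemma~\ref{bound on infty norm}. Your extra remarks are accurate and fill in details the paper leaves implicit. First, the Eisenstein pairings only make sense under relative compact support of $k$, which the lemma omits. Second, $T_\chi'$ should read $T_\chi$. Third, the eigenvalue identity for the non-$L^2$ series $E_p(s)$ needs a word of justification; circle-averaging gives it directly for bounded compactly supported radial kernels, so your approximation step is optional.
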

\begin{proof}
    If $\psi_j$ is a Laplace eigenfunction with eigenvalue $\lambda_j=\frac{1}{4}+s_j^2$ of $L^2$-norm $1$,
    \begin{align*}
        \la T\psi_j,\psi_j\ra-\la T\ra_{s_j}&=\la (T_\chi+\chi[T])\psi_j,\psi_j\ra-\widehat{[k]}(s_j)\\
        &=\la T_\chi\psi_j,\psi_j\ra +\widehat{[k]}(s_j)\Big(\la \chi\psi_j,\psi_j\ra-1\Big).
    \end{align*}

    Hence,
    \begin{align*}
        \mathcal{D}_{X,I}(T)&=\sum_{j\ \colon \lambda_j\in I}\bigg|\la T\psi_j,\psi_j\ra -\la T\ra_{s_j}\bigg|\\
        &\le\sum_{j\ \colon \lambda_j\in I} \bigg|\la T_\chi\psi_j,\psi_j\ra \bigg| + \Big|\Big|\widehat{[k]}\Big|\Big|_\infty\sum_{j\ \colon \lambda_j\in I} \bigg|\la \chi\psi_j,\psi_j\ra-1\bigg| \\
        &\le\mathcal{D}_{X,I}(T')+\sinh(M)||k||_\infty\mathcal{D}_{X,I}(\chi).
    \end{align*}
Where we used that $T_\chi$ is a radial average zero operator (\ref{zero average auxiliary operator}) and the bound in the last step is the content of Lemma \ref{bound on infty norm}.
We proceed similarly for the continuous part:

\begin{align*}
    &\mathcal{C}_{X,I}(T)=\mathcal{C}_{X,I}(T_\chi+\chi[T]) \\
    &=\frac{1}{4\pi}\int_{\tau^{-1}(I)}\left|\sum_{p=1}^q\la (T_\chi+\chi[T])E_p(s),E_p(s)\ra+\frac{\varphi_X'}{\varphi_X}\left(\frac{1}{2}+is\right)\la T\ra_s\right|ds\\
    &\le \mathcal{C}_{X,I}(T_\chi)+\frac{1}{4\pi}\int_{\tau^{-1}(I)}\left|\sum_{p=1}^q\la \chi[T] E_p(s),E_p(s)\ra +\frac{\varphi_X'}{\varphi_X}\left(\frac{1}{2}+is\right)\la T\ra_s\right|ds\\
    &=\mathcal{C}_{X,I}(T_\chi)+\frac{1}{4\pi}\int_{\tau^{-1}(I)}\Big|\widehat{[k]}(s)\Big|\left|\sum_{p=1}^q\la \chi E_p(s),E_p(s)\ra +\frac{\varphi_X'}{\varphi_X}\left(\frac{1}{2}+is\right)\right| ds\\
    &\le \mathcal{C}_{X,I}(T_\chi)+\Big|\Big|\widehat{[k]}\Big|\Big|_\infty\mathcal{C}_{X,I}(\chi)\\
    &\le \mathcal{C}_{X,I}(T_\chi)+\sinh(M)||k||_\infty\mathcal{C}_{X,I}(\chi).
\end{align*}

Where we used that $T_\chi$ is a radial average zero operator (\ref{zero average auxiliary operator}) and the bound in the last step is the content of Lemma \ref{bound on infty norm}.
\end{proof}

\begin{propx}\label{average zero proposition}
    Assume that Theorem \ref{QE theorem} holds for any sequence of uniformly bounded $\Gamma_n$-invariant kernels $(k_n)_{n\in\N}$ with uniformly relative compact support, uniformly finite propagation and radial average $0$. Then it holds for any sequence of uniformly bounded $\Gamma_n$-invariant kernels with uniformly relative compact support and uniformly finite propagation.
\end{propx}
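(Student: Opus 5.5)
Given a general sequence $(k_n)$ with uniform bound $\|k_n\|_\infty\le B$, propagation $\le M$ and relative compact support in $X_n(\upsilon)$, I will fix this $\upsilon$ and use $\chi_n:=\chi_\upsilon$ on $X_n$ as in (\ref{chi}). The decomposition is $T_n=T_{n,\chi}+\chi_n[T_n]$. Summing the two inequalities of Lemma \ref{averaging discrete and continuous contribution} and dividing by $N(X_n,I)+M(X_n,I)$ gives
\begin{equation*}
\operatorname{Dev}_{X_n,I}(T_n)\le \operatorname{Dev}_{X_n,I}(T_{n,\chi})+\sinh(M)\,B\,\frac{\mathcal{D}_{X_n,I}(\chi_n)+\mathcal{C}_{X_n,I}(\chi_n)}{N(X_n,I)+M(X_n,I)}.
\end{equation*}
The proof then has two tasks: show that $(k_{n,\chi})$ satisfies the hypotheses of the assumed radial-average-zero case, and show that the last quotient tends to $0$.

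For the first task, note that $k_{n,\chi}(x,y)=k_n(x,y)-\chi_n(x)[k_n](d(x,y))$ has radial average zero by (\ref{zero average auxiliary operator}). It also inherits propagation $\le M$ from $[k_n]$. For uniform boundedness, $|[k_n]|\le B$. The factor $\chi_n=\vol(X_n)/\vol(X_n(\upsilon))$ is uniformly bounded because $\vol(X_n)-\vol(X_n(\upsilon))=q_n/\upsilon$ and $q_n/\vol(X_n)\to0$ by Lemma \ref{not too many cusps}. Relative compact support is the delicate point. The term $\chi_n(x)[k_n](d(x,y))$ vanishes when $x\notin X_n(\upsilon)$, but not necessarily when only $y\notin X_n(\upsilon)$. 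Since $[k_n]$ is supported in $[0,M]$, we get $y\in X_n(\upsilon')$ for some $\upsilon'=\upsilon'(\upsilon,M)$, because a point at height $\ge \upsilon e^{M}$ in a cusp lies at distance $>M$ from the horocycle at height $\upsilon$. So the new kernels have uniformly relative compact support in $X_n(\upsilon e^M)$. Hence by assumption $\operatorname{Dev}_{X_n,I}(T_{n,\chi})\to0$.

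For the second task, $\chi_n$ is a uniformly bounded function with uniformly compact support in $X_n(\upsilon)$ and mean $1$. The quotient is exactly the quantum mean absolute deviation for multiplication by $\chi_n$. So I need the multiplication-operator theorem of \cite{le2024quantum} (Theorem \ref{thm to extend}) under our hypotheses: Plancherel, uniform spectral gap and $q_n^2/\vol(X_n)\to0$.

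This is the main obstacle, since that theorem assumes Benjamini--Schramm convergence plus a systole lower bound rather than Plancherel convergence. Benjamini--Schramm convergence follows from Proposition \ref{PL vs BS}(a). For the systole, I would argue, as the introduction suggests, that in \cite{le2024quantum} it enters only through two points. The first is the spectral asymptotic $N+M\sim\mu_{Pl}(I)\vol(X_n)$, which Proposition \ref{prop: spectral convergence} supplies. The second is the control of the geometric term, essentially the $L^2$ norm of the automorphic wave-propagator kernel restricted to $X_n(\upsilon)$. That term is bounded by $\int_{\F_n(\upsilon)}\sharp(\Gamma_n^\star z\cap\overline{B_R}(z))\,dz$, which is $o(\vol X_n)$ by Lemma \ref{lem: Plancherel and parabolic contribution}. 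Alternatively, one observes that multiplication by $\chi_n$ is the degenerate diagonal case of the radial-average-zero reduction already assumed, applied to $\chi_n-1$ on $X_n(\upsilon)$.

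Either way, the quotient tends to $0$. Combined with the first task, this gives $\operatorname{Dev}_{X_n,I}(T_n)\to0$.
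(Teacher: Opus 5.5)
Your argument is essentially the paper's. You use the same splitting $T_n=T_{n,\chi}+\chi_n[T_n]$ via Lemma \ref{averaging discrete and continuous contribution}, the assumed radial-average-zero case for $T_{n,\chi}$, and \cite[Theorem 1.2]{le2024quantum} for $\chi_n$, with the systole bound replaced by Proposition \ref{prop: spectral convergence} and the Plancherel control of the geometric term. Your checks that $k_{n,\chi}$ is uniformly bounded (using $q_n/\vol(X_n)\to0$), and that it is relatively compactly supported only in the enlarged $X_n(\upsilon e^{M})$ rather than $X_n(\upsilon)$, are correct details the paper's proof omits.

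Drop the ``alternative'' in your last paragraph, though. The assumed case covers only bounded measurable kernels, not multiplication operators. Moreover, $\chi_n-1$ is not compactly supported. Finally, the Eisenstein term $\sum_p\langle\chi_n E_p(s),E_p(s)\rangle+\frac{\varphi_{X_n}'}{\varphi_{X_n}}(\frac12+is)$ is not the deviation of a mean-zero observable. So $\operatorname{Dev}_{X_n,I}(\chi_n)\to0$ genuinely needs the cited result.
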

\begin{proof}

    For $n\in\N$ we denote $\pi_n:\Hyp\to X_n$ the quotient projection. 

    By assumption there exists $\upsilon\ge 1$ such that for every $n\in\N$ the kernel $k_n$ has relative compact support in $X_n(\upsilon)$.
    
    % The sequence of kernels has uniform relative compact support if and only if there exist:
    % \begin{itemize}
    %     \item $t>0$ such that $k_n(x,y)=0$ whenever $x\notin X_n(t)$ or $y\notin X_n(t)$;
    %     \item 
    % \end{itemize}
    We define $\chi_n$ as in (\ref{chi}) with respect to such uniform $
\upsilon$. The sequence of kernels having uniformly finite propagation means that there exists $M>0$ such that 
    $$k_n(x,y)=0\qquad \text{whenever } d_{X_n}(x,y)\ge M.$$ 
    Moreover, since the sequence of kernels is uniformly bounded, there exists $L>0$ such that $||k_n||_\infty<L$ for every $n\ge 0$. 

    As a consequence of Lemma \ref{averaging discrete and continuous contribution} , 
    $$\operatorname{Dev}_{X_n,I}(T_n)\le \operatorname{Dev}_{X_n,I}(T_{\chi_n})+L\sinh(M)\operatorname{Dev}_{X_n,I}(\chi_n).$$

    Now, $T_{\chi_n}$ is a radial average zero operator for any $n\in\N$ (\ref{zero average auxiliary operator}). Therefore, by assumption, 
    $$\operatorname{Dev}_{X_n,I}(T_{\chi_n})\xrightarrow[]{n\to\infty}0.$$
    On the other hand, it is the content of \cite[Theorem 1.2]{le2024quantum} that 
    $$\operatorname{Dev}_{X_n,I}(\chi_n)\xrightarrow[]{n\to\infty}0.$$

    We point out that Theorem 1.2 in \cite{le2024quantum} is proven under the assumption that the sequence $(X_n)_{n\in\N}$ is Benjamini-Schramm convergent to $\Hyp$ with a uniform lower bound on the systole, whereas we only assume the sequence to be Plancherel. First off, a Plancherel sequence is Benjamini-Schramm convergent to $\Hyp$ (Prop. \ref{PL vs BS}). Secondly, the uniform lower bound on the systole is used in \cite{le2024quantum} in two steps. First, in the proof of spectral convergence, that is $N(X_n,I)+M(X_n,I)\sim \vol (X_n)$. Spectral convergence is enjoyed by Plancherel sequences, with no requirement of a lower bound on the systoles. The second step is the proof of the Quantum Ergodicity result for mean zero operators. We claim, without proving it now, that Plancherel convergence is enough for the result to hold. We will prove that Plancherel convergence is enough to prove Theorem \ref{QE theorem} for average zero operators in the following sections. The same argument can be used to prove that Plancherel convergence is enough to prove Theorem 1.2 in \cite{le2024quantum}.
\end{proof}

\subsection{Radial disintegration of integral operators}\label{radial disintegration section}

Let $k:\Hyp\times\Hyp\to\C$ be a measurable kernel with propagation bound $M>0$ and let $T:C(\Hyp)\to C(\Hyp)$ be the associated operator. For $0\le r\le M$ we define 
\begin{align*}
    T_r:C(\Hyp)\to& C(\Hyp) \\
    T_r f(x):=&\fint_{\partial B_r(x)} k(x,y)f(y)dA(y) \\
    =&\fint_{\mathbb{S}^1} k(x,\exp_x(r,\theta))f(\exp_x(r,\theta)) d\theta.
\end{align*}

If $\Gamma\le G$ is a lattice and $k$ is a $\Gamma$-invariant kernel, then $T_r$ descends to an operator on $C(X)$, where $X=\Gamma\setminus \Hyp$. In this case, for $s\in[0,\infty)$, we write 
\begin{align*}
\la T_r \ra_{s}:=&\fint_X\fint_{\partial B_r(x)} k(x,y)\phi_s(r) dA(y)dx \\ =&\phi_s(r)\fint_X\fint_{\mathbb{S}^1} k(x,\exp_x(r,\theta))d\theta dx.
\end{align*}
Further, if $k$ is a radial average zero kernel, we write 
$$\mathcal{D}_{X,I}(T_r):=\sum_{j\ \colon \lambda_j\in I}\Big |\la T_r\psi_j,\psi_j\ra\Big |,$$ 
$$\mathcal{C}_{X,I}(T_r):=\frac{1}{4\pi}\int_{\tau^{-1}(I)}\Big|\sum_{p=1}^q\la T_rE_p(s),E_p(s)\ra \Big|ds$$

and 

$$\operatorname{Dev}_{X,I}(T_r):=\frac{\mathcal{D}_{X,I}(T_r)+\mathcal{C}_{X,I}(T_r)}{N(X,I)+M(X,I)}$$

\begin{lemma}\label{radial disintegration}
Let $\Gamma\le G$ be a torsion free lattice and $X=\Gamma\setminus\Hyp$. If $k$ is a measurable $\Gamma$-invariant kernel with propagation bound $M>0$ and $I\subset\left(\frac{1}{4},\infty\right)$ is a compact interval, then
    \begin{equation}\label{disintegration of operator}
        T=\int_0^M T_r\sinh(r)dr
    \end{equation}
and for $s\in[0,\infty)$
    \begin{equation}\label{disintegration of correlation function}
        \la T\ra _s=\int_0^M \la T_r\ra _s\sinh(r) dr.
    \end{equation}

    Moreover, if $k$ has radial average zero, then 

    \begin{equation}\label{disintegration of quantum mean absolute deviation}
        \operatorname{Dev}_{X,I}(T)\le \sinh(M)\int_0^M\operatorname{Dev}_{X,I}(T_r) dr.
    \end{equation}
\end{lemma}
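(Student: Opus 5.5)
The plan is to prove the three assertions in turn; each follows from integration in polar coordinates and the triangle inequality.

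For (\ref{disintegration of operator}), fix $f\in C(\Hyp)$ and $x\in\Hyp$. Apply formula (\ref{eq: polar coordinates}) in polar coordinates based at $x$ to the function $y\mapsto k(x,y)f(y)$. By finite propagation this function vanishes for $d(x,y)>M$. With the normalisation $\fint_{\mathbb S^1}$ (so the constant $2\pi$ is absorbed consistently with the convention used in Lemma \ref{properties of radial kernel}), this gives
$$Tf(x)=\int_0^M\fint_{\mathbb S^1}k(x,\exp_x(r,\theta))f(\exp_x(r,\theta))\,d\theta\,\sinh(r)\,dr=\int_0^M T_rf(x)\sinh(r)\,dr.$$
By $\Gamma$-invariance the identity descends to $C(X)$. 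Identity (\ref{disintegration of correlation function}) follows from the same computation, using $\phi_s(d(x,y))=\phi_s(r)$ on $\partial B_r(x)$ and Fubini, which is legitimate because $k$ is bounded and the domain $\mathcal F\times B_M$ has finite measure. This is exactly the computation (\ref{spherical transform and average}) with the $r$-integral kept outside.

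For (\ref{disintegration of quantum mean absolute deviation}), assume $[k]=0$. Then $\la T\ra_s=0$, and also $\la T_r\ra_s=\phi_s(r)[k](r)=0$ for every $r$. So $\mathcal D_{X,I}(T)=\sum_j|\la T\psi_j,\psi_j\ra|$, with the analogous simplification for $\mathcal C_{X,I}$.

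For the discrete part, insert (\ref{disintegration of operator}) into the inner product. Fubini applies because $\psi_j\in L^2$ and $T_r$ is bounded on $L^2$ uniformly in $r\le M$. The triangle inequality then gives
$$|\la T\psi_j,\psi_j\ra|\le\int_0^M|\la T_r\psi_j,\psi_j\ra|\sinh(r)\,dr\le\sinh(M)\int_0^M|\la T_r\psi_j,\psi_j\ra|\,dr.$$
Summing over $j$ with $\lambda_j\in I$ and exchanging the finite sum with the integral yields $\mathcal D_{X,I}(T)\le\sinh(M)\int_0^M\mathcal D_{X,I}(T_r)\,dr$.

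The continuous part is treated the same way. The sum over cusps of $\la TE_p(s),E_p(s)\ra$ is written as $\int_0^M\sum_p\la T_rE_p(s),E_p(s)\ra\sinh(r)\,dr$. Take absolute values inside, bound $\sinh(r)\le\sinh(M)$, and use Tonelli to swap the $s$- and $r$-integrals. Finally divide by $N(X,I)+M(X,I)$, which does not depend on $r$.

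The main obstacle is justifying these pairings for Eisenstein series, which are not in $L^2$. This is where relative compact support is needed. If $k$ is supported in $X(\upsilon)\times X(\upsilon)$, then so is each $T_r$ (its kernel is a restriction of $k$). All pairings $\la TE_p,E_p\ra$ and $\la T_rE_p,E_p\ra$ are therefore integrals over the compact set $X(\upsilon)$ of continuous functions against a bounded kernel. These are finite, jointly measurable in $(r,s)$, and uniformly bounded for $s\in\tau^{-1}(I)$ compact and $r\le M$, which justifies Fubini. If this hypothesis is meant to be implicit in the lemma, I would add it explicitly. Otherwise the statement should be read with the convention that the Eisenstein pairings are defined, as they are in the setting of Theorem \ref{QE theorem}.
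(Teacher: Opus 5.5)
Your proposal is correct and takes essentially the same route as the paper. Polar coordinates at $x$ give $T=\int_0^M T_r\sinh(r)\,dr$ and the analogous identity for $\la T\ra_s$; the deviation bound then follows by inserting this into each pairing, using that the centering terms vanish when $[k]=0$, and bounding $\sinh(r)\le\sinh(M)$ under the triangle inequality. Your additional care goes beyond what the paper writes: it does not track the $2\pi$ coming from $\fint_{\mathbb S^1}$ (harmless, since it only changes a constant), it omits the Fubini justifications, and it leaves implicit that the Eisenstein pairings $\la TE_p(s),E_p(s)\ra$ need relative compact support, a hypothesis that does hold wherever the lemma is applied.
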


\begin{proof}
In order to prove (\ref{disintegration of operator}), fix $f\in C(\Hyp)$. Then 
    \begin{align*}
        \int_0^\infty T_rf(x)\sinh(r)dr&=\int_0^\infty\fint_{\mathbb{S}^1}k(x,\exp_x(r,\theta))f(\exp_x(r,\theta))\sinh(r)d\theta dr\\
        &=\int_\Hyp k(x,y)f(y)dy\\
        &=Tf(x).
    \end{align*}
To prove (\ref{disintegration of correlation function}), apply the argument above to $f=\phi_s$ and observe that 
$$\la T\ra_s=\fint_X T\phi_s(x) dx.$$
Now, to prove (\ref{disintegration of quantum mean absolute deviation}), observe that 
\begin{align*}
    \mathcal{D}_{X,I}(T)&=\sum_{j\ \colon \lambda_j\in I}\Big|\la T\psi_j,\psi_j\ra \Big|\\
    &=\sum_{j\ \colon \lambda_j\in I}\Big|\la \int_0^M T_r\psi_j\sinh(r)dr,\psi_j\ra \Big|\\
    &\le\sinh(M)\int_0^M\sum_{j\ \colon \lambda_j\in I}\Big|\la T_r\psi_j,\psi_j\ra \Big|dr\\
    &=\sinh(M)\int_0^M\mathcal{D}_{X,I}(T_r) dr.
\end{align*}
and similarly 
$$\mathcal{C}_{X,I}(T)\le\sinh(M)\int_0^M\mathcal{C}_{X,I}(T_r)dr.$$
\end{proof}

% \begin{lemma}
%     Let $k$ be a smooth kernel with compact support and radial everage zero. Then $[T_r]=0$ for every $r\ge 0$. 
% \end{lemma}
% \begin{proof}
%     \textcolor{red}{To do.}
% \end{proof}

\begin{rem}
    Lemma \ref{radial disintegration}, together with Proposition \ref{average zero proposition}, implies that Theorem \ref{QE theorem} is an immediate corollary of the following statement.
\end{rem}

\begin{propx}\label{prop: first big reduction}
    Let $I\subset \left(\frac{1}{4},\infty\right)$ be a compact interval and $(X_n)_{n\in\N}$ a sequence of finite area hyperbolic surfaces such that 
    \begin{enumerate}
        \item $(X_n)_{n\in\N}$ is a Plancherel sequence;
        \item there is a uniform spectral gap for the Laplacian on $X_n$;
        \item $\frac{q_n^2}{\vol(X_n)}\xrightarrow[]{n\to\infty}0$.
    \end{enumerate}
    Then, for any uniformly bounded sequence of measurable kernels $(k_n)_{n\in\N}$ with uniformly relative compact support, uniformly finite propagation, radial average zero and for any $r\ge0$, 
    $$\operatorname{Dev}_{X_n,I}((T_n)_r)\xrightarrow[]{n\to\infty}0.$$
\end{propx}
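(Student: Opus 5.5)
We follow the wave-propagation strategy of Le Masson--Sahlsten \cite{le2017quantum,le2024quantum} and Abert--Bergeron--Le Masson \cite{abert2022eigenfunctions}, applied to the fixed-radius operator $A:=(T_n)_r$ with $r$ fixed. By the spectral convergence of Proposition \ref{prop: spectral convergence} and $\mu_{\Pl}(I)>0$, it suffices to show that $\frac{1}{\vol(X_n)}\big(\mathcal{D}_{X_n,I}(A)+\mathcal{C}_{X_n,I}(A)\big)\to 0$.

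\textbf{Step 1: averaging along wave propagators.} Fix $R>0$ large. Choose an admissible eigenvalue function $h_R\ge 0$ that is bounded below by a constant $c_I>0$ on $\tau^{-1}(I)$. Its inverse Selberg transform should be a radial kernel of propagation $\lesssim R$, built as in \cite{le2017quantum} from the averaged wave propagator $\frac{1}{R}\int_0^R P_t\,dt$. Since $\la A\psi_j,\psi_j\ra=h_R(s_j)^{-1}\la A\,\mathbf{P}_R\psi_j,\psi_j\ra$ for every eigenfunction, with $\mathbf{P}_R$ the convolution operator of this kernel, and the same identity holds for the Eisenstein series, we get
$$\frac{1}{\vol(X_n)}\Big(\mathcal{D}(A)+\mathcal{C}(A)\Big)\lesssim c_I^{-1}\frac{1}{\vol(X_n)}\Big(\sum_{\lambda_j\in I}|\la A\mathbf{P}_R\psi_j,\psi_j\ra|+\tfrac{1}{4\pi}\int_{\tau^{-1}(I)}\Big|\sum_p\la A\mathbf{P}_R E_p,E_p\ra\Big|ds\Big).$$
Applying Cauchy--Schwarz in the Hilbert--Schmidt form used in \cite{le2024quantum} bounds the right-hand side by $\big(\frac{N+M}{\vol X_n}\big)^{1/2}\cdot\big(\frac{1}{\vol X_n}\|A\mathbf{P}_R\|_{HS,\upsilon}^2\big)^{1/2}$ plus a cusp correction. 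Here the Hilbert--Schmidt norm is cut off to $X_n(\upsilon)$, which uses the uniformly relative compact support. The cusp correction (the truncated-Eisenstein/Maass--Selberg error) is controlled by $q_n^2/\vol(X_n)\to 0$, exactly as in \cite{le2024quantum}.

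\textbf{Step 2: splitting the Hilbert--Schmidt norm.} The kernel of $A\mathbf{P}_R$ on $X_n$ is $\sum_{\gamma\in\Gamma_n}\mathcal{K}(x,\gamma y)$, where $\mathcal{K}$ is the lift to $\Hyp$, supported in $d(x,y)\le r+R$. Split $\Gamma_n$ into the identity term and the rest.

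The non-identity terms contribute only at points $x\in\mathcal{F}_n(\upsilon)$ with $\Gamma_n^\star x\cap \overline{B}_{2(r+R)}(x)\ne\emptyset$. Their contribution is bounded by $C(R,r,\upsilon)\cdot\frac{1}{\vol X_n}\int_{\mathcal{F}_n(\upsilon)}\sharp(\Gamma_n^\star x\cap\overline B_{2(r+R)}(x))^2dx$. The square is harmless: each count can be bounded by a single count at a slightly larger radius. This quantity tends to $0$ by Lemma \ref{lem: Plancherel and parabolic contribution}. This is exactly where Plancherel convergence replaces the systole bound. The counts themselves may be large near short geodesics, so, unlike in the Benjamini--Schramm argument, we must weight by multiplicity rather than merely measure the thin part.

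The identity term is $\frac{1}{\vol X_n}\int_{\mathcal F_n}\int_\Hyp|\mathcal{K}(x,y)|^2$. It is where the radial average zero hypothesis and the spectral gap enter: as in \cite[Sec.~6--7]{abert2022eigenfunctions}, the composition of a mean-zero sphere operator with the averaged wave propagator has identity contribution $O(1/R)$ (or $O(R^{-1}\log R)$), uniformly in $n$, using exponential decay of the spherical functions and the uniform spectral gap to control the small eigenvalues.

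\textbf{Main obstacle.} Two points need care. The first is the identity-term decay: the operator $(T_n)_r$ is singular, supported on a sphere, so its Hilbert--Schmidt norm is not finite. We will first smooth it by composing with the averaging propagator, which is exactly why $A\mathbf{P}_R$ rather than $A$ is estimated. We then verify the $L^2$ ergodic-type decay via the radial average zero property together with the Kunze--Stein/spectral gap estimates. The second is handling the continuous spectrum with the truncation at $X_n(\upsilon)$. Taking $n\to\infty$ first and then $R\to\infty$ concludes.
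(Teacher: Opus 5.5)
Your outline has the right architecture: spectral convergence, Maa\ss--Selberg truncation of the Eisenstein series, and a split of the Hilbert--Schmidt norm into identity and non-identity terms. However, the operator you estimate in Step~1 makes the identity-term claim in Step~2 false. You compose $A=(T_n)_r$ on one side only with a radial operator $\mathbf{P}_R$ whose transform satisfies $h_R\ge c_I$ on $\tau^{-1}(I)$. For such an operator the identity term of $\|A\mathbf{P}_R\|_{HS}^2$ does not decay in $R$. Take $k_n(x,y)=b_n(x)\1_{[0,M]}(d(x,y))$ with $b_n$ real, $\Gamma_n$-invariant, of mean zero, and $|b_n|=\1_{X_n(\upsilon)}$. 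This kernel is bounded, has propagation $M$, is relatively compactly supported (in $X_n(\upsilon e^M)$), and has $[k_n]=0$. For $r\le M$ you get $A=b_n\cdot S_r$, where $S_r$ is the spherical mean. So $A\mathbf{P}_R$ has kernel $b_n(x)\kappa(d(x,y))$ with $\widehat{\kappa}(s)=\phi_s(r)h_R(s)$, and the Plancherel formula for radial kernels gives
\[
\int_{\mathcal{F}_n}\int_{\Hyp}\big|b_n(x)\kappa(d(x,y))\big|^2\,dy\,dx=\|b_n\|_{L^2(X_n)}^2\|\kappa\|_{L^2(\Hyp)}^2\gtrsim c_I^2\,\vol X_n(\upsilon)\int_{\tau^{-1}(I)}|\phi_s(r)|^2\,s\tanh(\pi s)\,ds .
\]
This is $\gtrsim_{I,r}\vol X_n$ uniformly in $R$; for $r=0$ it is simply a mean-zero multiplication operator. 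Hence your Cauchy--Schwarz bound only yields $\operatorname{Dev}_{X_n,I}(A)=O(1)$. The obstruction is structural. Right-composition with a radial operator multiplies $\la A\psi_k,\psi_j\ra$ by $h_R(s_k)$ and does nothing to damp the off-diagonal coefficients, which carry the Hilbert--Schmidt mass. Moreover, the hypothesis $[k]=0$ is an average in the \emph{first} variable of $k$, and $A\mathbf{P}_R$ never averages that variable. Separately, an $h_R$ built from $\frac1R\int_0^R P_t\,dt$ is not bounded below at all. Since $\widehat{\rho_t}(s)$ oscillates in $t$, $\frac1R\int_0^R\widehat{\rho_t}(s)\,dt=O(1/R)$ on $\tau^{-1}(I)$; only $|\widehat{\rho_t}(s)|^2$ has positive time average.

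The missing idea is to conjugate with the same propagation time on both sides and then average in time: $\mathbf{F}_{T,r}=\frac1T\int_0^T P_tAP_t\,dt$. Diagonal coefficients are then multiplied by $\frac1T\int_0^T|\widehat{\rho_t}(s)|^2\,dt\ge C_I$. At the same time, the kernel of $P_tAP_t$ averages $k(w,\exp_w(r,\theta))$ over $w$ in the lens $B_t(p^-_z)\cap B_t(p^+_z)$. This is what lets Nevo's ergodic theorem on $X_n\times\mathbb{S}^1$, applied to the mean-zero function $a_r(z,\theta)=k(z,\exp_z(r,\theta))$, bound the identity term by $\|k\|_\infty^2\vol X_n/(T\rho(\lambda_1))$, with constants depending only on the spectral gap.

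A second, fixable, problem is in your non-identity estimate: the square in $\int\sharp(\Gamma_n^\star x\cap\overline{B}(x))^2\,dx$ is not harmless. Near a closed geodesic of length $\ell$ this integral is of order $1/\ell$. Plancherel convergence only controls the unsquared count, which is of order $\log(1/\ell)$. Plancherel convergence is compatible with a geodesic of length $\ell_n$ satisfying $\log(1/\ell_n)=o(\vol X_n)$ but $\ell_n\vol X_n\to 0$, and then your quantity divided by $\vol X_n$ diverges. Instead, use $|\sum_\gamma K(x,\gamma y)|^2\le\sharp(\Gamma y\cap\overline{B_{2R'}}(y))\sum_\gamma|K(x,\gamma y)|^2$, where $R'$ is the propagation of $K$, and unfold the second factor in $x$. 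This leaves a single count, as in Lemma~\ref{lem: geometric bound}.
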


The proof of Proposition \ref{prop: first big reduction} is the content of the following two sections.

\subsection{Spectral bound}\label{spectral bound section}

Fix a compact interval $I\subset \left(\frac{1}{4},\infty\right)$, a finite-area hyperbolic surface $X$ with $q$ cusps and a radial average zero $\Gamma$-invariant measurable kernel $k\in L^\infty(\Hyp\times\Hyp)$ with finite propagation and with relative compact support in $X(\upsilon)$ for some $\upsilon\ge 1$. For $r\ge0$, let $T_r$ be the operator with singular kernel supported on $\{(x,y)\in X\times X\ \colon d(x,y)=r\}$ defined as in section \ref{radial disintegration section}.

For $t\ge0$, we introduce the wave propagation operator
$$P_tf(x):=e^{-t/2}\int_{B_t(x)} f(y) dy.$$
This is the operator by convolution with the radial kernel 
$$\rho_t:=\frac{\1_{[0,t]}}{e^{t/2}}.$$

In order to prove Proposition \ref{prop: first big reduction}, we want to formally replace the operator $T_r$ in $\operatorname{Dev}_{X,I}(T_r)$ with the time evolution operator $F_{T,r}:=\frac{1}{T}\int_0^T P_t T_r P_t dt$. Then, we reduce the estimate of the quantum mean absolute deviation $\operatorname{Dev}_{X,I}(F_{T,r})$ to an estimate of the Hilbert--Schmidt norm of $F_{T,r}$.
% The goal of this section is to show that this formal replacement is meaningful (see Lemma \ref{spectral bound lemma}). In order to do so, we first show that $F_{T,r}$ 

\begin{lemma}\label{lem: ker of F_T}
    $F_{T,r}$ is an operator by convolution with a bounded measurable kernel with finite propagation and relative compact support in $X(\upsilon+2T+r)$.
\end{lemma}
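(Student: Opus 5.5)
We will write down the kernel of $F_{T,r}$ explicitly and read off the three properties from it. For a fixed time $t$, the composition $P_tT_rP_t$ acts on $f\in C(\Hyp)$ by
\[
P_tT_rP_tf(x)=e^{-t}\int_{B_t(x)}\fint_{\mathbb{S}^1}k(y,\exp_y(r,\theta))\int_{B_t(\exp_y(r,\theta))}f(w)\,dw\,d\theta\,dy .
\]
We apply Fubini to bring the $w$-integral outside. This gives $P_tT_rP_tf(x)=\int_\Hyp K_t(x,w)f(w)\,dw$ with
\[
K_t(x,w):=e^{-t}\int_{B_t(x)}\fint_{\mathbb{S}^1}k(y,\exp_y(r,\theta))\,\1_{\{d(\exp_y(r,\theta),w)<t\}}\,d\theta\,dy .
\]
Averaging in $t$, $F_{T,r}$ is convolution with $K(x,w):=\frac{1}{T}\int_0^TK_t(x,w)\,dt$. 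Measurability of $K$ follows from measurability of the integrand jointly in $(t,x,w,y,\theta)$ together with Tonelli/Fubini. The integrand is bounded and integrated over a bounded region, so everything is justified.

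The three properties then follow in turn.

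\textbf{$\Gamma$-invariance.} For $\gamma\in\Gamma$, substitute $y\mapsto\gamma y$. Since $\gamma$ is an isometry, it maps $B_t(x)$ to $B_t(\gamma x)$ and satisfies $\exp_{\gamma y}(r,\theta)=\gamma\exp_y(r,\theta')$ after a rotation of $\theta$. Combined with the invariance of $k$, this gives $K(\gamma x,\gamma w)=K(x,w)$.

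\textbf{Boundedness.} We have $|K_t(x,w)|\le e^{-t}\|k\|_\infty\vol(B_t)$, and $\vol(B_t)=2\pi(\cosh t-1)\le \pi e^{t}$. Hence $\|K\|_\infty\le \pi\|k\|_\infty$, uniformly in $T$ and $r$. This normalization is exactly why $P_t$ carries the factor $e^{-t/2}$.

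\textbf{Finite propagation.} If $K_t(x,w)\neq0$, there is a chain $x\to y\to \exp_y(r,\theta)\to w$ with step lengths $<t$, $=r$, $<t$. Therefore $d(x,w)<2t+r\le 2T+r$, and the propagation bound is $2T+r$.

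The last property, relative compact support, is the one needing care, and I expect it to be the main obstacle. Nonvanishing of the integrand requires $k(y,\exp_y(r,\theta))\ne0$. By relative compact support of $k$ in $X(\upsilon)$, this forces $\pi(y)\in X(\upsilon)$ and $\pi(\exp_y(r,\theta))\in X(\upsilon)$. Then $x$ lies within distance $t\le T$ of a lift of a point of $X(\upsilon)$, and $w$ lies within distance $t\le T$ of such a lift as well (via $\exp_y(r,\theta)$). So it suffices to show that the $T$-neighbourhood of $X(\upsilon)$ in $X$ is contained in $X(\upsilon+2T+r)$.

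To see this, we use the cusp model. In the coordinates $\sigma_p^{-1}z$ on a cuspidal zone, moving a hyperbolic distance $d$ changes $\log\operatorname{Im}$ by at most $d$. A point at distance $\le T$ from $\{\operatorname{Im}\le\upsilon\}$ therefore has height at most $\upsilon e^{T}$, not $\upsilon+T$. Consequently the honest containment is in $X(\upsilon e^{T})$, or, depending on the height-parametrization convention, $X(\upsilon+T)$ after passing to the log-height coordinate $\rho$ of the cusp model in \cite[Section 4.4]{buser2010geometry}. I would reconcile this with the stated $X(\upsilon+2T+r)$ by interpreting the cusp cutoff in that logarithmic parametrization. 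In any case, what the later argument actually needs is support in some $X(\upsilon')$ with $\upsilon'$ depending only on $\upsilon,T,r$, and that holds.

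A second point needs checking: the geodesic from a point of $X(\upsilon)$ may leave a cuspidal zone and re-enter another. Since the cuspidal zones for parameter $\ge1$ are disjoint, and $X(\upsilon)$ contains everything outside the zones, the height bound only has to be applied within the single zone containing the endpoint. Hence this is not a real issue.
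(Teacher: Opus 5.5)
Your argument is correct and follows the paper's route. You write the kernel of $P_tT_rP_t$ explicitly (your $K_t$ is the paper's $k_{t,r}$ in polar coordinates), average over $t\in[0,T]$, and then read off three things: boundedness, the propagation bound $2T+r$ from the chain of steps $<t$, $=r$, $<t$, and the support from that of $k$. Your measurability and $\Gamma$-invariance checks are left implicit in the paper. Your bound $\|K\|_\infty\le\pi\|k\|_\infty$ is what the estimate actually gives, since $e^{-t}\vol(B_t)=\pi(1-e^{-t})^2$. The paper's $\|k_{t,r}\|_\infty\le\|k\|_\infty$ omits this harmless factor.

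On the support, your analysis is more accurate than the paper's, which asserts $X(\upsilon+2T+r)$ ``by construction''. With the paper's own convention $\cu(\upsilon)=\{\operatorname{Im}z\ge\upsilon\}$, the function $z\mapsto\log\operatorname{Im}\sigma_p^{-1}z$ is $1$-Lipschitz on all of $\Hyp$. Hence the $T$-neighbourhood of $X(\upsilon)$ lies in $X(\upsilon e^{T})$. This global Lipschitz property also settles your second worry without tracking which cuspidal zones a geodesic crosses.

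One cannot do better in general. Take $x$ directly above the boundary horocycle of a cusp, at height $\upsilon e^{T'}$ with $T'<T-r$. For $t\in(T'+r,T]$ the ball $B_t(x)$ contains pairs $(w,z)$ with $d(w,z)=r$ and both points in $\pi^{-1}X(\upsilon)$. So for a suitable $k$ one gets $K_{T,r}(x,x)\neq0$: take $k$ nonnegative near that horocycle, with the compensating negative part placed more than $T$ away so that $[k]=0$. Meanwhile $\upsilon e^{T'}>\upsilon+2T+r$ once $T$ is large, so the literal claim fails.

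So do not try to reconcile via a logarithmic height, which is not the parametrization the paper uses. State the lemma with $X(\upsilon e^{T})$ instead. As you say, nothing downstream depends on the exact cutoff. The constant in Proposition~\ref{prop: spectral bound lemma} becomes $\max\{N(X,I),\,4q(\log\upsilon+T)+q^2e^{-4\pi\upsilon e^{T}}+M(X,I)\}^{1/2}$. For fixed $T$ this is still $(N(X_n,I)+M(X_n,I))^{1/2}(1+o(1))$ under $q_n^2/\vol(X_n)\to0$. Lemma~\ref{lem: Plancherel and parabolic contribution} applies to any fixed cutoff.
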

\begin{proof}
    First of all, notice that the operator $P_tT_rP_t$ has kernel 
$$k_{t,r}(x,y)=\frac{e^{-t}}{2\pi\sinh(r)}\int _{B_t(x)}\int_{\partial B_r(w)\cap B_t(y)}k(w,z)dA(z)dw.$$
In fact, 
\begin{align*}
    P_tT_rP_t\ f(x)&=e^{-t/2}\int _X \1_{[0,t]}(x,w) T_rP_t\ f(w)dw\\
    &=e^{-t/2}\int _X \1_{[0,t]}(x,w)\fint_{\partial B_r(w)} k(w,z) P_tf(z) dA(z)dw\\
    &=e^{-t}\int _X \1_{[0,t]}(x,w)\fint_{\partial B_r(w)} k(w,z)\int_X\1_{[0,t]}(y,z) f(y) dy dA(z)dw\\
    &=e^{-t}\int _X \int_X\1_{[0,t]}(x,w)\fint_{\partial B_r(w)}k(w,z)\1_{[0,t]}(y,z) dA(z)dwf(y)dy\\
    &=\frac{e^{-t}}{2\pi\sinh(r)}\int _X \int_{B_t(x)}\int_{\partial B_r(w)\cap B_t(y)} k(w,z) dA(z)dw f(y)dy.
\end{align*}

Hence, $k_{t,r}(x,y)=0$ whenever $d(x,y)\ge 2t+r$ and $||k_{t,r}||_\infty\le ||k||_\infty$, so that $k_{t,r}$ is a bounded measurable kernel with finite propagation. 

Now observe that the kernel of $F_{T,r}$ is 
$$K_{T,r}=\frac{1}{T}\int_0^T k_{t,r}dt.$$

Therefore $K_{T,r}$ has propagation bound $2T+r$. We now turn our attention to the support of $K_{T,r}$. By assumption, $k$ has relative compact support in $X(\upsilon)$. Hence, by construction, $K_{T,r}$ has relative compact support in $X(\upsilon+2T+r)$.

\end{proof}

\begin{rem}\label{rem: autom ker of F_T}
    The operator $F_{T,r}$, at first defined over $\Hyp$, descends to an integral operator $\textbf{F}_{T,r}$ on $X$ with automorphic kernel 
    $$\textbf{K}_{T,r}(x,y)=\sum_{\gamma\in\Gamma} K_{T,r}(x,\gamma.y).$$
    Notice that for each $x,y\in X$ the sum is finite, as $K_{T,r}$ has finite propagation. Moreover, As a consequence of Lemma \ref{lem: ker of F_T}, the kernel $\textbf{K}_{T,r}$ is supported in $X(\upsilon+2T+r)\times X(\upsilon+2T+r).$
\end{rem}

Recall that there exist constants $T_I>0$ and $C_I>0$ such that for any $T>T_I$ and $r\in \tau^{-1}(I)$
\begin{equation}\label{eq: bound on time evolved spherical transform}
    \frac{1}{T}\int_0^T \Big|\widehat{\rho_t}(r)\Big|^2dt\ge C_I.
\end{equation}
This is the content of \cite[Proposition 4.2]{le2017quantum}.  In the following we use the notation $a\lesssim b$ when there exists a universal constant $C>0$ such that $a\le Cb$. Also, we write $a\lesssim_I b$ if there is a constant $C_I$ depending only on $I$ such that $a\le C_I b$.

We have the following result, which is the analogous of Proposition 3.3 in \cite{le2024quantum}.

\begin{propx}\label{prop: spectral bound lemma}
There exist $T_I>0$ and $C(X,I,K_{T,r})>0$ such that for any $T>T_I$ 
$$\mathcal{D}_{X,I}(T_r)+\mathcal{C}_{X,I}(T_r)\lesssim_I C(X,I,K_{T,r})\Bigg|\Bigg|\textbf{F}_{T,r}\Bigg|\Bigg|_{HS},$$
where the constant $C(X,I,k)$ can be chosen to be
$$\max\big\{N(X,I),4q\log (\upsilon+2T+r)+q^2e^{-4\pi (\upsilon+2T+r)}+M(X,I)\}^{1/2}.$$
\end{propx}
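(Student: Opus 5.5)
The plan follows the scheme of \cite[Proposition 3.3]{le2024quantum}: replace $T_r$ by the time-averaged operator $F_{T,r}$, then use Cauchy--Schwarz to reduce to a spectral expansion of the Hilbert--Schmidt norm of $\textbf{F}_{T,r}$.

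\emph{Step 1: relate the diagonal coefficients of $T_r$ to those of $F_{T,r}$.} Each $\psi_j$ with $\lambda_j=\frac14+s_j^2\in I$ is an eigenfunction of the radial operator $P_t$ with eigenvalue $\widehat{\rho_t}(s_j)$. This follows from Remark \ref{spherical transform for non smooth kernels}, since $\rho_t$ is bounded and compactly supported. The same holds for the Eisenstein series $E_p(s)$, and $\widehat{\rho_t}(s)$ is real for real $s$, so $P_t$ is self-adjoint. Hence
$$\la F_{T,r}\psi_j,\psi_j\ra=\Big(\frac1T\int_0^T|\widehat{\rho_t}(s_j)|^2dt\Big)\la T_r\psi_j,\psi_j\ra,$$
and similarly $\la F_{T,r}E_p(s),E_p(s)\ra$ is the same scalar times $\la T_rE_p(s),E_p(s)\ra$. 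For the Eisenstein series, which are not in $L^2$, I would justify the computation as follows. By Remark \ref{rem: autom ker of F_T} the automorphic kernel $\textbf{K}_{T,r}$ is supported in $X(\upsilon')\times X(\upsilon')$ with $\upsilon'=\upsilon+2T+r$. All pairings are therefore integrals over a compact set, and moving $P_t$ across the pairing only requires its finite propagation. By (\ref{eq: bound on time evolved spherical transform}), for $T>T_I$ the scalar factor is at least $C_I$. This gives
$$\mathcal{D}_{X,I}(T_r)+\mathcal{C}_{X,I}(T_r)\le C_I^{-1}\Big(\sum_{\lambda_j\in I}|\la \textbf{F}_{T,r}\psi_j,\psi_j\ra|+\frac1{4\pi}\int_{\tau^{-1}(I)}\Big|\sum_p\la \textbf{F}_{T,r}E_p(s),E_p(s)\ra\Big|ds\Big).$$

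\emph{Step 2: apply Cauchy--Schwarz.} The discrete part is at most $N(X,I)^{1/2}\big(\sum_j|\la \textbf{F}\psi_j,\psi_j\ra|^2\big)^{1/2}$. For the continuous part, I bound $|\sum_p\la \textbf{F}E_p,E_p\ra|\le\sum_p\|\textbf{F}E_p\|_{L^2}\|\1_{X(\upsilon')}E_p\|_{L^2}$, which is legitimate because of the support property. Cauchy--Schwarz in $p$ and in $s$ then gives
$$\Big(\frac1{4\pi}\int_{\tau^{-1}(I)}\sum_p\|\1_{X(\upsilon')}E_p(s)\|^2ds\Big)^{1/2}\Big(\frac1{4\pi}\int\sum_p\|\textbf{F}E_p(s)\|^2ds\Big)^{1/2}.$$
The first factor is controlled by the Maass--Selberg relations for the truncated Eisenstein series. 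For $s$ real,
$$\sum_p\|\Lambda^{\upsilon'}E_p(\tfrac12+is)\|^2=4q\log\upsilon'-\frac{\varphi'}{\varphi}(\tfrac12+is)+\text{(oscillating terms in }\Phi(\tfrac12+is)\upsilon'^{\pm2is}),$$
and the oscillating terms are bounded using unitarity (\ref{scattering is unitary}). The difference between $E_p$ and its truncation on $X(\upsilon')$ consists of the nonconstant Fourier modes, whose Whittaker-function decay contributes at most the $q^2e^{-4\pi\upsilon'}$ term. Integrating over $\tau^{-1}(I)$ produces the constant $4q\log\upsilon'+q^2e^{-4\pi\upsilon'}+M(X,I)$, up to factors depending only on $I$.

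\emph{Step 3: conclude with the spectral decomposition of the Hilbert--Schmidt norm.} The decomposition of $L^2(X)$ gives
$$\|\textbf{F}_{T,r}\|_{HS}^2=\sum_j\|\textbf{F}\psi_j\|^2+\frac1{4\pi}\int_\R\sum_p\|\textbf{F}E_p(s)\|^2ds.$$
This dominates both $\sum_{\lambda_j\in I}|\la\textbf{F}\psi_j,\psi_j\ra|^2$ and the continuous integral restricted to $\tau^{-1}(I)$. The Hilbert--Schmidt norm is finite because $\textbf{K}_{T,r}$ is bounded and compactly supported. Combining the two Cauchy--Schwarz factors, each at most $C(X,I,K_{T,r})^{1/2}$-type, gives the claim.

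I expect Step 2 to be the main obstacle. The continuous-spectrum factor requires a careful Maass--Selberg computation for the truncation at height $\upsilon'$, and the exact form of the constant must be tracked there, as in \cite{le2024quantum}. The interchange in Step 1 for non-$L^2$ Eisenstein series also needs care.
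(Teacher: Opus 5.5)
Your proposal is correct and follows the paper's argument essentially step for step. Step 1 is the paper's reduction from $T_r$ to $\mathbf{F}_{T,r}$ via self-adjointness of $P_t$ and the lower bound $\frac1T\int_0^T|\widehat{\rho_t}(s)|^2\,dt\ge C_I$ on $\tau^{-1}(I)$. Steps 2--3 reproduce the paper's Hilbert--Schmidt lemma: cutoff by $\1_{X(\upsilon')}$, Cauchy--Schwarz, spectral expansion of $\|\mathbf{F}_{T,r}\|_{HS}^2$, and the Maass--Selberg bound. Your justification of the Eisenstein-series pairings is more careful than the paper's.

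One minor correction to your Maass--Selberg sketch. On $X(\upsilon')$ the truncation $\Lambda^{\upsilon'}E_p$ coincides with $E_p$, since truncation only removes the constant term inside the cuspidal zones. Hence $\|\1_{X(\upsilon')}E_p(s)\|\le\|\Lambda^{\upsilon'}E_p(s)\|$ holds trivially, and no estimate on nonconstant Fourier modes is needed. The oscillating terms are $O_I(q)$ because $|s|$ is bounded below on $\tau^{-1}(I)$.
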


We obtain Prop \ref{prop: spectral bound lemma} as a special case of the following more general result. 

\begin{lemma}\label{lem: HS bound}
    Let $F$ be an integral operator on $X$ with essentially bounded kernel $K:X\times X\to\C$  supported in $X(\upsilon)\times X(\upsilon)$ for some $\upsilon\ge 1$. Then 
    $$\mathcal{D}_{X,I}(F)+\mathcal{C}_{X,I}(F)\lesssim_I  C(X,I,K)\Big|\Big| F\Big|\Big|_{HS},$$
    where the constant $C(X,I,K)$ can be chosen to be 
    $$\max\big\{N(X,I),4q\log (\upsilon)+q^2e^{-4\pi \upsilon}+M(X,I)\}^{1/2}.$$
\end{lemma}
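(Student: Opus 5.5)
The plan is to apply Cauchy--Schwarz to the discrete and continuous parts separately. Then I compare the resulting sums of squares with the Hilbert--Schmidt norm, using the spectral decomposition of $L^2(X)$. Write $\mathbf{K}$ for the kernel $K$ viewed on $X\times X$. The spectral theorem gives
$$\|F\|_{HS}^2=\sum_j\|F\psi_j\|_2^2+\frac{1}{4\pi}\sum_{p=1}^q\int_\R\|F E_p(s)\|_2^2\,ds .$$
This holds because $\{\psi_j\}$ together with the Eisenstein packets span $L^2(X)$, and $\|F\|_{HS}=\|F^*\|_{HS}$. Here $F E_p(s)$ makes sense because $K$ is bounded and supported in $X(\upsilon)\times X(\upsilon)$, so only the restriction of $E_p(s)$ to the compact set $X(\upsilon)$ enters. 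I would state this identity as a preliminary step and justify it through the Parseval identity for the expansion of $x\mapsto \overline{K(x,\cdot)}\in L^2(X)$.

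For the discrete part, $|\la F\psi_j,\psi_j\ra|\le\|F\psi_j\|_2$, so Cauchy--Schwarz gives
$$\mathcal{D}_{X,I}(F)\le N(X,I)^{1/2}\Big(\sum_{j:\lambda_j\in I}\|F\psi_j\|^2\Big)^{1/2}\le N(X,I)^{1/2}\|F\|_{HS}.$$
(The subtracted averages $\la F\ra_s$ are to be read as zero here, as in the radial average zero setting where this lemma will be applied. Otherwise I would absorb them into the same estimate.)

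For the continuous part, the support condition gives
$$|\la FE_p(s),E_p(s)\ra|\le\|FE_p(s)\|_2\,\|E_p(s)\|_{L^2(X(\upsilon))}.$$
Then Cauchy--Schwarz in $p$ and in $s$ over $\tau^{-1}(I)$ gives
$$\mathcal{C}_{X,I}(F)\le\Big(\frac{1}{4\pi}\int_{\tau^{-1}(I)}\sum_p\|E_p(s)\|^2_{L^2(X(\upsilon))}ds\Big)^{1/2}\|F\|_{HS}.$$
It remains to bound the first factor by $4q\log\upsilon+q^2e^{-4\pi\upsilon}+M(X,I)$, up to constants depending on $I$. For this I would use the Maass--Selberg relations for the truncated Eisenstein series $\Lambda^\upsilon E_p$ at $s=\frac12+it$, $t\neq0$:
$$\sum_p\|\Lambda^\upsilon E_p\|^2=2q\log\upsilon-\frac{\varphi'}{\varphi}\Big(\frac12+it\Big)+\operatorname{Re}\Big(\frac{\upsilon^{2it}\operatorname{Tr}\overline{\Phi(\tfrac12+it)}}{it}\Big).$$
On $X(\upsilon)$ one has $\Lambda^\upsilon E_p=E_p$, so the left side dominates $\sum_p\|E_p(s)\|^2_{L^2(X(\upsilon))}$. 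On $\tau^{-1}(I)$, $t$ is bounded away from $0$, so the oscillating term is $O_I(q)$ using $|\operatorname{Tr}\Phi|\le q$ from unitarity (\ref{scattering is unitary}). Integrating over $\tau^{-1}(I)$ produces $M(X,I)$ from the $-\varphi'/\varphi$ term and $O_I(q\log\upsilon)$ from the rest. The stated form $q^2e^{-4\pi\upsilon}$ suggests the author bounds the nonzero Fourier-mode remainder instead; my version yields $q$ in place of that term, which is of the same order or better. I would either match the stated expression or note that the stated constant dominates.

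Finally, I add the two estimates and bound each prefactor by the maximum in $C(X,I,K)$. The main obstacle is the continuous term. First, the Hilbert--Schmidt identity involving Eisenstein series needs care, since the $E_p(s)$ are not in $L^2(X)$. Second, Maass--Selberg has to be applied carefully with the sign conventions of $\varphi'/\varphi$ and with the behaviour near $t=0$, which is harmless here because $I\subset(1/4,\infty)$.
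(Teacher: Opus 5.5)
Your route is the paper's. You use Bessel's inequality for slices of the kernel, Cauchy--Schwarz over $j$ for $\mathcal D_{X,I}(F)$, the cutoff $\la FE_p(s),E_p(s)\ra=\la FE_p(s),\1_\upsilon E_p(s)\ra$ followed by Cauchy--Schwarz in $(s,p)$ for $\mathcal C_{X,I}(F)$, and Maa\ss--Selberg for $\int_{\tau^{-1}(I)}\sum_p\|\1_\upsilon E_p(s)\|^2ds$. Your slicing $x\mapsto\overline{K(x,\cdot)}$ is the right pairing: it gives $|F\psi_j(x)|$ and $|FE_p(s)(x)|$ directly. Reading $\mathcal D,\mathcal C$ without the subtracted averages is also what the paper does, since the lemma is only applied to the average-free quantities of Section~\ref{radial disintegration section}. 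Drop the fallback of ``absorbing'' the averages, though. The sum $\sum_j|\la F\ra_{s_j}|$ is of size $N(X,I)\sup_s|\la F\ra_s|$, so it is not controlled by $N(X,I)^{1/2}\|F\|_{HS}$ in general.

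The step that is wrong is the final matching of constants.
\begin{itemize}
\item In your Maa\ss--Selberg identity the oscillating term is bounded by $q/|t|$ but is not multiplied by $\log\upsilon$. After integrating over $\tau^{-1}(I)$ you therefore get at most $c_I\,q\log\upsilon+c_I\,q+4\pi M(X,I)$, not ``$O_I(q\log\upsilon)$ from the rest''.
\item The term $c_Iq$ is not ``of the same order or better'' than $q^2e^{-4\pi\upsilon}$. It is larger whenever $q<e^{4\pi\upsilon}$; at $\upsilon=1$ that means every surface with fewer than about $2.9\cdot10^5$ cusps.
\item Near $\upsilon=1$ the term $4q\log\upsilon$ vanishes, and you have no lower bound $M(X,I)\gtrsim_I q$ available. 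So you cannot claim the stated constant dominates yours.
\end{itemize}
What your argument actually proves is the lemma with $C(X,I,K)^2=\max\{N(X,I),\,c_Iq(1+\log\upsilon)+M(X,I)\}$.

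This recovers the stated form only for $\upsilon$ bounded away from $1$. For $\upsilon\ge e$ you have $c_Iq\le c_Iq\log\upsilon$, hence $2q\log\upsilon+c_Iq\lesssim_I 4q\log\upsilon$. That is exactly the regime in which the lemma is used, since Proposition~\ref{prop: spectral bound lemma} applies it with $\upsilon+2T+r$ and $T$ large. In any case the extra $c_Iq$ is harmless downstream, because $q_n/\vol(X_n)\to0$ for Plancherel sequences (Lemma~\ref{not too many cusps}). At this step your constant, having no $q^2$ term, would not even need $q_n^2/\vol(X_n)\to0$. The paper only cites \cite{le2024quantum} for its displayed bound, whereas the $q/|t|$ term is visibly present in the exact identity you wrote down. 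So state your constant as it actually comes out, rather than asserting that the stated one dominates it.
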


\begin{rem}
The strategy of the proof of Lemma \ref{lem: HS bound} is adapted from Proposition 3.3 of \cite{le2024quantum}. In adapting the argument, we noticed a gap in the proof. More precisely, the last inequality is claimed to follow directly from an application of the spectral theorem to the kernel of a certain operator. However, the argument using the spectral theorem in this way requires the kernel to be radial, whereas the kernel under consideration is not.

We remedy this issue as follows. First, we apply the spectral theorem not directly to the kernel itself, but to the functions obtained by fixing one variable of the kernel and varying the other. We then introduce a cutoff for the Eisenstein series, which is justified by the compact support of the kernel. Successive applications of the Cauchy--Schwarz inequality then allow us to separate the Hilbert--Schmidt norm of the operator from a factor involving the $L^2$-norm of the truncated Eisenstein series. The latter factor is controlled using the Maaß--Selberg relations, which provide the required quantitative estimate.

We thank Tuomas Sahlsten for a fruitful discussion concerning this proof, and in particular for pointing us to the quantitative estimate coming from the Maaß--Selberg relations.
\end{rem}

\begin{proof}
    As $K$ is by assumption (essentially) bounded, for (almost) every fixed $y\in X$ we have $K(\cdot,y)\in L^2(X)$. The spectral theorem for hyperbolic surfaces (\cite[Thm. 4.7+7.3]{iwaniec2021spectral}) then implies that 
    \begin{equation}\label{eq: norm of kernel}
        \sum_j\Big|\la K(\cdot,y),\psi_j\ra\Big|^2+\frac{1}{4\pi}\int_\R\sum_{p=1}^q\Big|\la K(\cdot,y),E_p(s)\ra\Big|^2ds=\big|\big|K(\cdot,y)\big|\big|_2^2.
    \end{equation}
    Observe that 
    $$\int_X\big|\big|K(\cdot,y)\big|\big|_2^2dy=\Big|\Big| F\Big|\Big|_{HS}^2.$$
    
    We now estimate $\mathcal{D}_{X,I}(F)$ and $\mathcal{C}_{X,I}(F)$ separately. Applying Cauchy--Schwarz first to $L^2(X)$ and then to $\C^{N(X,I)}$ yields 
    \begin{equation}\label{eq: discrete contribution}
        \begin{split}
            \mathcal{D}_{X,I}(F)\le&\sum_j\big|\big| F\psi_j\big|\big|\\
            \le &\sqrt{N(X,I)}\left(\sum_j \big|\big| F\psi_j\big|\big|^2\right)^{1/2}.
        \end{split}
    \end{equation}

    The continuous spectrum requires more attention. Let $\1_\upsilon$ be the characteristic function of $X(\upsilon)$. Then $\la FE_p(s),E_p(s)\ra=\la FE_p(s),\1_\upsilon E_p(s)\ra$. This follows from the kernel having support in $X(\upsilon)\times X(\upsilon)$. This cutoff argument allows us to apply Cauchy--Schwarz inequality first to $L^2(X)$, then to $L^2(\tau^{-1}(I)\times \mathcal{C}(X))$, and obtain 
    \begin{equation}\label{eq: continuous contribution}
        \begin{split}
           &\int_{\tau^{-1}(I)}\sum_p \Big|\la F E_p(s),E_p(s)\ra\Big| ds\le\\
           &\int_{\tau^{-1}(I)}\sum_p\big|\big| FE_p(s)\big|\big|\cdot \big|\big| \1_\upsilon E_p(s)\big|\big| ds\le\\
           &\left(\int_{\tau^{-1}(I)}\sum_p \big|\big| \1_\upsilon E_p(s)\big|\big|^2ds\right)^{\frac{1}{2}}\left(\int_{\tau^{-1}(I)}\sum_p \big|\big| FE_p(s)\big|\big|^2ds\right)^{\frac{1}{2}}.
        \end{split}
    \end{equation}

    Now notice that 
    $$\big|\big| F\psi_j\big|\big|^2=\int_X\big|\la K(\cdot,y),\psi_j\ra\big|^2dy$$
    and 
    $$\big|\big| FE_p(s)\big|\big|^2=\int_X\big|\la K(\cdot,y),E_p(s)\big|^2 dy.$$
    Hence, combining (\ref{eq: norm of kernel}), (\ref{eq: discrete contribution}) and (\ref{eq: continuous contribution}), together with the concavity of the square root ($\sqrt{a}+\sqrt{b}\le\sqrt{2}\sqrt{a+b}$), we obtain
    $$\mathcal{D}_{X,I}(F)+\mathcal{C}_{X,I}(F)\lesssim\max\left\{N(X,I),\int_{\tau^{-1}(I)}\sum_p \big|\big| \1_\upsilon E_p(s)\big|\big|^2ds\right\}^{1/2}\Big|\Big|F\Big|\Big|_{HS}.$$
    
    We conclude the proof by recalling that by means of Maaß-Selberg relations the following estimate holds (see \cite[section 4]{le2024quantum}):
    $$\sum_p||\1_\upsilon E_p(s)||^2\lesssim_I4q\log \upsilon+q^2e^{-4\pi \upsilon}+\frac{-\varphi_X'}{\varphi_X}\left(\frac{1}{2}+is\right).$$

    Integrating over $\tau^{-1}(I)$ yields 
    $$\int_{\tau^{-1}(I)}\sum_p||\1_\upsilon E_p(s)||^2ds\lesssim_I 4q\log \upsilon+q^2e^{-4\pi \upsilon}+M(X,I).$$
\end{proof}

\begin{proof}[proof of Proposition \ref{prop: spectral bound lemma}]
    Let $T>0$ be big enough so that (\ref{eq: bound on time evolved spherical transform}) holds. Using the fact that for every $t\ge0$ the operator $P_t$ is self-adjoint, one obtains 
    $$\Big|\la T_r\psi_j,\psi_j\ra\Big|\le\frac{1}{C_I}\Big|\la \textbf{F}_{T,r}\psi_j,\psi_j\ra\Big|,$$
    $$\Big|\la T_r E_p(s),E_p(s)\ra\Big|\le\frac{1}{C_I}\Big|\la\textbf{F}_{T,r} E_p(s),E_p(s)\ra\Big|.$$
    Therefore, 
    $$\mathcal{D}_{X,I}(T_r)+\mathcal{C}_{X,I}(T_r)\le\frac{1}{C_I}\bigg(\mathcal{D}_{X,I}(\textbf{F}_{T,r})+\mathcal{C}_{X,I}(\textbf{F}_{T,r})\bigg).$$
    The result now follows from Lemma \ref{lem: HS bound} applied to $\textbf{F}_{T,r}$.
\end{proof}

\subsection{Geometric bound}\label{geometric bound}

The only thing left in order to prove Proposition \ref{prop: first big reduction}, and therefore Theorem \ref{QE theorem}, is to provide a suitable bound for $\big|\big| \textbf{F}_{T,r}\big|\big|_{HS}$. The idea is to split the integral defining the Hilbert-Schmidt norm of $\textbf{F}_{T,r}$ in two parts, over points with small and large injectivity radius respectively, in such a way that the thin part can be controlled by Plancherel convergence and the thick part can be controlled by means of an ergodic theory result for averaging operators.
\begin{propx}\label{prop: geometric bound}
    Let $k\in L^\infty(\Hyp\times\Hyp)$ be a $\Gamma$-invariant kernel with propagation bound $R$ and relative compact support in $X(\upsilon)$ for some $\upsilon\ge 1$. For $r\ge 0$, let $T_r$ be the operator $T_r f(x):=\fint_{\partial B_r(x)} k(x,y) f(y) dA(y)$. Then for every $T\ge 0$ the operator $\textbf{F}_{T,r}$ is Hilbert--Schmidt and 
    \begin{align*}
        \Big|\Big| \textbf{F}_{T,r}\Big|\Big|_{HS}^2\lesssim& \frac{||k||_\infty^2}{T \rho(\lambda_1)}\operatorname{vol(X)}\\
        &+ e^{4T+2r}||k||_\infty^2\big(\vol X_{\le 2T+r}+\int_{X(\upsilon+2T+r)}\sharp\big(\Gamma^\star.y\cap \overline{B_{6T+3r}}(y)\Big)dy,
    \end{align*}
    where $\rho(\lambda_1)$ only depends on the spectral gap $\lambda_1$ of $X$.
\end{propx}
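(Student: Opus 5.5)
The plan is to write $\|\textbf{F}_{T,r}\|_{HS}^2=\int_X\int_X|\textbf{K}_{T,r}(x,y)|^2\,dy\,dx$ and split the outer integral according to the injectivity radius of $x$. By Remark \ref{rem: autom ker of F_T}, the kernel $\textbf{K}_{T,r}$ is supported in $X(\upsilon+2T+r)\times X(\upsilon+2T+r)$. Hence only $x$ in this compact piece contribute, and I split it into the thick part, where $\operatorname{inj}_X(x)>2T+r$, and the thin part $X_{\le 2T+r}$. On the thick part, the ball $B_{2T+r}(x)$ embeds isometrically in $X$. Since $K_{T,r}$ has propagation $2T+r$, the automorphic sum $\textbf{K}_{T,r}(x,y)=\sum_\gamma K_{T,r}(x,\gamma.y)$ then has at most one nonzero term. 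On the thin part, I bound the sum crudely by the number of orbit points within the propagation radius.

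\textbf{Thin part and lattice counting.} Lemma \ref{lem: ker of F_T} gives $|K_{T,r}|\le\|k\|_\infty$. Therefore $|\textbf{K}_{T,r}(x,y)|\le\|k\|_\infty\,\sharp\{\gamma: d(x,\gamma.y)\le 2T+r\}$. Using $(\sum a_\gamma)^2$ and Cauchy--Schwarz, I integrate over $y$ and unfold. This produces an integral over $B_{2T+r}(x)\subset\Hyp$, of volume $\lesssim e^{2T+r}$, times a multiplicity factor: the number of $\gamma$ with $\gamma.y$ in a ball around $x$. The multiplicity factor can be bounded by $1+\sharp(\Gamma^\star.y\cap \overline{B_{6T+3r}}(y))$ via the triangle inequality, since two orbit points within $2T+r$ of $x$ differ by a displacement of at most $2(2T+r)$. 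The radius $6T+3r$ leaves room for this. The "$1$" term integrated over $x\in X_{\le 2T+r}$ gives $e^{4T+2r}\|k\|_\infty^2\vol X_{\le2T+r}$. Here one factor $e^{2T+r}$ comes from the volume of the ball and the other from the Cauchy--Schwarz count. The $\Gamma^\star$ term, after exchanging the roles of $x$ and $y$ (the support lies in $X(\upsilon+2T+r)$), gives $e^{4T+2r}\|k\|_\infty^2\int_{X(\upsilon+2T+r)}\sharp(\Gamma^\star.y\cap\overline{B_{6T+3r}}(y))\,dy$. I would not try to distinguish thin points from the rest here. Bounding the $\Gamma^\star$-count over all of $X(\upsilon+2T+r)$ is allowed by the statement.

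\textbf{Thick part: ergodic estimate.} For thick $x$ I have $\textbf{K}_{T,r}(x,y)=K_{T,r}(\tilde x,\tilde y)$, so the contribution is at most $\int_X\|K_{T,r}(\tilde x,\cdot)\|^2_{L^2(\Hyp)}\,dx$. I need this to be $\lesssim \|k\|_\infty^2\vol(X)/(T\rho(\lambda_1))$. Expanding $K_{T,r}=\frac1T\int_0^T k_{t,r}\,dt$ gives
$$\frac{1}{T^2}\int_0^T\!\!\int_0^T\int_X\la k_{t,r}(x,\cdot),k_{t',r}(x,\cdot)\ra\,dx\,dt\,dt'.$$
Each inner product is a matrix coefficient of the form $\la P_tT_rP_t\,P_{t'}T_r^*P_{t'}\ra$ evaluated locally. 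I would use radial average zero: $T_r$ applied to constants, averaged over $X$, vanishes. Combined with the spectral gap, this gives exponential decay of $\|P_{t+t'}\|$ restricted to mean-zero functions (via the spherical transform $\widehat{\rho_t}(s_1)\lesssim e^{-c(\lambda_1)t}$). This is the Le Masson--Sahlsten/Anantharaman ergodic averaging argument. It yields decay in $|t-t'|$ at rate $\rho(\lambda_1)$, and integrating over $t,t'$ gives the $1/(T\rho(\lambda_1))$ factor with $\|k\|_\infty^2\vol X$.

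\textbf{Main obstacle.} The step I expect to be hardest is this thick-part estimate. The local lifted kernel must be identified with a global quantity on $X$ so that the spectral gap applies. The kernel $k$ is not radial, and there are cusps, so the truncation to $X(\upsilon+2T+r)$ and the continuous spectrum have to be handled. I would first try to follow \cite{le2024quantum} Section 6. For $x$ in the thick part, I would replace the $\Hyp$-integrals by integrals over $X$, insert $\chi$-type cutoffs, and invoke the mean-zero spectral decay through the spectral theorem on $L^2(X)$. Eisenstein contributions in $[1/4,\infty)$ decay at least like $e^{-t/2}$ times polynomial factors, which still integrate to $O(1/T)$.
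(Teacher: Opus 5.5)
\textbf{There is a genuine gap in your thick-part estimate.}

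Your thick/thin splitting and your thin-part estimate are fine. They essentially reproduce Lemma \ref{lem: geometric bound}, and your injection $\gamma\mapsto\gamma_0^{-1}\gamma$ into $\{\delta\in\Gamma : d(y,\delta.y)\le 2(2T+r)\}$ is the right way to bound the orbit count. You also correctly reduce the thick part to $\int_{\mathcal F}\int_{\Hyp}|K_{T,r}(x,y)|^2\,dy\,dx$.

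The gap is the bound on this quantity, which carries the whole $\frac{\|k\|_\infty^2\vol(X)}{T\rho(\lambda_1)}$ term; in the paper it is Proposition \ref{prop: application of Nevo}. The mechanism you propose for it is not available. With the normalization $\rho_t=e^{-t/2}\1_{[0,t]}$, the spherical transform $\widehat{\rho_t}(s)$ does not decay in $t$ for tempered $s\in\R$. The lower bound (\ref{eq: bound on time evolved spherical transform}) says precisely that its mean square over $[0,T]$ stays bounded below. At a complementary-series parameter it even grows exponentially. So the spectral gap gives no exponential decay of $P_t$ or $P_tP_{t'}$ on $L^2_0(X)$. This holds both on the discrete part and on the Eisenstein part, which is tempered. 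Your claim that Eisenstein contributions decay like $e^{-t/2}$ is therefore false for this normalization.

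More fundamentally, $\int_{\mathcal F}\int_\Hyp|K_{T,r}|^2$ is a $\Gamma$-trace on $L^2(\Hyp)$, not a quadratic form on $L^2(X)$. Unfolding it gives a quadratic form in $a_r(z,\theta)=k(z,\exp_z(r,\theta))$, which is a function on the unit tangent bundle $\Gamma\backslash G\cong X\times\mathbb S^1$. The form is taken against a convolution kernel on $G$ that is not bi-$K$-invariant when $r>0$. The hypothesis $[k](r)=0$ says exactly that $a_r\in L^2_0(\Gamma\backslash G)$. Your reformulation (``$T_r$ applied to constants has mean zero'') only retains the $K$-invariant projection $\fint_{\mathbb S^1}a_r\,d\theta$. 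The spectral theorem on $L^2(X)$ cannot control the non-$K$-invariant components of $a_r$.

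The missing idea is the paper's change of variables:
\begin{enumerate}
\item Parametrize pairs $(x,y)$ by $l=d(x,y)$ and a frame $(z,\theta)$ on the geodesic through them.
\item Write the lens average defining $K_{T,r}(x,y)$ as $|L_t(l,r)|\,\pi(L_t(l,r))a_r(z,\theta)$ for subsets $L_t(l,r)\subset\PSL_2(\R)$.
\item Apply Nevo's theorem on $L^2_0(\Gamma\backslash G)$. Its exponent $\theta$ is controlled by $\lambda_1$ through the integrability exponent, and no cusp cutoff is needed.
\end{enumerate}
The gain $|L_t(l,r)|^{-\theta}$, with $|L_t(l,r)|\lesssim e^{t+r-l/2}$, integrated in $t$ and then over $l\in[0,2T+r]$, is what yields $1/(T\theta^2)$. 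The decay of the $(t,t')$ cross terms in $|t-t'|$ that you anticipate is consistent with this, but it has to come from equidistribution on $\Gamma\backslash G$.

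Only in the degenerate case $r=0$ does the quantity collapse to $\la\Psi a,a\ra_{L^2(X)}$, with the radial kernel $\Psi=T^{-2}\iint(\rho_t*\rho_{t'})^2\,dt\,dt'$. There an $L^2(X)$ spectral argument of your type does work. However, it works through the decay in $|t-t'|$ of $\widehat{(\rho_t*\rho_{t'})^2}$ on $\operatorname{spec}(X)\smallsetminus\{0\}$, not through $\|P_{t+t'}\|$.
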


We obtain Proposition \ref{prop: geometric bound} through a finer version of Lemma 3.5 in \cite{le2024quantum}.

\begin{lemma}\label{lem: geometric bound}
    Let $K\in L^\infty(\Hyp\times\Hyp)$ be a $\Gamma$-invariant kernel with propagation bound $R>0$ and relative compact support in $X(\upsilon)$ for some $\upsilon\ge 1$. If $F$ is the integral operator on $X$ with kernel $\sum_{\gamma\in\Gamma} K(x,\gamma y)$, then
    \begin{equation}\label{eq: geometric bound}
    \begin{split}
        \big|\big| F\big|\big|_{HS}^2\le& 2\int_\mathcal{F}\int_\Hyp |K(x,y)|^2 dxdy\\
        &+e^{2R}||K||_{\infty}^2\Big(\operatorname{vol} X_{\le R}+\int_{X(\upsilon)}\sharp\big( \Gamma^\star. y\cap \overline{B_{3R}}(y)dy\Big),
    \end{split}
    \end{equation}
    where $\Gamma^\star:=\Gamma\smallsetminus\{\operatorname{id}\}$ and $\mathcal{F}$ is a fundamental domain for $X$ in $\Hyp$.
\end{lemma}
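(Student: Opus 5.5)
We expand the Hilbert--Schmidt norm of $F$ directly. Since the automorphic kernel is $\textbf{K}(x,y)=\sum_{\gamma\in\Gamma}K(x,\gamma.y)$, we write
\begin{align*}
\big|\big|F\big|\big|_{HS}^2=\int_{\mathcal{F}}\int_{\mathcal{F}}\Big|\sum_{\gamma\in\Gamma}K(x,\gamma.y)\Big|^2dxdy=\int_{\mathcal{F}}\int_{\Hyp}\sum_{\gamma\in\Gamma}K(x,y)\overline{K(x,\gamma.y)}\,dy\,dx,
\end{align*}
where the second equality comes from unfolding one of the two sums over $\Gamma$ against the fundamental domain. The term $\gamma=\operatorname{id}$ gives exactly $\int_\mathcal{F}\int_\Hyp|K(x,y)|^2dydx$. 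The factor $2$ in the statement leaves room for a cruder handling of the diagonal if needed. We then split the integration in $x$ according to the injectivity radius. On the thick part $\operatorname{inj}(x)>R$, the ball $B_R(x)$ embeds in $X$. Hence for each $y$ with $K(x,y)\neq0$ (so $d(x,y)\le R$) and each $\gamma\neq\operatorname{id}$, the point $\gamma.y$ cannot also lie in $B_R(x)$. Therefore the nontrivial terms vanish there, and only the identity term survives. This is where the first term of (\ref{eq: geometric bound}) comes from.

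On the thin part $X_{\le R}$ we bound crudely. Each summand satisfies $|K(x,y)K(x,\gamma.y)|\le ||K||_\infty^2$, and it is nonzero only if $y$ and $\gamma.y$ both lie in $\overline{B_R}(x)$. In particular $d(y,\gamma.y)\le 2R$, so $\gamma.y\in\overline{B_{3R}}(y)$ (we keep the generous radius $3R$ to be safe). Also $x$ must lie in $X(\upsilon)$, or at least $y$ must, by the relative compact support. For fixed $x$ this gives
$$\Big|\int_\Hyp\sum_{\gamma\ne\operatorname{id}}K(x,y)\overline{K(x,\gamma.y)}dy\Big|\le ||K||_\infty^2\int_{B_R(x)}\sharp\big(\Gamma^\star.y\cap\overline{B_{3R}}(y)\big)dy.$$
We split this estimate further. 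Part of it is bounded by $||K||_\infty^2\vol(B_R)\lesssim e^{R}||K||_\infty^2$ per unit of $x$, which after integrating over the thin part yields $e^{2R}||K||_\infty^2\vol X_{\le R}$. Here $\vol B_R\le e^{R}$, and the extra factor $e^R$ absorbs the lattice-point count bounded by volume comparison when the orbit count is controlled.

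For the remaining part we swap the order of integration with Fubini. We integrate over $x\in B_R(y)$, whose volume is $\lesssim e^{R}$, and over $y$ ranging in a fundamental domain intersected with the support $X(\upsilon)$. This produces
$$e^{2R}||K||_\infty^2\int_{X(\upsilon)}\sharp\big(\Gamma^\star.y\cap\overline{B_{3R}}(y)\big)dy.$$
The main obstacle is this Fubini step. We must make sure that when the fundamental-domain variable $x$ is exchanged for the variable $y$ in $\Hyp$, the $y$-integral folds back to a single copy of $X(\upsilon)$ without overcounting. To arrange this, we would first rewrite the nontrivial part as $\int_\mathcal{F}\int_\mathcal{F}\sum_{\gamma\neq\delta}K(x,\gamma y)\overline{K(x,\delta y)}$. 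Substituting $\gamma'=\delta^{-1}\gamma$ then puts $y$ in $\mathcal{F}$, at the cost of an extra sum over $\delta$. That extra sum is absorbed because $x\mapsto\delta^{-1}x$ tiles $\Hyp$, giving $\int_\Hyp dx$ restricted to $B_R(y)$. Consistency of the support condition in $X(\upsilon)$ follows from $\Gamma$-invariance of $K$.
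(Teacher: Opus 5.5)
Your argument is correct, but it is organized differently from the paper's.

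\textbf{Your route.} You unfold one copy of the automorphic sum, writing $\|F\|_{HS}^2=\int_{\mathcal{F}}\int_{\Hyp}K(x,y)\overline{\mathbf{K}(x,y)}\,dy\,dx$ with $\mathbf{K}(x,y)=\sum_{\gamma}K(x,\gamma.y)$. The identity term then gives $\int_{\mathcal{F}}\int_\Hyp|K|^2$ exactly. The off-diagonal terms are bounded pointwise by $\|K\|_\infty^2\,\sharp\{\gamma\neq\operatorname{id}: d(y,\gamma.y)\le 2R\}$ and refolded in $x$ by $\Gamma$-invariance. Your substitution $\gamma'=\delta^{-1}\gamma$ is a valid way to do this; equivalently, $\int_{\mathcal{F}}\int_{B_R(x)}g(y)\,dy\,dx=\vol(B_R)\int_{\mathcal{F}}g$ for $\Gamma$-invariant $g\ge 0$.

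\textbf{The paper's route.} The paper never expands the square. It splits first $y$ and then $x$ into $X_{>R}$ and $X_{\le R}$, and observes that the automorphic sum has at most one nonzero term as soon as one of the two points is thick; this is the source of the factor $2$. On the thin$\times$thin part it applies Cauchy--Schwarz, $|\sum_\gamma K(x,\gamma.y)|^2\le\sharp(\Gamma.y\cap\overline{B_R}(x))\sum_\gamma|K(x,\gamma.y)|^2$. That orbit count includes the identity, which is exactly where $\vol X_{\le R}$ comes from.

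\textbf{Comparison.} Your route gives the sharper bound
\[
\|F\|_{HS}^2\le\int_{\mathcal{F}}\int_\Hyp|K|^2+\vol(B_R)\|K\|_\infty^2\int_{X(\upsilon)}\sharp\big(\Gamma^\star.y\cap\overline{B_{2R}}(y)\big)\,dy,
\]
with constant $1$, no $\vol X_{\le R}$ term, and radius $2R$. It also makes the thick/thin split superfluous, because the count vanishes automatically at thick points. The paper's route manipulates only nonnegative quantities, so it needs no absolute-convergence check before expanding the square. In your case that check is easy, from finite propagation and compactness of $X(\upsilon)$.

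\textbf{Two things to clean up.}
\begin{itemize}
\item The paragraph that extracts $e^{2R}\|K\|_\infty^2\vol X_{\le R}$ from ``part'' of the cross term is unjustified as written. Orbit counts in the thin part are not uniformly bounded; near a closed geodesic of length $\ell$ they grow like $R/\ell$. Simply delete that paragraph: your folding step already bounds the entire off-diagonal term, and the nonnegative $\vol X_{\le R}$ term can be added for free.
\item $\vol B_R=2\pi(\cosh R-1)$ is not $\le e^{R}$ for large $R$, since it behaves like $\pi e^R$. It is, however, $\le e^{2R}$ for all $R>0$, which is all the stated constant requires.
\end{itemize}
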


\begin{proof}
    The Hilbert-Schmidt norm of $F$ is 
    $$\big|\big| F\big|\big|_{HS}^2=\int_X\int_X \Big|\sum_{\gamma\in\Gamma} K(x,\gamma. y)\Big|^2 dxdy.$$

We split the integral in the variable $y$ in two parts: over the points with small and large injectivity radius:
\begin{equation}\label{eq: first HS reduction}
\begin{split}
    \big|\big| F\big|\big|_{HS}^2=&\int_{X_{>R}}\int_X\Big|\sum_{\gamma\in\Gamma} K(x,\gamma .y)\Big|^2 dxdy\\
    &+\int_{X_{\le R}}\int_X\Big|\sum_{\gamma\in\Gamma} K(x,\gamma. y)\Big|^2 dxdy.
\end{split}
\end{equation}
First, notice that the sum in the first integral reduces to one term. In fact, let $y\in X_{>R}$ and $x\in X$ be such that $d(x,y)\le R$. Then $d(x,\gamma .y)>R$ for every $\gamma\in\Gamma\smallsetminus\{\operatorname{id}\}$, for otherwise
$$d(y,\gamma. y)\le d(y,x)+d(x,\gamma .y)\le 2R,$$
which is a contradiction with $y\in X_{>R}$. Hence, the first summand in (\ref{eq: first HS reduction}) is 
$$\int_{X_{>R}}\int_X\sum_{\gamma\in\Gamma}|K(x,\gamma .y)|^2dxdy\le\int_\Hyp\int_X |K(x,y)|^2 dxdy.$$
Now, by splitting the integral in the variable $x$ as done previously for $y$, the second summand in (\ref{eq: first HS reduction})becomes 
$$\int_{X_{\le R}}\int_{X_{>R}}\Big|\sum_{\gamma\in\Gamma} K(x,\gamma. y)\Big|^2 dxdy+\int_{X_{\le R}}\int_{X_{\le R}}\Big|\sum_{\gamma\in\Gamma} K(x,\gamma .y)\Big|^2 dxdy.$$

The sum in the first integral reduces to one term for the same reason discussed above, so that 
\begin{equation}\label{eq: second HS reduction}
    \Big|\Big| F\Big|\Big|_{HS}^2\le 2\int_\mathcal{F}\int_\Hyp |K(x,y)|^2 dxdy+\int_{X_{\le R}}\int_{X_{\le R}}\Big|\sum_{\gamma\in\Gamma} K(x,\gamma. y)\Big|^2dxdy.
\end{equation}

We have left to bound the second summand in the RHS of (\ref{eq: second HS reduction}). First of all, we notice that the integrals are supported in $X(\upsilon)\cap X_{\le R}$, due to the kernel $K$ having relative compact support in $X(\upsilon)$. Also, Cauchy--Schwarz inequality yields 
$$\Big|\sum_{\gamma\in\Gamma} K(x,\gamma .y)\Big|^2\le \sharp\Big(\Gamma.y\cap \overline{B_R}(x)\Big)\Big(\sum_{\gamma\in\Gamma} |K(x,\gamma .y)|^2\Big).$$
Now observe that for $x\in X_{\le R}$ and $y\in X$
$$\sharp\Big(\Gamma. y\cap\overline{B_R}(x)\Big)\le\sharp\big(\Gamma.y\cap\overline{B_{3R}}(y)\Big).$$
In fact, there exists $\gamma_x\in\Gamma\smallsetminus\{\operatorname{id}\}$ such that $d(x,\gamma_x. x)\le R$, so that if $\gamma\in\Gamma$ is such that $d(x,\gamma. y)\le R$ we have 
$$d(y,\gamma^{-1}\gamma_x \gamma. y)\le 3R.$$
Therefore,
$$\int_{X_{\le R}}\int_{ X_{\le R}} \Big|\sum_{\gamma\in\Gamma} K(x,\gamma. y)\big|^2 dxdy\le$$
$$\int_{ X_{\le R}\cap X(Y)}\sharp\big(\Gamma. y\cap \overline{B_{3R}}(y)\Big)\left(\int_X\sum_{\gamma\in\Gamma}|K(x,\gamma .y)|^2dx\right)dy.$$

The inner integral has the following bound: 
$$\int_X\sum_{\gamma\in\Gamma}|K(x,\gamma. y)|^2dx\le\int_\Hyp |K(x,y)|^2 dx=\int_{B_R(y)}|K(x,y)|^2 dx\le e^{2R}||K||_\infty^2,$$
where $e^{2R}$ bounds the area of a hyperbolic ball of radius $R$. 
Summarizing, 
$$\Big|\big|F\Big|\Big|_{HS}^2\le 2\int_X\int_\Hyp |K(x,y)|^2 dxdy+e^{2R}||K||_\infty^2\int_{X_{\le R}\cap X(\upsilon)}\sharp\Big(\Gamma.y\cap\overline{B_{3R}}(y)\Big) dy.$$
We conclude the proof by observing that the contribution of the identity element in the last integral is controlled by $\operatorname{vol} X_{\le R}$ and the contribution of all non-identity elements is controlled by 
$$\int_{X(\upsilon)}\sharp\Big(\Gamma^\star.y\cap \overline{B_{3R}}(y)\Big)dy.$$
    
\end{proof}

In order to proof Proposition \ref{prop: geometric bound} we want to apply Lemma \ref{lem: geometric bound} to the kernel $K_{T,r}$. We also use the following result. 

\begin{propx}\label{prop: application of Nevo}
     $$\int_\mathcal{F}\int_\Hyp |K_{T,r}(x,y)|^2 dxdy\lesssim\frac{||k||_\infty}{T\rho(\lambda_1)}\vol X,$$
    where $\theta(\lambda_1)$ is a positive constant only depending on the spectral gap of the Laplacian.
\end{propx}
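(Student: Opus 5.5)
Since $k$ is $\Gamma$-invariant, the left-hand side is
$$\int_{\mathcal F}\int_{\Hyp}|K_{T,r}(x,y)|^2dy\,dx = \big\|F_{T,r}\big\|_{L^2(\mathcal F\times\Hyp)}^2 .$$
I will treat this as the squared norm of an operator on the Hilbert space $L^2(\Gamma\setminus G)$, and estimate it with an ergodic theorem for ball averages (Nevo's theorem), in the spirit of Lemma 3.5 of \cite{le2024quantum} and \cite{abert2022eigenfunctions}.

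\textbf{Step 1: unfold the kernel.} By Lemma \ref{lem: ker of F_T},
$$K_{T,r}=\frac1T\int_0^T k_{t,r}\,dt,\qquad k_{t,r}(x,y)=e^{-t}\int_{B_t(x)}\fint_{\partial B_r(w)}k(w,z)\1_{B_t(y)}(z)\,dA(z)\,dw .$$
Expanding $|K_{T,r}|^2$ gives a double time integral $\frac1{T^2}\int_0^T\int_0^T\langle k_{t,r},k_{t',r}\rangle\,dt\,dt'$. Each inner product is an integral over $\mathcal F\times\Hyp$.

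Fix $x\in\mathcal F$ and integrate over $y\in\Hyp$. This gives
$$\int_\Hyp \1_{B_t(y)}(z)\,\1_{B_{t'}(y)}(z')\,dy=\vol\big(B_t(z)\cap B_{t'}(z')\big),$$
which is a radial function of $d(z,z')$. So the inner product equals $e^{-t-t'}$ times
$$\int_{\mathcal F}\iint k(w,z)\,\overline{k(w',z')}\,\vol\big(B_t(z)\cap B_{t'}(z')\big)\,\1_{B_t(x)}(w)\,\1_{B_{t'}(x)}(w')\;dA(z)\,dA(z')\,dw\,dw'\,dx .$$
Here $w,w'$ range over the balls $B_t(x),B_{t'}(x)$, and $z,z'$ over the circles $\partial B_r(w),\partial B_r(w')$.

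\textbf{Step 2: rewrite as correlations of ball averages on $\Gamma\setminus G$.} Introduce the function $\kappa_r(w,\theta)=k(w,\exp_w(r,\theta))$ on the unit tangent bundle $\Gamma\setminus G$. This is an $L^\infty$ function with $\|\kappa_r\|_\infty\le\|k\|_\infty$.

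The key input is the radial average zero hypothesis. It says exactly that $\int_{\Gamma\setminus G}\kappa_r=[k](r)=0$ for every $r$, so $\kappa_r\in L^2_0(\Gamma\setminus G)$.

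With this notation the quantity above becomes an expression of the form
$$\frac{1}{T^2}\int_0^T\int_0^T\big\langle \beta_{t}\,\kappa_r,\ \beta_{t'}\,\kappa_r\big\rangle\, c(t,t')\,dt\,dt',$$
where $\beta_t$ denote ball-averaging (radial convolution) operators on $\Gamma\setminus G$, normalized by the $e^{-t}$ factors, and $c(t,t')$ is bounded uniformly in $t,t'$.

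\textbf{Step 3: apply Nevo's theorem on the diagonal.} Nevo's mean ergodic theorem with spectral gap gives
$$\|\beta_t f\|_2\lesssim e^{-\rho(\lambda_1)t}\,\|f\|_2\qquad\text{for } f\in L^2_0 .$$
Apply Cauchy--Schwarz in $(t,t')$ to the double integral. Then $\|\kappa_r\|_2^2\le\|k\|_\infty^2\vol X$, and the time average $\frac1T\int_0^T e^{-\rho t}\,dt\lesssim\frac{1}{T\rho}$. This yields the stated bound $\frac{\|k\|_\infty^2}{T\rho(\lambda_1)}\vol X$.

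\textbf{Main obstacle.} The delicate step is the rewriting in Step 2. The kernel is not radial, so the composition $P_tT_rP_t$ does not simply factor as a product of spherical transforms. I would handle this by writing $\|F_{T,r}\|^2$ on $L^2(\mathcal F\times\Hyp)$ as $\frac1T\int_0^T\|P_t T_r P_t\|^2$-type quantities. Each such term I would express as $\langle A_t\kappa_r, A_t\kappa_r\rangle$ for a $G$-equivariant averaging operator $A_t$, obtained by composing the left and right $P_t$ with the geodesic-flow shift by $r$.

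If the cross terms $t\neq t'$ do not decay well, I would fall back on the cruder bound
$$\Big|\frac1T\int_0^T k_{t,r}\,dt\Big|^2\le\frac1T\int_0^T|k_{t,r}|^2\,dt .$$
With this, I only need the diagonal decay of the single ball average $e^{-t}\vol(B_t)\,\beta_t$ applied to the mean-zero $\kappa_r$. Since the $e^{-t}$ normalization cancels the volume growth $\vol(B_t)\sim e^{t}$, these terms are $O(1)$ times the Nevo decay factor. Averaging over $t\in[0,T]$ then gives the $1/(T\rho)$ gain.
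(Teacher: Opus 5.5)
\textbf{There is a genuine gap in Step 2.} That step carries the entire content of the proposition, but it is only asserted, and in the form you state it, it cannot hold.

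It is a pure rewriting that never uses $[k](r)=0$. So if $\langle k_{t,r},k_{t',r}\rangle$ equalled $c(t,t')\langle\beta_t\kappa_r,\beta_{t'}\kappa_r\rangle$ with $c$ bounded and $\|\beta_t\|\lesssim 1$ (as your Step 3 presupposes), you would get $\int_{\mathcal F}\int_{\Hyp}|K_{T,r}|^2\lesssim\|k\|_\infty^2\vol X$ uniformly in $T$, for \emph{every} bounded kernel. This is false. Take the radial kernel $k=\mathbf 1_{\{d(x,y)\le M\}}$ with $M\ge r$, and write $l=d(x,y)$.
\begin{itemize}
\item For $l\ge r$ and $t\ge (l+r)/2+1$, the lens $B_t(x)\cap B_{t-r}(y)$ contains a ball of radius $t-(l+r)/2$, so $k_{t,r}(x,y)\ge e^{-t}\vol\big(B_t(x)\cap B_{t-r}(y)\big)\gtrsim e^{-(l+r)/2}$.
\item Hence $K_{T,r}(x,y)\gtrsim e^{-(l+r)/2}$ for $r\le l\le T$, and integrating against $\sinh l$ shows the left-hand side is $\gtrsim e^{-r}\,T\vol X$, which grows with $T$.
\end{itemize}
Your Step 3 would even produce $(T\rho)^{-2}$, another sign that the bookkeeping is off.

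The underlying reason is that $k_{t,r}(x,y)$ is not a ball average of $\kappa_r$. It averages $\kappa_r$ over $\{(w,\eta): w\in B_t(x),\ \exp_w(r,\eta)\in B_t(y)\}$. This is a lens that depends on $l=d(x,y)$ and has measure $O(e^{t+r-l/2})$. Nevo's theorem therefore gives decay only like $e^{-\theta(t-l/2)}$, with $\theta$ the Nevo exponent, and no decay at all when $l$ is close to $2t+r$.

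\textbf{The same point breaks your fallback.} Pairs with $l$ near $2t$ carry volume $\asymp e^{2t}$. For them, $|k_{t,r}|^2$ is $e^{-2t}$ times the square of an average over a set of bounded measure. So $\int_{\mathcal F}\int_{\Hyp}|k_{t,r}|^2$ is of order $\theta^{-1}\|\kappa_r\|_2^2$ with no decay in $t$, and averaging $\frac1T\int_0^T$ of it gains nothing.

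What one can actually prove from the lens description is the off-diagonal bound
$|\langle k_{t,r},k_{t',r}\rangle|\lesssim_r\theta^{-1}e^{-\theta|t-t'|}\|\kappa_r\|_2^2$.
The factor $1/T$ comes from this decay in $|t-t'|$, which is exactly what Cauchy--Schwarz in $t$ throws away.

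\textbf{The missing idea, which is the paper's route,} is to parametrize the pair $(x,y)$ by the geodesic joining them. Use $(z,\vartheta)\in\mathcal F\times\mathbb S^1$ and $l\in[0,2T+r]$, with measure $\sinh l\,dz\,d\vartheta\,dl$. Then write $k_{t,r}(x,y)=e^{-t}|L_t(l,r)|\,\pi(L_t(l,r))\kappa_r(z,\vartheta)$ for lens-shaped sets $L_t(l,r)\subset G$ with $|L_t(l,r)|=O(e^{t+r-l/2})$, empty for $t<(l-r)/2$. For fixed $l$, apply Minkowski's inequality in $t$, then Nevo's theorem to the mean-zero $\kappa_r$. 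This gives $\sinh l\,\big(\int_{(l-r)/2}^T e^{-t}|L_t(l,r)|^{1-\theta}\,dt\big)^2\lesssim_r\theta^{-2}$, because the integrand lives in a window of length $O(1/\theta)$ above $l/2$. Integrating over $l\in[0,2T+r]$ against the prefactor $T^{-2}$ yields $\|k\|_\infty^2\vol X/(T\theta^2)$.
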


The above is analogous to Lemma 3.6 in \cite{le2024quantum}, where the role of the test function is taken by the distributional kernel of $T_r$ supported on $d(x,y)=r$. The special case of multiplication operators is retrieved when $r=0$ and the assumption in \cite{le2024quantum} that $\la a\ra=0$ is here replaced by the assumption $[k](r)=0$. We show how to adapt the proof to our case in section \ref{ergodic theorem and averaging operators}.

\begin{proof}[proof of Proposition \ref{prop: geometric bound}]
    We recall from Lemma \ref{lem: ker of F_T} that the automorphic kernel $\textbf{K}_{T,r}(x,y)=\sum_{\gamma\in\Gamma} K_{T,r}(x,\gamma.y)$ of $\textbf{F}_{T,r}$ is such that $K_{T,r}$ has propagation bound $2T+r$ and relative compact support in $X(\upsilon+2T+r)$. Therefore, applying Lemma \ref{lem: geometric bound} to $\textbf{F}_{T,r}$ we get 
    \begin{align*}
        \Big|\Big| \textbf{F}_{T,r}\Big|\Big|_{HS}^2\le&2\int_\mathcal{F}\int_\Hyp |K_{T,r}(x,y)|^2 dxdy\\
        &+e^{2(2t+r)}||K_{T,r}||_\infty^2\big(\vol X_{\le 2T+r}+\int_{X(\upsilon+2T+r)}\sharp\big(\Gamma^\star.y\cap\overline{B_{3(2T+r)}}(y)\Big)dy.
    \end{align*}
    
    We conclude by observing that $||K_{T,r}||_\infty\le ||k||_\infty$ and by applying Proposition \ref{prop: application of Nevo} to estimate the first integral.

\end{proof}

\begin{proof}[proof of Proposition \ref{prop: first big reduction}]
        Let $(k_n)_{n\in\N}$ be a sequence of measurable kernels with uniformly relative compact support, uniformly finite propagation and radial average zero. Firstly, recall that Plancherel sequences enjoy the spectral convergence property $N(X_n,I)+M(X_n,I)\approx \vol (X_n)$ as $n\to\infty$ (this is the content of Prop. \ref{prop: spectral convergence}). Then, Proposition \ref{prop: spectral bound lemma} implies that for $T$ big enough
        $$\operatorname{Dev}_{X_n,I}((T_n)_r)^2\le \frac{C(X_n,I,K_{T,r,n})}{N(X_n,I)+M(X_n,I)}\frac{\big|\big|\textbf{F}_{T,r,n}\big|\big|_{HS}^2}{N(X_n,I)+M(X_n,I)}.$$
         
        For the first factor, notice that the condition $\frac{q_n^2}{\vol X_n}\to 0$ implies that 
        $$\lim_{n\to\infty}\frac{ C(X_n,I, K_{T,r,n})}{N(X_n,I)+M(X_n,I)}\le 1.$$

        For the second factor, Let $L$ be a uniform upper bound on the $L^\infty$ norm of $k_n$, and $\beta$ be a uniform lower bound on the spectral gap of $X_n$. These exist by assumption. Also, let $\upsilon>0$ be big enough so that $k_n$ has relative compact support in $X_n(\upsilon)$ for every $n$. Then Proposition \ref{prop: geometric bound} implies that 
        \begin{align*}
            \frac{\big|\big| \textbf{F}_{T,r}\big|\big|_{HS}^2}{\vol X_n}&\lesssim \frac{L^2}{T\rho(\beta)}+\\
            &L^2e^{4T+2r}\Bigg(\frac{\vol (X_n)_{\le 2T+r}}{\vol X_n}+\frac{1}{\vol X_n}\int_{X(\upsilon+2T+r)}\sharp\Big(\Gamma^\star_n.y\cap\overline{B_{6T+3r}}(y)\Big)dy\Bigg).
        \end{align*}
        Now, $\frac{\vol (X_n)_{\le 2T+r}}{\vol X_n}\xrightarrow[]{n\to\infty}0$ because Plancherel sequence are Benjamini--Schramm convergent to $\Hyp$. Moreover, Lemma \ref{lem: Plancherel and parabolic contribution} implies that 
        $$\frac{1}{\vol X_n}\int_{X(\upsilon+2T+r)}\sharp\Big(\Gamma^\star_n.y\cap\overline{B_{6T+3r}}(y)\Big)dy\xrightarrow[]{n\to\infty}0.$$
        Summarizing, for every $T$ big enough 
        $$\lim_{n\to\infty}\operatorname{Dev}_{X_n,I}(T_r)\lesssim \frac{1}{T}.$$
        Letting $T\to\infty$ concludes the proof.
 \end{proof}

\subsubsection{Quantitative ergodic estimate}\label{ergodic theorem and averaging operators}

This section is dedicated to the proof of Proposition \ref{prop: application of Nevo}. We adapt the technique developed in \cite{le2017quantum}. The main tool is an ergodic Theorem  about the equidistribution of averaging operators applied to mean zero functions in some probability space. Before stating the Theorem we need some terminology. Let $G$ be a locally compact group. Fix a Haar measure on $G$ and assume that $G$ admits a measure-preserving action $\pi$ on a probability space $(Y,\nu)$. Then the action of $G$ defines a representation on $L^2(Y)$ by $\pi(g)f(x):=f(\pi(g^{-1})x)$. Given a measurable set $L\subset G$ of finite volume we define an averaging operator on $L^2(Y,\nu)$ by
$$\pi (L)f(x):=\frac{1}{|L|}\int_L f(\pi(g^{-1})x)dg.$$
The \textit{matrix coefficients} of the representation of the representation are 
$$C_{f,h}(g):=\la \pi(g) f,h\ra_{L^2(Y,\nu)}$$
for $g\in G$ and $f,h\in L^2_0(Y):=\{f\in L^2(Y) \ \colon \int_Y fd\nu=0\}$. The \textit{integrability exponent} of $L_{|L^2_0(Y)}$ is
$$q_0:=\inf\{ q>0\ \colon C_{f,h}\in L^p(G) \text{ for all } f,h \text{ in a dense subset of }L^2_0(Y)\}.$$
The action of $G$ on $Y$ has a \textit{spectral gap} if $q_0<\infty$.

\begin{thm}[Nevo's ergodic Theorem, \cite{gorodnik2015quantitative} 4.3]\label{Nevo's ergodic theorem}
    Let $G$ be a simple Lie group with a measure preserving action on a probability space $(Y,\nu)$ with a spectral gap. Then there exist $\theta,C>0$ such that for $t\ge 0$ and a family $L_t\subset G$ of measurable sets of finite measure we have 
    $$\Big|\Big| \pi(L_t) f-\int_Yf d\nu\Big|\Big|_{L^2(Y,\nu)}\le \frac{C}{|L_t|^\theta} ||f||_{L^2(Y,\nu)}$$
    for any $f\in L^2(Y,\nu)$. The constant $C$ depends only on the group $G$ and $\theta$ depends only on the integrability exponent.
\end{thm}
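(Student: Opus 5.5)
The plan is to reduce the estimate to an operator-norm bound for the averaging operator restricted to $L^2_0(Y)$. Write $f=\int_Y f\,d\nu+f_0$ with $f_0\in L^2_0(Y)$. Since $\pi(L_t)$ fixes constants, we have $\pi(L_t)f-\int_Y f\,d\nu=\pi(L_t)f_0$, and $\|f_0\|\le\|f\|$. So it suffices to show
\[
\big\|\pi(L_t)|_{L^2_0(Y)}\big\|\le C|L_t|^{-\theta}.
\]
We write $\beta_L=\frac{1}{|L|}\1_L$, so that $\pi(L)=\pi(\beta_L)$ is the representation $\pi_0:=\pi|_{L^2_0(Y)}$ integrated against the probability density $\beta_L$ on $G$.

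\textbf{Step 1: pass from the integrability exponent to the regular representation.} Since $q_0<\infty$, fix an integer $n$ with $2n>q_0$. Matrix coefficients of $\pi_0$ on a dense subspace lie in $L^{q_0+\eps}(G)$. Matrix coefficients of the tensor power $\pi_0^{\otimes n}$ on the algebraic tensor product are then products of $n$ such coefficients, so they lie in $L^{(q_0+\eps)/n}(G)\cap L^\infty(G)\subset L^{2+\delta}(G)$ for suitable $\eps$. I would then invoke the Cowling--Haagerup--Howe theorem: a representation with a dense set of $L^{2+\delta}$ matrix coefficients, for every $\delta>0$, is weakly contained in the regular representation $\lambda_G$. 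Combining this with the standard tensor-power inequality gives, for any probability density $\beta$ on $G$,
\[
\|\pi_0(\beta)\|\le\|\pi_0^{\otimes n}(\beta)\|^{1/n}\le\|\lambda_G(\beta)\|^{1/n}.
\]
The first inequality holds because $\beta\ge 0$; one can see this via the positive-definite function argument, or by writing $\|\pi_0(\beta)\|^{2}=\|\pi_0(\beta^*\ast\beta)\|$ and using $|C_{f,f}|^n=|C_{f^{\otimes n},f^{\otimes n}}|$.

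\textbf{Step 2: norm of $\lambda_G(\beta_L)$ via Kunze--Stein.} For a connected simple Lie group with finite center, such as $G=\PSL_2(\R)$ in our application, the Kunze--Stein phenomenon gives $L^p(G)\ast L^2(G)\subset L^2(G)$ for every $1\le p<2$, with a bound $C_p$. Hence
\[
\|\lambda_G(\beta_L)\|\le C_p\|\beta_L\|_p=C_p|L|^{1/p-1}.
\]
Fixing such a $p$, for instance $p=3/2$, and combining with Step 1 yields
\[
\|\pi(L_t)f_0\|\le C_p^{1/n}|L_t|^{-(1-1/p)/n}\|f_0\|.
\]
This gives the claim with $\theta=(1-1/p)/n$, which depends only on $q_0$ through $n$, and with $C=C_p^{1/n}$, which depends only on $G$ (the $n$-th root can be absorbed by taking $C=\max(1,C_p)$).

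The main obstacle is Step 1, namely justifying the weak containment of $\pi_0^{\otimes n}$ in $\lambda_G$ from integrability of matrix coefficients only on a dense subset. Here I would cite Cowling--Haagerup--Howe directly rather than reprove it. If one wants to avoid it, the fallback for $G=\PSL_2(\R)$ is to use the explicit decay of $K$-finite matrix coefficients of nontrivial irreducible unitary representations. These are bounded by the Harish-Chandra function $\Xi^{2/q_0-\eps}$, and the same is then true for $\pi_0$ after decomposing into irreducibles (no trivial component occurs on $L^2_0$). One then bounds $\|\pi(L)f\|^2=\frac{1}{|L|^2}\int_L\int_L C_{f,f}(h^{-1}g)\,dg\,dh$ directly using the $L^{2+\delta}$-integrability of $\Xi^{1/n}$ together with the Kunze--Stein estimate applied to the corresponding integral.
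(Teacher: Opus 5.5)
The paper gives no proof of this statement; it quotes Gorodnik--Nevo. Your overall route is the standard argument behind that citation: reduce to $L^2_0(Y)$, transfer the norm of $\pi_0(\beta_L)$ to the regular representation via tensor powers and Cowling--Haagerup--Howe, then apply Kunze--Stein. The reduction to $L^2_0$ and Step 2 are fine, for $G$ connected with finite centre, which Kunze--Stein requires and which the application $G=\PSL_2(\R)$ satisfies.

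There is a genuine gap at the first inequality of Step 1, $\|\pi_0(\beta)\|\le\|\pi_0^{\otimes n}(\beta)\|^{1/n}$. Positivity of $\beta$ does not imply it. Take $G=\R$, $\pi(x)=e^{i\xi x}$ with $\xi>0$, and $\beta=\frac{n\xi}{2\pi}\1_{[0,2\pi/(n\xi)]}$. Then $\pi^{\otimes n}(\beta)=0$ while $\pi(\beta)\neq0$ for $n\ge2$. Similar failures occur in $\PSL_2(\R)$ for a discrete series representation and $\beta$ concentrated on a short arc of $K$.

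Your sketched justification breaks at exactly this point. Write $\phi=\beta^*\ast\beta$ and $c=C_{f,f}$ for a unit vector $f$. Then $\|\pi_0(\beta)f\|^2=\int_G\phi\,c\le\int_G\phi\,|c|\le\big(\int_G\phi\,|c|^n\big)^{1/n}$. However, the operator norm of $\pi_0^{\otimes n}(\phi)$ only controls $\big|\int_G\phi\,c^n\big|$, not $\int_G\phi\,|c^n|$. The identity $|C_{f,f}|^n=|C_{f^{\otimes n},f^{\otimes n}}|$ does not move the absolute value outside the integral.

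The standard repair, which is why Gorodnik--Nevo use an even integer $n_e$, is to make $|c|^n$ itself a matrix coefficient. Take $n=2m$ even with $2n>q_0$, and set $\sigma=(\pi_0\otimes\overline{\pi_0})^{\otimes m}$ and $w=(f\otimes\overline f)^{\otimes m}$, so that $|c|^{n}=C_{w,w}$ and $\|w\|=1$. The coefficients of $\sigma$ on the algebraic tensor product of your dense subspace are products of $n$ coefficients of $\pi_0$ or $\overline{\pi_0}$. Exactly as in your argument, they lie in $L^2(G)$, so $\sigma\prec\lambda_G$. Then
\[
\|\pi_0(\beta)f\|^2\le\langle\sigma(\phi)w,w\rangle^{1/n}\le\|\lambda_G(\phi)\|^{1/n}=\|\lambda_G(\beta)\|^{2/n},
\]
which gives $\|\pi_0(\beta)\|\le\|\lambda_G(\beta)\|^{1/n}$ with $n$ even. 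Step 2 then goes through unchanged with $\theta=(1-1/p)/n$.

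Separately, your fallback via $\Xi$-bounds for $K$-finite vectors would not by itself yield the theorem. Those coefficient bounds carry constants depending on the $K$-types involved, so they do not give an operator-norm bound uniform over all $f\in L^2_0(Y)$. Avoiding this is precisely what the tensor-power trick is for.
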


We will apply Theorem \ref{Nevo's ergodic theorem} to $G=PSL_2(\R)$ and $Y= X\times\mathbb{S}^1$, where $X=\Gamma\setminus \Hyp$ is the hyperbolic surface we have fixed since section \ref{spectral bound section}. $\PSL_2(\R)$ acts on $Y$ by $\pi(g) \Gamma x:=\Gamma xg^{-1}$, so that the induced representation on $L^2(Y )$ is right multiplication
$$\pi(g) f(\Gamma x)=f(\pi(g^{-1})\Gamma x)=f(\Gamma xg).$$
It is a known fact that the integrability exponent for the action of $\PSL_2(\R)$  only depends on the spectral gap of the Laplacian on $\Gamma\setminus \Hyp$ (see \cite[Chapter V, Proposition 3.1.5]{howe2012non}).

\begin{proof}[proof of Proposition \ref{prop: application of Nevo}]
    Let $\mathcal{F}$ be a fundamental domain of $X$ in $\Hyp$. Our goal is to bound 
    $$\int_\mathcal{F}\int_\Hyp |K_{T,r}(x,y)|^2 dxdy=$$
    $$\int_\mathcal{F}\int_\Hyp\Big|\frac{1}{T}\int_0^T \frac{e^{-t}}{2\pi\sinh r}\int_{B_t(x)}\int_{\partial B_r(w)\cap B_t(y)}k(w,z)dA(z)dwdt\Big|^2dxdy.$$

By writing the integral in the variable $x$ in polar coordinates, using invariance of the Liouville measure under the geodesic flow (or using a change of variable similar to the one introduced in \cite[Lemma 7.1]{le2017quantum}) and recalling that the kernel $K_{T,r}$ has propagation bound $2T+r$ we can rewrite the above integral as
%$$\int_0^{2T+r}\sinh l\int_{\mathbb{S}^1}\int_\mathcal{F}\Big|\frac{1}{T}\int_{l/2}^T\frac{e^{-t}}{2\pi\sinh r}\int_{B_t\Big(\exp_z\big(r-\tfrac{l}{2},\theta\big)\Big)\cap B_t(\exp_z\big(r+\tfrac{l}{2},\theta\big)\Big)}\int_{\partial B_r(w)\cap B_t\Big(\exp_z\big(r+\tfrac{l}{2},\theta\big)\Big)} k(w,z)dA(z)dw\Big|^2dzd\theta dl$$

$$\int_0^{2T+r}\sinh l\int_{\mathbb{S}^1}\int_\mathcal{F}\Big|\frac{1}{T}\int_{\tfrac{l-r}{2}}^T\frac{e^{-t}}{2\pi\sinh r}\int_{B_t(p^-_z)\cap B_{t+r}(p^+_z)}\int_{\partial B_r(w)\cap B_t(p^+_z)} k(w,v)dA(v)dwdt\Big|^2dzd\theta dl,$$

where we set 
$$p^-_z:=\exp_z\big(\tfrac{l-r}{2},\theta\big)\quad \text{and}\quad  p^+_z:=\exp_z\big(\tfrac{l+r}{2},\theta\big).$$

\begin{figure}[htp]
    \centering
    \includegraphics[width=1\linewidth]{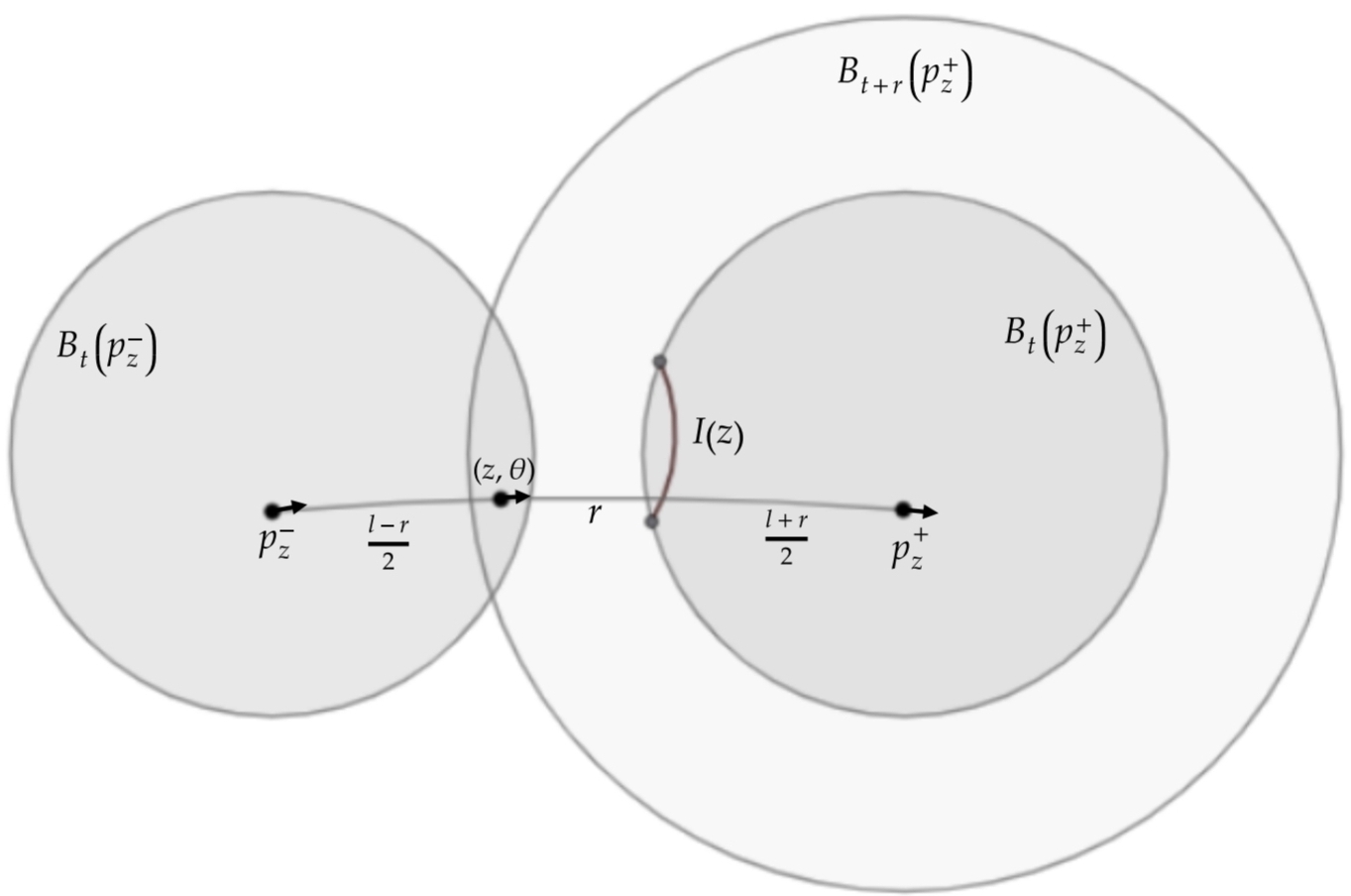}
%    \caption{add caption}
%   \label{figure 1}
\end{figure}

Consider the map $a_r:\mathcal{F}\times\mathbb{S}^1\to\C$, $a_r(z,\theta):=k(z,\exp_z(r,\theta))$. Now we want to write 
    $$\int_{B_t(p^-_z)\cap B_t(p^+_z)}\int_{\partial B_r(w)\cap B_t(p^+_z)} k(w,v)dA(v)dw=|L_t(l,r)|\pi(L_t(l,r))a_r(z,\theta)$$
for a suitable choice of measurable sets $L_t(l,r)\subset PSL_2(\R)$. This will allow us to apply Nevo's ergodic Theorem. For $w\in B_t(x_z)\cap B_t(y_z)$ we define 
$$I(w):=\{\eta\in\mathbb{S}^1\ \colon \exp_w(r,\eta)\in B_t(y_z)\}.$$

 We can choose $L_t(l,r)$ to be the set of elements in $\PSL_2(R)$ such that for any $(z,\theta)\in\Hyp\times\mathbb{S}^1$ the set $\pi(L_t(l,r)^{-1})(z,\theta)$ is 
 $$\bigcup_{w\in B_t(p^-_z)\cap B_t(p^+_z)}w\times I_w\subset \Hyp\times \mathbb{S}^1.$$

% A couple of word on why the choice of $L_t(l,r)$ does not depend on the base point $(z,\theta)$. It all boils down to the group $\PSL_2(\R)$ acting by isometries on the unit tangent bundle of $\Hyp$. In fact it is enough to define $L_t(l,r)_o$ for a base point, say $o=(i,\uparrow)$. Then notice that any point $(z,\theta)$ corresponds to a unique element $g\in G$ such that $(z,\theta)=g.o$. Then the lens space around $(z,\theta)$ is the lens space around $o$ translated by $g$. 

% In this section we show how the Hilbert-Schmidt norm of the propagator is controlled by the asymptotic dynamics of the geodesic flow.

Summarizing, 

$$\int_\mathcal{F}\int_\Hyp |K_{T,r}(x,y)|^2 dxdy\lesssim$$
$$\int_0^{2T+r}\sinh l\int_{\mathbb{S}^1}\int_\mathcal{F}\Big|\frac{1}{T}\int_{\tfrac{l-r}{2}}^T\frac{|L_t(l,r)|}{e^t}\pi(L_t(l,r))a_r(z,\theta)dt\Big|^2dzd\theta dl\le$$
$$\int_0^{2T+r}\sinh l\Big(\frac{1}{T}\int_{\tfrac{l-r}{2}}^T \frac{|L_t(l,r)|}{e^t}\big|\big|\pi(L_t(l,r)a_r\big|\big|_{L^2(X\times\mathbb{S}^1)}dt\Big)^2dl,$$
where the last line is obtained applying Minkowski integral inequality. Now we observe that $\fint_X\fint_{\mathbb{S}^1} a_r(z,\theta)d\theta dz=[k](r)=0$, so that by Theorem \ref{Nevo's ergodic theorem}
$$\big|\big|\pi(L_t(l,r))a_r\big|\big|_2\le\frac{C}{|L_t(l,r)|^{\theta(\lambda_1)}}||a_r||_2,$$
where we highlight that the coefficient $\theta(\lambda_1)$ only depends on the spectral gap $\lambda_1$.

Therefore,
$$\int_\mathcal{F}\int_\Hyp |K_{T,r}(x,y)|^2 dxdy\lesssim\frac{||a_r||_2^2}{T^2}\int_0^{2T+r}\sinh l\Big(\int_{\tfrac{l-r}{2}}^T\frac{|L_t(l,r)|^{1-\theta(\lambda_1)}}{e^t}dt\Big)^2dl.$$

Now observe that $||a_r||_2\le 2\pi\vol X||k||_\infty$ and that for any $(z,\theta)\in\Hyp\times\mathbb{S}^1$
$$|L_t(l,r)|\le |B_{t+r}(x_z)\cap B_{t+r}(y_z)|=O(e^{t+r-l/2}).$$
Therefore, for every $l\in[0,2T+r]$, 
$$\sinh l\Big(\int_{\tfrac{l-r}{2}}^T\frac{|L_t(l,r)|^{1-\theta(\lambda_1)}}{e^t}dt\Big)^2\lesssim\frac{1}{\theta(\lambda_1)^2}.$$
Hence, by writing $\rho(\lambda_1):=\theta(\lambda_1)^2$, we conclude 
$$\int_\mathcal{F}\int_\Hyp |K_{T,r}(x,y)|^2 dxdy\lesssim\frac{||k||_\infty^2}{T\rho(\lambda_1)}\vol X.$$

\end{proof}

\bibliographystyle{abbrv}
{\small\bibliography{ref}}

\end{document}